\numberwithin{equation}{section}
\newcommand{\bC}{{\mathbb C}}
\newcommand{\bP}{{\mathbb P}}
\newcommand{\bR}{{\mathbb R}}
\newcommand{\bZ}{{\mathbb Z}}
\newcommand{\CP}{\bC \bP}
\newcommand{\cF}{\mathcal F}
\newcommand{\vN}[1][\!]{\vec{N}^{\, #1}}
\newcommand{\vx}[1][\!]{\vec{x}^{\, #1}}
\newcommand{\ro}{{\mathrm o}}
\newcommand{\vu}[1][\!]{\vec{u}^{\, #1}}
\newcommand{\vv}[1][\!]{\vec{v}^{\, #1}}
\newcommand{\vepsilon}[1][\!]{\vec{\epsilon}^{\, #1}}
\renewcommand{\P}{{\mathrm P}}
\newcommand{\F}{{\mathrm F}}
\newcommand{\sH}{{\EuScript H}}
\newcommand{\sJ}{{\EuScript J}}
\newcommand{\bfD}{\mathbf{D}}
\newcommand{\bfp}{\mathbf{p}}
\newcommand{\coker}{\operatorname{coker}}
\newcommand{\Map}{\operatorname{Map}}
\newcommand{\Ob}{\operatorname{Ob}}
\newcommand{\ev}{\operatorname{ev}}
\newcommand{\id}{\operatorname{id}}
\newcommand{\Disc}{{\mathcal R}}
\newcommand{\Discbar}{\overline{\Disc}}
\newcommand{\Disct}{\tilde{\mathcal R}}
\newcommand{\Moore}{\EuScript P}
\newcommand{\Pil}{\mathcal H}
\newcommand{\Pilbar}{\overline{\Pil}}
\newcommand{\Morse}{{\mathcal T}}
\newcommand{\Cobord}{\mathcal C}
\newcommand{\Cobordbar}{\overline{\Cobord}}
\newcommand{\Wrap}{\EuScript{W}}
\newcommand{\Chord}{\EuScript X}
\newcommand{\Tw}{\mathrm{Tw}}
\newcommand{\Comp}{\mathrm{T}}
\newcommand{\Hom}{\operatorname{Hom}}
\newcommand{\Action}{\mathcal A}
\newcommand{\LAction}{\mathcal S}
\newcommand{\Pin}{Pin}
\newcommand{\Spin}{Spin}
\newcommand{\TQ}{T^{*} Q}
\newcommand{\Tq}{T^{*}_{q} Q}
\newcommand{\preG}{\operatorname{preG}}
\newcommand{\G}{\operatorname{G}}
\newcommand{\tr}{\mathrm{tran}}
\renewcommand{\dbar}{\overline{\partial}}
\def\co{\colon\thinspace}
\newtheorem{thm}{Theorem}[section]
\newtheorem{cor}[thm]{Corollary}
\newtheorem{lem}[thm]{Lemma}
\newtheorem{prop}[thm]{Proposition}
\newtheorem{defin}[thm]{Definition}
\newtheorem{def-lem}[thm]{Definition-Lemma}
\newtheorem{conj}[thm]{Conjecture}
\theoremstyle{remark}
\newtheorem{rem}[thm]{Remark}
\newcommand{\superscript}[1]{\ensuremath{^{\textrm{#1}}} }
\renewcommand{\th}[0]{\superscript{th}}
\newcommand{\st}[0]{\superscript{st}}
\newcommand{\noproof}{
\begin{flushright}
\qedsymbol
\end{flushright}}
\newcommand{\comment}[1]{}
\title[On the wrapped Fukaya category and based loops]{On the wrapped 
Fukaya category and based loops}
\author[M.~Abouzaid]{Mohammed Abouzaid} \date{October 3, 2010}
\thanks{ This research was conducted during the period the author served as a Clay Research Fellow. }
\begin{document}

\begin{abstract}
Given an exact relatively $\Pin$ Lagrangian embedding $Q \subset M$, we construct an $A_{\infty}$ restriction functor from the wrapped Fukaya category of $M$ to the category of modules on the differential graded algebra of  chains over the based loop space of $Q$.  If $M$ is the cotangent bundle of $Q$, this functor induces an $A_{\infty}$ equivalence between the wrapped Floer cohomology of a cotangent fibre and the chains over the based loop space of $Q$, extending a result proved by Abbondandolo and Schwarz at the level of homology.
\end{abstract}

\maketitle
\setcounter{tocdepth}{1}
\tableofcontents

\section{Introduction}
It has been known for a long time that the Floer theoretic invariants of cotangent bundles should be expressible in terms of classical invariants of the base.  The prototypical such result is Floer's proof  in \cite{floer-lagrangian} that the Lagrangian Floer cohomology groups of the zero section in a cotangent bundle agree with its ordinary cohomology groups.   Closer to the subject of this paper, Abbondandolo and Schwarz  proved in \cite{AS} that the wrapped Floer cohomology of a cotangent fibre is isomorphic to the homology of the based loop space.

The study of Fukaya categories in the setting of homological mirror symmetry as well as some of its applications to Lagrangian embeddings (see \cite{FSS}) requires understanding such Floer theoretic invariants at the chain level.  Building upon the results in this paper, we shall prove in \cite{fibre-generates} that the wrapped Fukaya category of a cotangent bundle is generated by a fibre, which makes it the most important object to study from the categorical point of view.  In Section \ref{sec:cotangent} we explain the proof of the following result.
\begin{thm} \label{thm:main}
If $Q$ is a closed smooth manifold, there exists an $A_{\infty}$ equivalence
\begin{equation}  \label{eq:isomorphism_floer_loops}  CW^{*}_{b}(T^*_{q}Q) \to C_{-*}(\Omega_{q}Q)  \end{equation}
between  the homology of the space of loops on $Q$ based at $q$ and the Floer cohomology of the cotangent fibre at $q$ taken as an object of the wrapped Fukaya category of $\TQ$ with background class $b \in H^{*}(\TQ, \bZ_{2})$ given by the pullback of $w_{2}( Q) \in H^*(Q, \bZ_{2})$.
\end{thm}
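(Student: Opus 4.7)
The main construction of the paper, as announced in the abstract, produces an $A_\infty$ functor $\mathcal F$ from $\Wrap(M)$ to the category of $A_\infty$ modules over $C_{-*}(\Omega_q Q)$, for any exact relatively $\Pin$ Lagrangian $Q \subset M$ equipped with the background class given by the pullback of $w_2(Q)$. The first step of the proof is to specialize this functor to $M = \TQ$, taking $Q$ as the zero section: exactness is automatic, and the background class hypothesis in the theorem statement is precisely what is needed to match the $\Pin$ requirement of the general construction. Applied to the object $L = \Tq$, this produces a distinguished $A_\infty$ module $\mathcal F(\Tq)$. Because $\mathcal F$ is an $A_\infty$ functor, it induces an $A_\infty$ algebra map from $CW^*_b(\Tq) = \Hom_{\Wrap}(\Tq, \Tq)$ into the endomorphism $A_\infty$ algebra of $\mathcal F(\Tq)$.

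The strategy is then to show both that this endomorphism algebra is quasi-isomorphic to $C_{-*}(\Omega_q Q)$ and that the composed map $CW^*_b(\Tq) \to C_{-*}(\Omega_q Q)$ is a quasi-isomorphism. For the first point, I would establish a quasi-isomorphism of $A_\infty$ modules $\mathcal F(\Tq) \simeq C_{-*}(\Omega_q Q)$, where the target is viewed as the rank-one free module over itself. The natural candidate map is built from the distinguished generator of $CW^*_b(\Tq)$ corresponding to the unique transverse intersection point $Q \cap \Tq = \{q\}$, which is a cycle and acts as a unit; the module map assigns to a chain $\sigma \in C_{-*}(\Omega_q Q)$ its action on this generator, expressed geometrically as a weighted count of pseudoholomorphic half-strips with boundary on $\Tq \cup Q$ whose $Q$-boundary, after evaluation at the marked point, realizes $\sigma$ as a loop based at $q$.

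Granting the module equivalence, the $A_\infty$ equivalence \eqref{eq:isomorphism_floer_loops} follows once one shows that the induced map on cohomology is the isomorphism of Abbondandolo--Schwarz \cite{AS}. The two constructions count essentially the same Floer-theoretic objects and differ only in how the boundary evaluation to the loop space is packaged, so the identification should be forced by a direct geometric comparison of the relevant moduli spaces, using a continuation/cascade argument to interpolate between the conformal setup of this paper and the action-functional setup of Abbondandolo and Schwarz. The main obstacle is carrying out this comparison at the chain level: one must verify that the compactifications, transversality arrangements, and orientations on the two sides match, and that the identification intertwines the Pontryagin product on $C_{-*}(\Omega_q Q)$ with the higher $A_\infty$ operations appearing in the module structure. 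Once this is in place, the Abbondandolo--Schwarz theorem immediately promotes the chain map to a quasi-isomorphism, yielding the desired $A_\infty$ equivalence.
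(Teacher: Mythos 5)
The overall architecture you propose matches the paper's: apply the functor $\cF$ of Section \ref{sec:construction-functor} with $M = T^*Q$ and $Q$ the zero section, observe that $Q \cap T^*_q Q = \{q\}$ so the twisted complex $\cF(T^*_q Q)$ is the single object $q$ with no differential and hence $\Hom_{\Tw(\Moore)}(\cF(T^*_q Q),\cF(T^*_q Q)) = C_{-*}(\Omega_q Q)$, and then show the linear term $\cF^1$ is a quasi-isomorphism by comparing with the Abbondandolo--Schwarz map $\Theta$. However, two of your proposed steps are off in ways that matter.

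First, you propose to show $\cF^1$ agrees with $\Theta^{-1}$ by a ``direct geometric comparison of the relevant moduli spaces.'' This is the wrong direction of attack: $\Theta$ and $\cF^1$ go in opposite directions, $\Theta^{-1}$ has no a priori Floer-theoretic description, and the moduli spaces defining $\Theta$ (half-strips with one Floer asymptotic and one Morse boundary condition on the loop space) are not literally the same as those defining $\cF^1$ (half-discs with boundary on $Q$ and a length-parametrised evaluation), so there is nothing to compare directly. The paper's key simplification, flagged explicitly in Remark \ref{rem:sign_error} and the remark preceding Section \ref{sec:cotangent}, is to prove only a one-sided inverse: after passing to the subcomplex of weakly transverse chains and then to the Morse complex $CM_*(\Omega^{1,2}(q,q),\LAction)$, one shows the composition $\cF^1 \circ \Theta$ is chain homotopic to the identity via the parametrised moduli spaces $\Cobord(y,z;a)$ of finite-length strips with mixed Morse/Floer boundary conditions, which degenerate to constants as $a \to 0$ and to the broken configuration computing $\cF^1 \circ \Theta$ as $a \to \infty$. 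One then invokes that $\Theta$ is already known (by \cite{AS}) to be an isomorphism, so a one-sided inverse is automatically two-sided. Attempting the other-sided homotopy by hand, or a literal moduli space matching, runs into genuine analytic obstacles, as the paper notes.

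Second, your claim that one must separately ``verify that the identification intertwines the Pontryagin product on $C_{-*}(\Omega_q Q)$ with the higher $A_\infty$ operations'' is a red herring: the compatibility of $\cF^1$ with all the $A_\infty$ operations is already built into the fact that $\{\cF^d\}$ is an $A_\infty$ functor, established in Section \ref{sec:construction-functor}. Once $\cF^1$ is shown to be a cohomology isomorphism, the $A_\infty$ equivalence follows formally; there is no additional multiplicative check to perform.
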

\begin{rem} 
 In Appendix \ref{sec:orientations},  we shall use a hybrid of the methods appearing in \cite{FOOO} and \cite{seidel-book} in order to define the coherent orientations of moduli spaces of holomorphic discs needed to prove this result.   In Lemma \ref{lem:twisted_to_untwisted_sign_difference}, we shall prove that the contribution of each holomorphic disc to an operation on Floer cohomology for background class $b$ differs from the corresponding contribution for the trivial background class by a sign equal to the intersection number with an appropriate cycle Poincar\'e dual to $b$.  This proves that if we consider  Floer cochains for the trivial background class, the count of holomorphic curves that we define also produces an $A_{\infty}$ equivalence
 \begin{equation}
   \label{eq:iso_twisted_loops}
   CW^{*}(T^*_{q}Q) \to C_{-*}(\Omega_{q}Q; \kappa) 
 \end{equation}
where $\kappa$ is the $\bZ$-local system on the based loop space which is  uniquely determined up to isomorphism by the property that the monodromy around a loop in $\Omega_{q} Q$ is given by the evaluation of $w_{2}(Q)$ on the corresponding torus.
\end{rem}
\begin{rem}
 Theorem \ref{thm:main} relies on the fact that a certain map we shall construct inverts an isomorphism constructed by Abbondandolo and Schwarz in \cite{AS0}.  In Summer 2009, the author was informed by Schwarz that he, together with Abbondandolo, can prove that these maps are indeed inverses and planned to write it out in an upcoming paper, leading the author to write down a proof that the maps are right inverses (the proof is sketched in Section \ref{sec:cotangent}).  An interesting analytic problem seems to arise when trying to construct by hand the homotopy that would independently prove that the maps are also left inverses, but this is of course unnecessary as the right inverse to an isomorphism is also a left inverse.
\end{rem}
\begin{rem}\label{rem:sign_error}
The original version of this paper claimed a result for general $Q$, but in fact assumed it to be $\Spin$ in the proof, and did not specify that, in the non-$\Pin$ case, the cotangent fibre should be considered as an object of a wrapped Fukaya category with a non-trivial background class  (see Section \ref{sec:at-level-objects} and Appendix \ref{sec:orientations} for a discussion of this modification of the usual Fukaya category).  In the case of non-$\Pin$ manifolds, the signed contribution of  moduli spaces of holomorphic half-strips had not been analysed.  The fact that the symplectic literature failed to account for these necessary signs was revealed, in the setting of generating functions, by work of Kragh \cite{kragh} in his generalisation of Viterbo's restriction map, and verified in a computation of the symplectic cohomology of $T^* \CP^{2}$ by Seidel in \cite{seidel-CP2} relying on Floer-theoretic methods largely independent from the ones used here.  
\end{rem}
Theorem \ref{thm:main} is a relatively straightforward application of the main construction of this paper (using as well as Abbondandolo and Schwarz's main result from \cite{AS0} at the level of homology) which concerns the wrapped Fukaya category $\Wrap(M)$ of a Liouville manifold $M$.   Recall that a Liouville manifold is an exact symplectic manifold which may be equipped with an end modelled after the positive half of the symplectisation of a contact manifold.  The wrapped Fukaya category of such a manifold has as objects exact Lagrangians modelled after Legendrians along this cylindrical end.  Such a category was defined in \cite{abouzaid-seidel}, but we shall give a different construction in Section \ref{sec:revi-wrapp-fukaya}.   In real dimension $2$, Liouville manifolds correspond to symplectic structures on punctured surfaces; a choice of Liouville form fixes a symplectomorphism from a neighbourhood of each puncture to $[1,+\infty) \times S^1$  and the collection of Lagrangians we consider agree with radial lines in these coordinates (see Figure \ref{fig:exact}).

\begin{figure} \label{fig:exact}
  \centering
  \caption{}
  \includegraphics{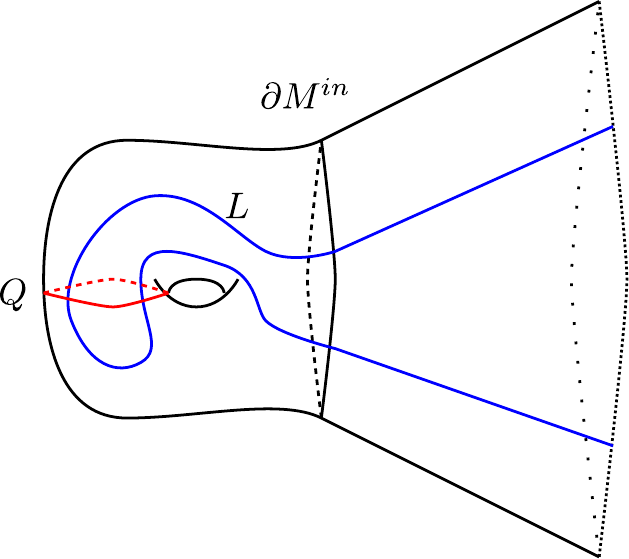}
\end{figure}

 Let us now assume that a closed manifold $Q$  embeds as an exact Lagrangian in $M$, and that there exists a class $b \in H^{2}(M, \bZ_{2})$ whose restriction to $Q$ agrees with the  second Stiefel-Whitney class $w_{2}(Q)$; we say that such a Lagrangian is \emph{relatively $\Pin$} for the background class $b$.

We associate to $Q$ a category $\Moore(Q)$ whose objects are points of $Q$ and morphisms are chains on the spaces of paths between such points.  The endomorphism algebra of any object in this category is $C_{-*}(\Omega_{q}Q)$, which can be made into a differential graded algebra by using normalised cubical chains as explained in the next section.  Given an exact Lagrangian submanifold  $L$ whose second Stiefel-Whitney class is also given by the restriction of $b$, we construct an explicit module $\cF(L)$ over the category $\Moore(Q)$, in fact a twisted complex, using the moduli spaces of holomorphic strips bounded on one side by $Q$ and on the other by $L$.    With minor technical differences coming from choices of basepoints, such a module was constructed by Barraud and Cornea in \cite{BC}. We prove that this assignment extends to an $A_{\infty}$ functor
\begin{equation}  \Wrap_{b}(M) \to \Tw(\Moore(Q)), \end{equation}
where the left hand side is the wrapped Fukaya category with respect to the background class $b$, and the right hand side is the category of twisted complexes.  Applying this functor to $M = T^*Q$ and $L= T^*_{q}Q$ yields Theorem \ref{thm:main}.  Moreover, using Theorem \ref{thm:main} we conclude
\begin{cor} \label{cor:restriction}
If $Q$ is an exact relatively $\Pin$ Lagrangian embedded in $M$, there is an $A_{\infty}$ restriction functor
\begin{equation} \Wrap_{b}(M) \to \Tw( \Wrap_{b}(T^*Q) ) .\end{equation}
\end{cor}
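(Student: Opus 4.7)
The plan is to exhibit the desired functor as the composition of the paper's main construction $\cF: \Wrap_b(M) \to \Tw(\Moore(Q))$ with an $A_\infty$ functor $\Psi: \Tw(\Moore(Q)) \to \Tw(\Wrap_b(T^*Q))$ derived from Theorem \ref{thm:main}. The intuition is that Theorem \ref{thm:main} identifies the endomorphism algebra of a cotangent fibre in $\Wrap_b(T^*Q)$ with the endomorphism algebra of a basepoint in $\Moore(Q)$, so inverting this identification converts the $\Moore(Q)$-module that $\cF$ assigns to any Lagrangian $L \subset M$ into a twisted complex built from cotangent fibres.

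To construct $\Psi$, assume first that $Q$ is connected and choose a basepoint $q \in Q$. The full $A_\infty$ subcategory of $\Moore(Q)$ on $\{q\}$ is the dga $C_{-*}(\Omega_q Q)$, and the inclusion $\{q\} \hookrightarrow \Moore(Q)$ is a quasi-equivalence, since any other point may be joined to $q$ by a path yielding a cohomological isomorphism. Taking twisted complexes, the induced functor $\Tw(C_{-*}(\Omega_q Q)) \hookrightarrow \Tw(\Moore(Q))$ is an equivalence and admits a quasi-inverse. Theorem \ref{thm:main} then supplies a quasi-isomorphism of $A_\infty$ algebras $C_{-*}(\Omega_q Q) \simeq CW^*_b(T^*_q Q)$, and the latter is the endomorphism algebra of $T^*_q Q$ in $\Wrap_b(T^*Q)$. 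Composing the above quasi-inverse with the Yoneda functor associated with $T^*_q Q$ and the tautological embedding $\Wrap_b(T^*Q) \hookrightarrow \Tw(\Wrap_b(T^*Q))$ then produces $\Psi$. For disconnected $Q$, assemble contributions component by component by choosing one basepoint in each component.

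The principal technical obstacle is the construction of the quasi-inverse to $\Tw(C_{-*}(\Omega_q Q)) \hookrightarrow \Tw(\Moore(Q))$ and its coherent composition with the other equivalences, which is a standard but delicate exercise in $A_\infty$ homological algebra. A more direct route, which I would favour, is via $A_\infty$ bimodules: interpret $\cF$ as a $(\Wrap_b(M), \Moore(Q))$-bimodule $\mathcal{P}$, use Theorem \ref{thm:main} to exhibit a Yoneda-style $(\Moore(Q), \Wrap_b(T^*Q))$-bimodule $\mathcal{Q}$ whose left and right actions recover $C_{-*}(\Omega_q Q)$ and $CW^*_b(T^*_q Q)$ respectively, and then take the convolution $\mathcal{P} \otimes_{\Moore(Q)} \mathcal{Q}$ as the bimodule corresponding to the corollary's functor. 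This bypasses the explicit inversion of $A_\infty$ quasi-equivalences in favour of the cleaner machinery of tensor products of bimodules.
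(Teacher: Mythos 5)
Your primary argument is essentially the same as the paper's: the paper also inverts (it says ``splits'') the equivalence of Theorem~\ref{thm:main} to obtain $\Tw\Moore(Q) \to \Tw(CW^{*}_{b}(T^*_{q}Q))$, views the target as twisted complexes built from $T^*_q Q$ inside $\Tw(\Wrap_b(T^*Q))$, and precomposes with $\cF$. Your closing remark about the bimodule/convolution route is a genuinely different and arguably cleaner implementation of the same idea, but it is offered as an aside and the proof you actually carry out coincides with the paper's.
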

\begin{proof}
We may split the map \eqref{eq:isomorphism_floer_loops} to obtain an $A_{\infty}$ equivalence
\begin{equation} \Tw \Moore(Q) \to  \Tw(   CW^{*}_{b}(T^*_{q}Q) ).  \end{equation}
The left hand side may be thought of as twisted complexes on $\Wrap_{b}(T^*Q)$ which are built using only the object $ T^*_{q}Q $.  Composing this equivalence with the inclusion of the right hand side in $\Tw( \Wrap_{b} (T^*Q) )$ and the functor $\cF$, we obtain the desired result.
\end{proof}

The reader may want to compare this circuitous construction of a restriction map with the one defined in \cite{abouzaid-seidel} for an inclusion $M^{in} \subset M$ of a Liouville subdomain.  The key differences are that the construction in \cite{abouzaid-seidel} can only be performed for Lagrangians which intersect the boundary of  $M^{in}$ in a very controlled way, and that for such Lagrangians the image of the restriction functor lies in  $\Wrap(M^{in})$ (rather than twisted complexes thereon).  Corollary \ref{cor:restriction} leads us to make the following conjecture:
\begin{conj}
Any inclusion of a Liouville subdomain $M^{in} \subset M$ induces an $A_{\infty}$ restriction functor
\begin{equation} \Wrap(M) \to  \Tw( \Wrap(M^{in}) ) . \end{equation}
\end{conj}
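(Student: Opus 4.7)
The plan is to build, for each object $L$ of $\Wrap(M)$, an $A_{\infty}$ module $\cF(L)$ over $\Wrap(M^{in})$ in the spirit of the functor $\Wrap_{b}(M) \to \Tw(\Moore(Q))$ constructed in this paper, with the single Lagrangian $Q$ replaced by the entire collection of objects of $\Wrap(M^{in})$. The value of $\cF(L)$ on an object $L^{in}$ of $\Wrap(M^{in})$ should be a mixed wrapped Floer complex $CW^{*}_{M^{in},M}(L^{in}, L)$ computed in $M$, in which $L^{in}$ is wrapped only by the flow of a Hamiltonian supported near $M^{in}$ (and linear at $\partial M^{in}$), while $L$ is wrapped by a Hamiltonian supported near infinity of $M$. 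The module structure maps are then defined by moduli spaces of holomorphic discs in $M$ with one boundary marked point on $L$, several inputs on Lagrangians $L^{in}_{0}, \dots, L^{in}_{k} \subset M^{in}$, and Floer data consistently interpolating between the two wrapping regimes.

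With this assignment in place, one would define the $A_{\infty}$ functor $\Wrap(M) \to \Tw(\Wrap(M^{in}))$ by considering higher moduli spaces of discs carrying multiple inputs on Lagrangians in $\Wrap(M)$ as well as the module inputs from $\Wrap(M^{in})$, exactly as in the construction of the functor to $\Tw(\Moore(Q))$. The $A_{\infty}$ relations follow from the usual codimension-$1$ boundary analysis of one-dimensional moduli spaces, whose two natural strata correspond either to breaking of strips inside $M^{in}$ (producing compositions in $\Wrap(M^{in})$) or to breaking near infinity of $M$ (producing the module differential on $\cF(L)$).

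The hard part is establishing the requisite compactness and transversality in the presence of two distinct wrapping scales. In addition to the integrated maximum principle that prevents escape to infinity of $M$, one needs analogous control for holomorphic curves drifting across the hypersurface $\partial M^{in}$: this is naturally handled by choosing almost complex structures which are cylindrical both near infinity of $M$ and near $\partial M^{in}$, combined with an SFT-type neck stretching analysis along $\partial M^{in}$. An extra subtlety arises when some $L^{in}_{i}$ has Legendrian ends on $\partial M^{in}$, since the interpolation between the two Hamiltonian perturbations must then be chosen consistently with these ends so as not to produce spurious codimension-$1$ strata not accounted for by the expected $A_{\infty}$ structure.

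An alternative route is to treat those Lagrangians in $\Wrap(M)$ that intersect $\partial M^{in}$ transversally in a controlled fashion, as in \cite{abouzaid-seidel}, as a base case, and then extend to arbitrary objects via a generation or deformation argument. Such a reduction would depend on knowing that these ``admissible'' Lagrangians generate $\Wrap(M)$ in an appropriate sense, which is currently only available in restricted settings, so the direct moduli-space construction outlined above seems the more robust path toward a general proof.
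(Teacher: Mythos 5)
The statement you address is labeled as a conjecture in the paper and is not proved there: the paper only establishes the special case where $M^{in}$ is a Weinstein neighbourhood of a compact exact Lagrangian $Q$ (Corollary \ref{cor:restriction}), and even that case goes through the Pontryagin category $\Moore(Q)$ and Theorem \ref{thm:main}, not through a direct moduli-space construction in $M^{in}$. There is therefore no proof in the paper to compare against, and your proposal should be read as a plan of attack for an open problem rather than as a verification of an established argument.

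As such it is reasonable, but it stops well short of a proof, and one structural point deserves emphasis before the analytical issues you raise. The paper's functor $\cF$ works because $Q$ is compact: the outgoing boundary arc of each half-disc lies on $Q$ and is evaluated, by arc length, into the path space $\Omega(q_0,q_d)$, producing a cubical chain in $\Moore(Q)$; the inductive construction of compatible fundamental chains (Lemmas \ref{lem:fundamental_chains_linear_map} and \ref{lem:good_choice_fundamental}) depends on this evaluation target being an honest topological space. When $Q$ is replaced by the non-compact objects of $\Wrap(M^{in})$, there is no such evaluation map, and your ``mixed wrapped Floer complex'' $CW^{*}_{M^{in},M}(L^{in},L)$ has to be built directly as an $A_\infty$-bimodule by counting rigid discs. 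That is a genuinely different mechanism, closer to the construction of \cite{abouzaid-seidel} than to $\cF$, and the central difficulty you flag---preventing curves from escaping through $\partial M^{in}$ when the two wrapping scales are incompatible and $L$ meets $\partial M^{in}$ in an uncontrolled way---is precisely what Condition \eqref{eq:boundary_Legendrian} and the maximum-principle argument of Lemma \ref{lem:gromov_compactness} finesse in the single-scale case. A neck-stretching analysis along $\partial M^{in}$ produces broken configurations with Reeb chords of $\partial M^{in}$ as asymptotics, and you do not say how these are to be excluded or incorporated into the $A_\infty$ relations; your alternative route via admissible Lagrangians and generation is, as you concede, unavailable in general. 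Until one of these analyses is carried out, the statement remains the conjecture the paper says it is.
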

It is important to note that this result cannot hold if we do not pass to twisted complexes on the right hand side (or derived categories if the reader prefers that language).

\subsection*{Acknowledgments}
I would like to thank Ivan Smith for the invitation to visit Cambridge University in the Winter of 2008, when we discussed wrapped Fukaya categories in various setting, and Ralph Cohen for the invitation to lecture at the March 2008 Stanford University workshop on String Topology, where I presented a preliminary version of these results.  Conversations, electronic and otherwise, with Ralph Cohen and Andrew Blumberg as well as Peter Albers and Dietmar Salamon helped, respectively on the topological and symplectic parts, to clarify the technical problems to be circumvented.   Finally, I would like to thank Thomas Kragh and Paul Seidel for discussions about the sign error mentioned in Remark \ref{rem:sign_error}.

\subsection*{Conventions}
For the parts of the paper dealing with the homological algebra of $A_{\infty}$ categories, we shall mostly use results which appear in the first part of  \cite{seidel-book}.   In particular, our sign conventions for orienting moduli spaces of holomorphic discs are the same as those appearing in \cite{seidel-book}, as well as in \cite{abouzaid-seidel}
\section{Construction of the functor at the level of objects}
\subsection{The Pontryagin category}
Let $Q$ be any path connected topological space.  Consider the topological category with objects the points of $Q$, and morphisms from  $q^0$ to $q^1$ given by the Moore path space
\begin{equation} \Omega(q^0,q^1)  \equiv  \{ \gamma \co [0,R] \to Q | \gamma(0) = q^0, \gamma(R) = q^1 \} \end{equation}
where $R$ is allowed to vary between $0$ and infinity.  The composition law is given by concatenating the domains and the maps:
\begin{align*} \Omega(q^0,q') \times  \Omega(q',q^1)  & \to  \Omega(q^0,q^1) \\
(\gamma_1, \gamma_2) & \to \gamma_1 \cdot \gamma_2 (l) \equiv \begin{cases} \gamma_1(l) & \textrm{ if } 0 \leq l \leq R_1 \\
\gamma_2( l -R_1)  & \textrm{ if } R_1 \leq l \leq R_1 + R_2 \end{cases}
 \end{align*}
where $\gamma_i$ is assumed to have domain $[0,R_i]$.

It is well known that this formula defines an associative composition of paths.   In order for this operation to induced the structure of a differential graded algebra on chains, we use normalised cubical chains throughout this paper.  Recall, for example from \cite{massey}, that a map from a cube to a topological space is said to be degenerate if it factors through the projection to a factor.  The graded abelian groups underlying the normalised chain complex are
\begin{align*}
 C_{i}(X) & = \frac{\bZ\left[ \Map([0,1]^{i}, X) )   \right] }{\bZ \left[\textrm{degenerate maps}\right]}
\end{align*}
Writing $\delta_{k,\epsilon}$ is the inclusion of the face where the $k$\th coordinate is constant and equal to $\epsilon$, we define a differential by the formula 
\begin{equation*} 
  \partial \sigma =  \sum_{k=1}^{i} \sum_{\epsilon = 0,1}   \partial_{k,\epsilon} \sigma =  \sum_{k=1}^{i} \sum_{\epsilon = 0,1}   (-1)^{k + \epsilon}  \sigma \circ \delta_{k,\epsilon}.
\end{equation*}

The key difference with the theory based on simplices is that a product of cubes is again a cube, so it is easy to define a map
\begin{equation} C_{*}(X) \times C_{*}(Y)  \to  C_{*}(X \times Y) \end{equation}
which may easily be checked to be associative in the appropriate sense.  Applying this to path spaces, we obtain a differential graded category. In order to be consistent with our sign conventions for the Fukaya category, we denote by
\begin{equation} \Moore(Q) \end{equation}
the $A_{\infty}$ category with objects points of $Q$, morphism spaces
\begin{equation} \Hom_{*}(q^0, q^1) =  C_{-*}( \Omega(q^0,q^1) ) \end{equation}
and differential and product  
\begin{align} \label{eq:signs_a_infty_cubical} 
\mu^{\P}_1 \sigma & \equiv  \partial \sigma \\
\mu_{2}^{\P}(\sigma_{2}, \sigma_{1}) & \equiv (-1)^{\deg \sigma_1} \sigma_{1} \cdot \sigma_{2}.
\end{align}

Note that the path-connectivity assumption on $Q$ implies that all objects of this category are quasi-isomorphic.  In particular, the inclusion of any object defines a fully faithful $A_{\infty}$ embedding
\begin{equation} C_{-*}(\Omega(q,q)) \to \Moore(Q) . \end{equation}

We shall need to consider an enlargement of   $\Moore(Q)$ to a triangulated $A_{\infty}$ category.  The canonical such enlargement is the triangulated closure of  the image of $\Moore(Q)$ in its category of modules under the Yoneda embedding.  In practice, it shall be convenient to use the more explicit model of twisted complexes introduced by Bondal and Kapranov in  \cite{bondal-kapranov}.  First, we enlarge $\Moore(Q)$ by allowing shifts of all objects (by arbitrary integers) and define
\begin{equation} \Hom_{*}(q^0[m_0], q^1[m_1]) \equiv \Hom_{*}(q^0,q^1)[m_1- m_0]  \end{equation}
with differential $\mu_{1}^{\P}$.  Given a triple $(q^0[m_0],q^1[m_1],q^1[m_2])$, and morphisms $\sigma_{i} \in \Hom_{*}(q^{i-1},q^{i})$  multiplication is defined via
\begin{equation} \label{eq:sign_additive_enlargement}   \mu^{\P}_{2}(\sigma_2 [m_2 -m_1]  ,\sigma_1 [m_1 -m_0]) = (-1)^{ (\deg(\sigma_2)+1)(m_1 -m_0)}   \mu^{\P}_{2} ( \sigma_2, \sigma_1) [m_2 - m_0]. \end{equation} 
\begin{defin}
A \emph{ twisted complex} consists of the following data
\begin{equation} \label{eq:twisted_complex_equation}
\begin{array}{c} 
\parbox{35em}{A finite collection of objects $\{ q^{i} \}_{i=1}^{r}$ and integers $m_i$, together with a collection of morphisms $\{ \delta_{i,j} \}_{i<j}$ of degree $1$ in $ \Hom_{*}(q^i[m_i], q^j[m_j]) $, such that} \\
\displaystyle{ \mu^{\P}_{1}\delta_{i,j} + \sum_{k} \mu^{\P}_{2}( \delta_{k,j} , \delta_{i,k}) = 0}
\end{array}
\end{equation}
\end{defin}
We write $D$ for the matrix of morphisms  $\{ \delta_{i,j} \}_{i<j}$, and $\Comp = ( \oplus  q^{i}[m_i], D)$ for such a twisted complex, and note that the equation imposed on $\delta_{i,j}$ can be conveniently encoded as
\begin{equation} \mu^{\P}_{1}( D) + \mu^{\P}_{2}( D,D) = 0 .\end{equation}
Given two such complexes $\Comp^1 = ( \oplus  q_{1}^{i}[m^1_i], D^1)$ and $\Comp^2= ( \oplus  q_{2}^{i}[m^2_i], D^2)$, we define the space of morphisms between them as a direct sum
\begin{equation} \Hom_{*}(\Comp^1, \Comp^2) \equiv \bigoplus_{i_1,i_2} C_{*}(\Omega_{q_1^{i_1}, q_{2}^{i_2}})[m^1_{i_1} - m^2_{i_2}]. \end{equation}
The differential of an element $S = \{ \sigma_{i_1,i_2} \}$ in this space is given for an elementary matrix by
\begin{equation}  \label{eq:differential_twisted} \sigma_{i_1,i_2} \mapsto \mu_1^{\P} \sigma_{i_1,i_2} +  \sum_{k < i_1} \mu^{\P}_{2} (\sigma_{i_1,i_2} , \delta^{1}_{k,i_1}) +  \sum_{i_2 <k}  \mu^{\P}_{2} (\delta^{2}_{i_2,k} , \sigma_{i_1,i_2}), \end{equation}
which can be written much more clearly in terms of matrix multiplication 
\begin{equation}\mu^{\Tw(\Moore)}_{1} S =  \mu_1^{\P} S + \mu_{2}^{\P}(S,D^{1})  + \mu_{2}^{\P}(D^{2},S). \end{equation}

Composition is also most clearly defined in terms of matrix multiplication
\begin{align*} 
\mu_{2}^{\Tw(\Moore)} (S_2, S_1) & \equiv \mu_{2}^{\P}(S_2,S_1).
 \end{align*} 

The following result is essentially due to  Bondal and Kapranov in  \cite{bondal-kapranov}:
\begin{lem}
Twisted complexes form a triangulated $A_{\infty}$ category. \noproof
\end{lem}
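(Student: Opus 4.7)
The plan is to verify the statement in two stages, both following the Bondal--Kapranov framework, with careful attention to the sign conventions in \eqref{eq:signs_a_infty_cubical} and \eqref{eq:sign_additive_enlargement}.

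For the $A_{\infty}$ structure, I would set $\mu_n^{\Tw(\Moore)} = 0$ for $n \geq 3$, which is consistent because $\Moore(Q)$ is a genuine differential graded category (no higher operations need to be corrected). The main task is then to verify $(\mu_1^{\Tw(\Moore)})^2 = 0$ on a morphism $S \in \Hom_*(\Comp^1, \Comp^2)$ by expanding $\mu_1^{\Tw(\Moore)}\bigl(\mu_1^\P S + \mu_2^\P(S, D^1) + \mu_2^\P(D^2, S)\bigr)$. The resulting terms group into three types: the purely $\mu_1^\P$ terms, which cancel since $(\mu_1^\P)^2 = 0$; the mixed terms involving one $D^i$, which cancel by the Leibniz rule relating $\mu_1^\P$ and $\mu_2^\P$ in $\Moore(Q)$; and the quadratic terms in $D^1$ and $D^2$, which assemble using strict associativity of Moore path concatenation into $\mu_2^\P\bigl(S, \mu_1^\P D^1 + \mu_2^\P(D^1, D^1)\bigr)$ and $\mu_2^\P\bigl(\mu_1^\P D^2 + \mu_2^\P(D^2, D^2), S\bigr)$, each of which vanishes by the Maurer--Cartan equation $\mu_1^\P(D^i) + \mu_2^\P(D^i, D^i) = 0$. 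The Leibniz compatibility between $\mu_1^{\Tw(\Moore)}$ and $\mu_2^{\Tw(\Moore)}$ follows from an analogous expansion and the Leibniz rule in $\Moore(Q)$, while strict associativity of $\mu_2^{\Tw(\Moore)}$ is immediate from strict associativity of $\mu_2^\P$.

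For the triangulated structure, in the sense of closure under shifts and mapping cones, I would exhibit both operations explicitly within $\Tw(\Moore(Q))$. Shifts are already present, since the definition permits arbitrary grading shifts $q^i[m_i]$ of objects and \eqref{eq:sign_additive_enlargement} specifies how composition interacts with shift. Given a closed degree-zero morphism $f \in \Hom_0(\Comp^1, \Comp^2)$ with $\mu_1^{\Tw(\Moore)} f = 0$, the mapping cone is realised as the twisted complex whose underlying object is the disjoint union of the objects of $\Comp^1[1]$ and $\Comp^2$, with connecting matrix having $D^1$ (shifted) and $D^2$ along the block diagonal and $f$ as the off-diagonal entry from $\Comp^1[1]$ to $\Comp^2$. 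The Maurer--Cartan equation for this block matrix reduces to the closedness condition $\mu_1^{\Tw(\Moore)} f = 0$ together with the Maurer--Cartan equations for $D^1$ and $D^2$, so this is a bona fide object of $\Tw(\Moore(Q))$. The standard candidate triangle $\Comp^1 \to \Comp^2 \to \mathrm{Cone}(f) \to \Comp^1[1]$ is then exact by an explicit argument at the level of twisted complexes.

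The main obstacle is purely notational: one must carefully propagate the sign in \eqref{eq:sign_additive_enlargement} (which activates whenever a morphism crosses a shifted object) together with the order-reversing sign in \eqref{eq:signs_a_infty_cubical} through every cancellation above. In particular, the cross term $\mu_2^\P\bigl(\mu_2^\P(D^2, S), D^1\bigr) - \mu_2^\P\bigl(D^2, \mu_2^\P(S, D^1)\bigr)$ appearing in the computation of $(\mu_1^{\Tw(\Moore)})^2 S$ must vanish identically from strict associativity of concatenation, and the regrouped quadratic terms in $D^i$ must appear with signs matching the Maurer--Cartan relation exactly. Since $\Moore(Q)$ is honestly dg and concatenation is strictly associative, no genuinely $A_{\infty}$ input is required, and the argument is a direct verification of the Bondal--Kapranov construction of \cite{bondal-kapranov} in the present sign conventions.
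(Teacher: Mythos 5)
The paper offers no proof of this lemma: it is marked with a ``no proof'' symbol and attributed to Bondal--Kapranov, so there is no argument in the text to compare yours against. What you have written is the standard verification specialised to the case where the base category is differential graded, and it is correct in approach: since $\Moore(Q)$ has $\mu^{\P}_{d}=0$ for $d\geq 3$, the general Bondal--Kapranov formula for $\mu^{\Tw}_{d}$ (which inserts $D$'s into the slots of $\mu^{\P}_{d}$) collapses to give $\mu^{\Tw}_{d}=0$ for $d\geq 3$, and the $A_{\infty}$ relations reduce to $(\mu^{\Tw}_{1})^{2}=0$, the Leibniz rule, and associativity up to sign, all of which you reduce to the Maurer--Cartan equation for $D$ and the dg structure on $\Moore(Q)$. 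The triangulated part (shifts built in by construction, cones via the block-triangular Maurer--Cartan element) is likewise the standard argument.

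One sign remark worth flagging: you assert that the cross term $\mu_2^\P(\mu_2^\P(D^2, S), D^1) - \mu_2^\P(D^2, \mu_2^\P(S, D^1))$ vanishes by ``strict associativity of concatenation.'' In the paper's conventions \eqref{eq:signs_a_infty_cubical}, $\mu_2^\P$ is concatenation twisted by a degree-dependent sign, so it is \emph{not} strictly associative; what one actually has is the $A_\infty$ associativity relation $\mu_2^\P(a_3,\mu_2^\P(a_2,a_1)) + (-1)^{\|a_1\|}\mu_2^\P(\mu_2^\P(a_3,a_2),a_1)=0$, and since $D^1$ has degree $1$, hence reduced degree $\|D^1\|=0$, both cross terms appear in $(\mu_1^{\Tw})^2 S$ with the same sign and cancel as a \emph{sum}, not a difference. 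This does not affect the correctness of your overall argument -- you correctly identify sign-propagation as the remaining bookkeeping -- but the slogan ``strict associativity'' should be replaced by ``associativity of concatenation together with the sign twist in \eqref{eq:signs_a_infty_cubical}'' to make the cancellation land on the correct side of the equation.
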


\subsection{Assigning a twisted complex to each Lagrangian} \label{sec:at-level-objects}
Let $M$ be a  symplectic manifold equipped with a $1$-form $\lambda$ whose differential $\omega$ is a symplectic form.  Assume the existence of a compact (codimension $0$) submanifold $M^{in}$ with boundary such that the restriction of $\lambda$ to $\partial M^{in} $ is a contact form.  We say that $M$ is a Liouville manifold if we have a decomposition
\begin{equation*}
M = M^{in} \cup_{\partial M^{in}}  [1, +\infty) \times \partial M^{in},
\end{equation*}
such that the Liouville form is given by $\lambda = r (\lambda| \partial M) $  on the infinite end $   [1, +\infty) \times \partial M^{in} $, where $r$ is the coordinate on $[1,+\infty) $. 

Instead of studying all Lagrangians in $M$, we consider  a \emph{closed} exact Lagrangian $Q \subset M$, together with a finite collection $\Ob(\Wrap_{b}(M))$ of exact properly embedded Lagrangians such that 
\begin{equation} \label{eq:boundary_Legendrian} \parbox{36em}{$\lambda$ vanishes on $L \cap \partial M^{in} \times [1,+\infty)$ if $L \in \Ob(\Wrap_{b}(M))$, and $Q$ intersects each Lagrangian in $\Ob(\Wrap_{b}(M))$ transversely.} \end{equation}
 The first condition is equivalent to the requirement that the intersection $\partial L$  of $L$ with $\partial M^{in}$ be Legendrian, and that $L$ be obtained by attaching an infinite cylindrical end to the intersection of $L$ with $M^{in}$
\begin{equation*}  L  = L^{in} \cup_{\partial L^{in}}  [1, +\infty) \times \partial L^{in}. \end{equation*} 
We shall choose a primitive $f_{L}$ for the restriction of $\lambda$ to each such Lagrangian, which by the above conditions is locally constant away from a compact set, as well as a primitive $f_{Q}$ for the restriction of $\lambda$ to $Q$.  The exactness condition excludes bubbling of holomorphic discs; a general Lagrangian Floer theory is developed by Fukaya, Oh, Ohta and Ono in \cite{FOOO}, but a wrapped version of their theory has not been worked out.  It is also likely that the theory works under weaker assumptions on the properties of $L$ at infinity, but these would require more delicate estimates on the behaviour of solutions to the perturbed $\dbar$ equations we shall study, which discourages us from pursuing this generalisation.

With the condition imposed above, we may define a $\bZ_{2}$-graded Fukaya category over a field of characteristic $2$.  To obtain $\bZ$-gradings and work over the integers, we assume that for each $L \in  \Ob(\Wrap_{b}(M))$ or for $L=Q$,  
\begin{equation} \label{eq:relative_pin_graded} \parbox{36em}{the restriction of $b$ to $L$ agrees with the second Stiefel-Whitney class $w_{2}(L)$.  Moreover,  the relative first Chern class $2 c_{1}(M,L)$ vanishes on  $ H_{2}(M,L)$.} \end{equation}
The condition on $  c_{1}(M,L) $ incorporates the requirement that $M$ have a quadratic complex volume form when equipped with any compatible almost complex structure,  and that the Maslov index on $H_{1}(L)$ vanish.    This class is well defined because the space of almost complex structures is contractible.  Since $L$ is Lagrangian, we may evaluate such a volume form  at every point $p \in L$ to obtain an element of $\bC^{*}$. The vanishing of the Maslov index on  $ H_{1}(L) $ implies that this function $L \to \bC^{*}$ may be factored through the universal cover of $ \bC^{*} $.   The choice of such a lifting is called a \emph{grading} on $L$ (see Section 12 of \cite{seidel-book}). 

On the other hand, the condition that $b$ agree, when restricted to $L$, with $w_{2}(L)$ allows us to choose a relative $\Pin$ structure on $Q$ as well as on the objects of $\Wrap_{b}(M)$.  Instead of following the approach using non-abelian cohomology (see  e.g. Section (11i) of \cite{seidel-book} and \cite{WW}), we shall adopt a variant of the method used in Chapter 9 of \cite{FOOO} for orienting moduli spaces of holomorphic curves.  Namely, we fix a triangulation of $M$ so that $Q$ and the elements of $ \Ob(\Wrap_{b}(M)) $ are subcomplexes and fix an orientable vector bundle $E_{b}$ on the $3$-skeleton of this triangulation whose second Stiefel-Whitney class is the restriction of $b$ to the $3$-skeleton. 

 Condition \eqref{eq:relative_pin_graded}, together with the fact that $E_{b}$  is orientable, implies that 
 \begin{equation}
  w_{2}( TL |L_{[3]} \oplus E_{b} | L_{[3]} ) = 0
 \end{equation}
where $L_{[3]}$ stands for the $3$-skeleton of $L$ with respect to the triangulation induced as a subcomplex of $M$.  We refer the reader to Section (11i) of \cite{seidel-book} for a discussion of the double cover of the orthogonal group called $\Pin$, and the fact that the second Stiefel-Whitney class is precisely the obstruction to the existence of $\Pin$ structures:
\begin{defin}
 A relative $\Pin$ structure on $L$ is the choice of a $\Pin$ structure on the vector bundle $ TL|L_{[3]} \oplus E_{b} | L_{[3]}  $ defined over the $3$-skeleton of $L$.
\end{defin}
\begin{defin} \label{def:brane}
A \emph{brane structure} on a Lagrangian is the choice of a grading, together with a relative $\Pin$ structure. 
\end{defin}
Let us fix once and for all such a structure on $Q$, as well as on each Lagrangian $L \in  \Ob(\Wrap_{b}(M))$, which is precisely the data needed to unambiguously assign gradings to Floer groups and signs to operations thereon.   In the next section, we shall review the construction of a category we call the wrapped Fukaya category, whose objects are $  \Ob(\Wrap_{b}(M)) $.

To each object $L$ of the wrapped Fukaya category, we assign a twisted complex over the Pontryagin category of $Q$ as follows:   We write $\{ q^{i} \}_{i=1}^{m}$  for the set of intersection points between $Q$ and $L$, which we assume are ordered by action, i.e. the difference between $f_Q$ and $f_L$.  Moreover, we write $\sJ(M)$ for the space of almost complex structures on $M$ which are compatible with the symplectic form on $M$, and such that
\begin{equation*}
  \lambda \circ J = dr
\end{equation*}
on the cylindrical end.  This is a mild version of the contact-type property often imposed on almost complex structures on symplectic manifolds with contact boundary.

Fixing a family $I_{t} \in \sJ(M)$ of such almost complex structures parametrised by $t \in [0,1]$, we consider the moduli spaces
\begin{equation} \Pil(q^i, q^j) \end{equation}
which are the quotients by the $\bR$ action of the space of solutions to the time-dependent $\dbar$-equation
\begin{equation}  \label{eq:dbar_no_X}
    du(s,t) \circ j - I_{t} \circ du(s,t) = 0
\end{equation}
with boundary conditions
\begin{equation} \label{eq:holomorphic_strip}
\left\{
\begin{aligned}
 & u: Z = \bR \times [0,1] \longrightarrow M, \\
 & u \left(\bR \times \{ 1 \}  \right) \subset  L, \\
 & u \left(\bR \times \{ 0 \}  \right) \subset  Q, \\
 &  \lim_{s \rightarrow - \infty} u(s,t) = q^j , \\ 
&  \lim_{s \rightarrow + \infty} u(s,t) = q^i
\end{aligned}
\right.
\end{equation}
These boundary conditions are conveniently summarised in Figure \ref{fig:strip}.  
\begin{figure}
  \centering
 \includegraphics{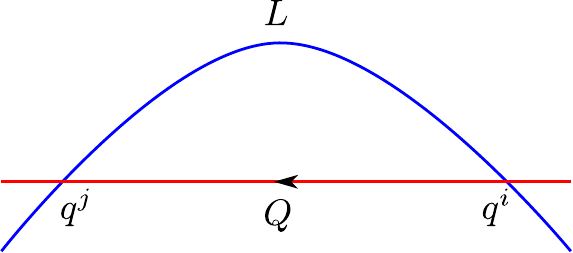}
  \caption{}
 \label{fig:strip}
\end{figure}

As the Lagrangians $Q$ and $L$ are both graded, we may assign an integer to each intersection point $q \in Q \cap L$ called the degree of $q$ and denoted $|q|$.  If we were trying to define the Floer cohomology $HF^*(L,Q)$, this would be the degree of the corresponding generator of the Floer complex.

\begin{prop} \label{prop:moduli_spaces_strips_manifold}
For a generic family $I_{t}$, the Gromov compactification of $ \Pil(q^i, q^j) $ is a topological manifold  $\Pilbar(q^i, q^j)$ of dimension $|q^i| - |q^j|$, possibly with boundary.  The boundary is stratified into topological manifolds with the closure of codimension $1$ strata given by the images of embeddings
\begin{equation} \Pilbar(q^i, q^{i_1}) \times \Pilbar(q^{i_1}, q^j) \to  \Pilbar(q^i, q^j) \end{equation}
for all possible integers $i_{1}$ between $i$ and $j$.  For each pair $i_1 < i_2$ of such integers, we have a commutative diagram 
\begin{equation} \label{eq:compatible_boundary_strata_inclusions}  \xymatrix{ \Pilbar(q^i, q^{i_1}) \times \Pilbar(q^{i_1}, q^{i_2}) \times  \Pilbar(q^{i_2}, q^{j}) \ar[d] \ar[r] &  \Pilbar(q^i, q^{i_1}) \times \Pilbar(q^{i_1}, q^{j}) \ar[d] \\
 \Pilbar(q^i, q^{i_2}) \times \Pilbar(q^{i_2}, q^{j}) \ar[r] &  \Pilbar(q^i, q^{j}) . }
 \end{equation}
\end{prop}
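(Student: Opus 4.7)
The plan is to follow the standard three-step scheme of Floer theory: transversality to obtain smooth manifolds of the expected dimension, uniform compactness to control limits, and gluing to promote the Gromov compactification to a stratified topological manifold whose codimension-$1$ boundary is the stated union of products.

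For transversality, the linearisation of \eqref{eq:dbar_no_X} at any solution is a Cauchy-Riemann operator with totally real boundary conditions, Fredholm in the usual Sobolev setting because the asymptotic operators at $\pm\infty$ are nondegenerate as a consequence of the transverse intersection of $Q$ with $L$. The grading conventions on $Q$ and on $L$ identify the Fredholm index with $|q^i|-|q^j|$. Since $q^i\neq q^j$, no solution is the constant strip, and a standard universal moduli space argument varying $I_t$ over $\sJ(M)$ shows that the total linearisation is surjective; the Sard--Smale theorem then yields a comeager subset of families $I_t$ for which the unreduced moduli space is smooth, and the free $\bR$-translation action furnishes $\Pil(q^i,q^j)$ with the claimed smooth manifold structure.

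The compactness statement rests on two observations. First, because $Q$ and $L$ are exact with chosen primitives $f_Q$ and $f_L$, Stokes' theorem yields the \emph{a priori} energy identity
\begin{equation*}
\int_{Z} u^{*}\omega \;=\; \bigl(f_{Q}(q^{j}) - f_{L}(q^{j})\bigr) - \bigl(f_{Q}(q^{i}) - f_{L}(q^{i})\bigr),
\end{equation*}
which uniformly bounds the energy of any element of $\Pil(q^i,q^j)$. Second, the contact-type condition $\lambda\circ I_t = dr$ on the cylindrical end, combined with the compactness of $Q$ and the Legendrian condition on $\partial L$, allows an integrated maximum principle confining every solution to a fixed compact region of $M$. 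Exactness of $\omega$ and of the restrictions of $\lambda$ to $Q$ and to $L$ rules out sphere and disc bubbling, so Gromov compactness produces broken-strip limits whose intermediate asymptotic points lie between $q^i$ and $q^j$ in the action ordering, by non-negativity of energy on each component.

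For gluing, a standard pregluing-and-Newton construction as developed in \cite{seidel-book} associates to any transverse broken configuration $(u_1,u_2)\in\Pilbar(q^i,q^{i_1})\times\Pilbar(q^{i_1},q^j)$ a one-parameter family of unbroken strips that Gromov-converges to $(u_1,u_2)$ as the gluing parameter tends to infinity; this identifies a $C^0$ collar neighbourhood of the product stratum inside $\Pilbar(q^i,q^j)$ and realises the codimension-$1$ boundary as the claimed union of products. The square \eqref{eq:compatible_boundary_strata_inclusions} then encodes the associativity of two-step gluing: starting from a triple configuration, one may glue the inner or the outer pair first, and the uniqueness of Gromov limits of the resulting two-parameter families ensures both routes parametrise the same codimension-$2$ corner of $\Pilbar(q^i,q^j)$. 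The main technical obstacle is assembling the local gluing parametrisations into a \emph{globally coherent} topological manifold-with-corners structure; by asserting only a topological (not smooth) structure, one bypasses the delicate corner-smoothing issues and relies on the $C^0$ output of the gluing map.
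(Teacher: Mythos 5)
Your outline matches the paper's: transversality by a Sard--Smale argument, compactness from an \emph{a priori} energy bound plus a maximum-principle confinement, and a gluing theorem to endow the Gromov compactification with the structure of a stratified topological manifold with boundary. You also correctly identify that only a topological (not smooth) manifold-with-corners structure is required, which is exactly the simplification the paper exploits.

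The divergence is in how the gluing step is discharged, and this is where I would flag a genuine gap. You assert that the pregluing-and-Newton iteration of \cite{seidel-book} ``identifies a $C^{0}$ collar neighbourhood of the product stratum,'' but that requires knowing the gluing map is injective and surjective onto a neighbourhood, and you do not indicate how either is established. The paper's appendix is devoted precisely to this: following \cite{FOOO}, it rigidifies each component strip by choosing marked points and auxiliary hypersurfaces $N_{k,\ell}$ transverse to the evaluation maps, which simultaneously (i) selects a canonical right inverse for the linearised operator, and (ii) produces an explicit chart $R\co W\to\bR^{d}$ on the moduli space by recording the $s$-coordinates of the intersections with the $N_{k}$. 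The identity $R\circ\G = \text{(pregluing coordinates)}$ then gives injectivity of $\G$ for free, and together with Gromov compactness and the uniqueness clause of the implicit function theorem gives surjectivity. Citing \cite{seidel-book} here is not obviously sufficient: the gluing arguments there are developed chiefly for $0$- and $1$-dimensional moduli spaces, whereas the present proposition needs the full manifold-with-corners statement in all dimensions so that the cubical fundamental chains can be chosen inductively. Separately, your explanation of diagram \eqref{eq:compatible_boundary_strata_inclusions} via ``associativity of two-step gluing'' is more elaborate than necessary: the diagram commutes by definition of the Gromov compactification, since the strata are products of lower moduli spaces and the horizontal and vertical maps are induced by the same concatenation of the inner or outer pair of factors. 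Finally, a small sign slip: Stokes gives
\begin{equation*}
\int_{Z} u^{*}\omega = \bigl(f_{Q}(q^{i}) - f_{L}(q^{i})\bigr) - \bigl(f_{Q}(q^{j}) - f_{L}(q^{j})\bigr),
\end{equation*}
i.e.\ action at the $s\to+\infty$ end minus action at the $s\to-\infty$ end, consistent with non-negativity of energy; your formula is reversed.
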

\begin{rem}
 In Appendix \ref{app:manifold-with-boundary} we explain how to prove the result we need using only ``standard results''.  In the appendix to \cite{BC}, Barraud and Cornea provide an alternative construction.
\end{rem}

In order to define an evaluation map from the moduli space of holomorphic discs to the space of paths on $Q$, we must fix parametrisations.  One may inductively make choices of such parametrisations using the contractibility of the space of self-homeomorphisms of the interval.  Alternatively, as suggested to the author by Janko Latschev, one may fix a metric on $Q$, and  parametrise the boundary segments of a holomorphic disc by arc length.  Indeed, it follows from elliptic regularity that each solution to \eqref{eq:holomorphic_strip} restricts on the appropriate boundary component to a  smooth map from $\bR$ to $Q$, and from Theorem A of \cite{RS}, that its derivatives decay exponentially in the $C^{\infty}$ topology at the ends.  In particular, the arc-length parametrisation of such a curve defines a continuous map from $[0,R]$ to $Q$, with $R$ the length of the image.

As a sequence of holomorphic strips converge to a broken one, the length parametrisations of the boundary also converge: this follows from the usual proof of Gromov compactness by using estimate (4.7.13) of \cite{MS} which proves that, near the region where breaking occurs, the norm of the derivatives of a family of holomorphic strips decays exponentially, so that the resulting paths on $ Q $  converge to the concatenation of two curves.  This implies that this evaluation map extends continuous to the Gromov compactification of the moduli space of discs.    Note that the direction of the direction of the path, as indicated in Figure \ref{fig:strip}, goes from $q^i$ to $q^j$.

\begin{lem} \label{lem:evaluate_discs_paths}
There exists a family of evaluation maps
\begin{equation}  \Pilbar(q^i,q^j) \to \Omega(q^i,q^j) \end{equation}
such that we have a commutative diagram
\begin{equation}\label{eq:evaluate_discs_paths_commute}   \xymatrix{ \Pilbar(q^i,q^k) \times  \Pilbar(q^k,q^j) \ar[d] \ar[r] & \Pilbar(q^i,q^j)\ar[d] \\
 \Omega(q^i,q^k) \times  \Omega(q^k,q^j) \ar[r] &  \Omega(q^i,q^j).}
  \end{equation} 
in which the top horizontal arrow is the inclusion of Equation \eqref{eq:compatible_boundary_strata_inclusions} and the bottom arrow is given by concatenation of paths.
\noproof
\end{lem}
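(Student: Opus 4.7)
The plan is to construct the evaluation map in two stages---first on the smooth locus of each moduli space of strips, then to extend continuously across the Gromov compactification---and finally to verify commutativity of the diagram. Fix once and for all a Riemannian metric on $Q$. For a smooth strip $u \in \Pil(q^i,q^j)$, I would restrict to the boundary component lying on $Q$ to obtain a smooth map $\gamma_u \co \bR \to Q$, $\gamma_u(s) = u(s,0)$, with $\gamma_u(s) \to q^j$ as $s\to -\infty$ and $\gamma_u(s) \to q^i$ as $s \to +\infty$. By Theorem A of \cite{RS}, the derivative of $\gamma_u$ decays exponentially at the ends, so the total arc length $R_u = \int_{\bR} |\gamma_u'(s)|\, ds$ is finite. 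Reparametrising $\gamma_u$ by arc length---traversed in the direction from $q^i$ to $q^j$, as dictated by the convention of Figure \ref{fig:strip}---yields a continuous map $[0,R_u] \to Q$, hence an element of the Moore path space $\Omega(q^i,q^j)$.

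The second step is to extend the evaluation continuously to $\Pilbar(q^i,q^j)$. On the interior, continuity reduces to the $C^{\infty}$-convergence of holomorphic strips on compact subsets combined with uniform exponential tail bounds controlled by the estimate (4.7.13) of \cite{MS}, which dominates the derivative of a strip in terms of its local energy. To handle a sequence $u_n$ whose Gromov limit is a broken strip $(u^{(1)},u^{(2)}) \in \Pilbar(q^i,q^{i_1})\times\Pilbar(q^{i_1},q^j)$, I would apply the same estimate in the neck region to show that the contribution of the neck to the arc length of $\gamma_{u_n}$ tends to zero, while away from the neck the boundary restrictions converge smoothly to $\gamma_{u^{(1)}}$ and $\gamma_{u^{(2)}}$ on compact subsets. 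Combining these, the arc-length parametrised paths $[0,R_{u_n}] \to Q$ converge uniformly to the concatenation of the arc-length parametrisations of $u^{(1)}$ and $u^{(2)}$, yielding continuity across the stratum.

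Once continuity is in hand, the commutative diagram \eqref{eq:evaluate_discs_paths_commute} is essentially tautological: by construction the boundary restriction of a broken strip is the concatenation of the boundary restrictions of its pieces, and arc-length reparametrisation commutes with concatenation of Moore paths. I expect the main obstacle to be precisely the neck analysis in the second step: one must simultaneously show that the total arc length behaves additively in the limit and that the reparametrised paths themselves---not merely their lengths---converge uniformly. This hinges in an essential way on the exponential decay estimate, since arc-length reparametrisation is not a smooth operation on the space of maps and only $C^0$ convergence of the resulting paths can be expected in the end.
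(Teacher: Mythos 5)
Your proposal follows essentially the same route as the paper: fix a metric on $Q$, reparametrise the boundary arc of each strip by arc length (using Theorem A of \cite{RS} for exponential decay at the ends), extend continuously over the Gromov compactification via the neck estimate (4.7.13) of \cite{MS}, and observe that commutativity of \eqref{eq:evaluate_discs_paths_commute} is then automatic because arc-length parametrisation respects concatenation. The paper presents exactly this argument in the two paragraphs preceding the lemma and labels the lemma itself \emph{noproof}.
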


Before passing to chains, we recall that the choice of brane structure can be used to orient these moduli spaces.  Our orientation conventions follow those of \cite{seidel-book}, and are discussed in Appendix \ref{sec:orientations} and \ref{sec:signs}
\begin{lem} \label{lem:product_orientations_strips}
The product orientation on  $\Pilbar(q^i, q^k) \times \Pilbar(q^k, q^j)$ differs from its orientation as a boundary of $\Pilbar(q^i, q^j)$ by a sign given by the parity of $|q^i| + |q^k|$. \end{lem}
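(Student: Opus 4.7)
The approach is the standard Koszul-sign calculation in Floer theory. I would express both the product and boundary orientations in terms of the graded orientation lines $o_q$ attached to intersection points, following the orientation scheme developed in Appendix \ref{sec:orientations}. Recall that for each strip $u \in \Pil(q^i,q^j)$ there is a canonical isomorphism $\det(D_u) \cong o_{q^i} \otimes o_{q^j}^{-1}$ obtained by linear gluing at the two ends, and that the orientation on $\Pil(q^i,q^j)$ is determined by $\det(T\Pil(q^i,q^j)) \otimes \bR\partial_s \cong \det(D_u)$, with $\bR\partial_s$ representing the $\bR$-translation direction in the kernel of $D_u$.

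Applying this formula to each factor presents the product orientation on $\Pilbar(q^i,q^k) \times \Pilbar(q^k,q^j)$ as a tensor product of six ordered graded lines, namely $o_{q^i}, o_{q^k}^{-1}, (\bR\partial_s^{(1)})^{-1}, o_{q^k}, o_{q^j}^{-1}, (\bR\partial_s^{(2)})^{-1}$. The boundary orientation, on the other hand, is extracted from the intrinsic orientation of $\Pilbar(q^i,q^j)$ via the outward-normal-first convention applied to the inward-pointing gluing direction $\partial_\rho$. Bridging the two sides requires two ingredients: the standard linear gluing isomorphism $\det(D_u) \cong \det(D_{u_1}) \otimes \det(D_{u_2})$, which via the canonical evaluation $o_{q^k}^{-1} \otimes o_{q^k} \to \bR$ splits the composite orientation line, together with the identification $\bR\partial_\rho \otimes \bR\partial_s \cong \bR\partial_s^{(1)} \otimes \bR\partial_s^{(2)}$ relating the gluing-plus-translation pair to the two individual translations (which follows from $\partial_s = \partial_s^{(1)} + \partial_s^{(2)}$ and $\partial_\rho$ being proportional to $\partial_s^{(1)} - \partial_s^{(2)}$).

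The comparison then reduces to a sequence of Koszul transpositions. The dominant contribution arises from commuting an odd-parity translation line past the block $o_{q^i} \otimes o_{q^k}^{-1}$ of total parity $|q^i| + |q^k|$, which produces the sign $(-1)^{|q^i|+|q^k|}$ claimed by the lemma. The main obstacle is purely bookkeeping: one must verify that the remaining sign contributions---from the outward-normal convention, from the identification of gluing and translation directions, and from any additional reordering of tensor factors needed to apply the linear gluing identity---combine to $+1$ under the conventions fixed in the appendix. This verification follows the template of the boundary sign computations for moduli spaces of discs discussed in Section 12 of \cite{seidel-book}.
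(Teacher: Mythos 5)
Your overall strategy coincides with the paper's: both express the relative orientation of $\Pilbar(q^i,q^j)$ as a tensor product of the orientation lines $\ro_{q^i}$, $\ro_{q^j}^{\vee}$ and a degree-$(-1)$ translation line, and both locate the sign $(-1)^{|q^i|+|q^k|}$ as the Koszul cost of moving one translation line across $\ro_{q^i}\otimes\ro_{q^k}^{\vee}$. That part is right and is the heart of the paper's Appendix~B computation.

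Where the proposal falls short is in the step you flag as ``purely bookkeeping.'' You route the comparison through an auxiliary change of basis $\bR\partial_\rho\otimes\bR\partial_s\cong\bR\partial_s^{(1)}\otimes\bR\partial_s^{(2)}$, justified by ``$\partial_s=\partial_s^{(1)}+\partial_s^{(2)}$ and $\partial_\rho$ proportional to $\partial_s^{(1)}-\partial_s^{(2)}$.'' This identification carries exactly the sign ambiguities the lemma is about: the determinant of that change of basis is $\pm 2c$ where $c$ is the unspecified proportionality constant, so its sign depends on whether $\partial_\rho$ points in the direction $\partial_s^{(1)}-\partial_s^{(2)}$ or the opposite, and a further sign enters when you replace the inward-pointing $\partial_\rho$ by the outward normal $\nu=-\partial_\rho$. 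None of this is verified, yet it is precisely the content needed to show the ``remaining contributions'' are $+1$. Moreover, since $\Pil(q^i,q^k)$ and $\Pil(q^k,q^j)$ are already quotiented by translation, the vectors $\partial_s^{(1)}$, $\partial_s^{(2)}$ are not literal tangent vectors of the product, so the $2\times 2$ change-of-basis picture needs care before it even parses. The paper bypasses all of this by a cleaner normalization: Eq.~\eqref{eq:orientation_isomorphism_discs} places the translation line $\langle\partial_s\rangle$ in the \emph{first} slot, and Seidel's Section~(12f) is invoked to say that, under gluing, the first factor's $\partial_s^{(1)}$ \emph{is} the outward normal $\nu$ (orientation included), after which the entire comparison is the single transposition you identified plus the cancellation of $\ro_{q^k}^{\vee}\otimes\ro_{q^k}$. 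Your argument would be complete once you either adopt that normalization or explicitly pin down the sign of $c$ and the relation between $\partial_\rho$ and $\nu$; as written, the assertion that the residual sign is $+1$ is an unproven claim rather than a computation.
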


This result implies that we may choose fundamental chains $[\Pilbar(q^i, q^j)]$ in the cubical chain complex $C_{*}( \Pilbar(q^i, q^k))$ inductively.  We start by picking representatives of the fundamental cycles of all components of $\bigcup \Pilbar(q^i, q^j)$ which are manifolds without boundary, compatibly with the orientations determined by our choices of brane data and the isomorphism of  Equation \eqref{eq:orientation_isomorphism_discs}.

Next, we proceed by induction, assuming that a class satisfying
\begin{equation} \label{eq:boundary_strips_breaking}  \partial [\Pilbar(q^i, q^{j})] \\ = \sum_{k} (-1)^{ |q^i|+  |q^k| }  [\Pilbar(q^i, q^k)] \times  [\Pilbar(q^k, q^{j'})]  \end{equation}
has been chosen for all moduli spaces of dimension smaller than some integer $m$.  In the induction step, we note that the associativity of the cross product and the commutativity of the diagram \eqref{eq:evaluate_discs_paths_commute} imply that
\begin{equation} \partial \left( \sum_{k} (-1)^{ |q^i|+  |q^k|  }  [\Pilbar(q^i, q^k)] \times  [\Pilbar(q^k, q^j)] \right) = 0.  \end{equation}
As this is a closed class of codimension $1$ supported on the boundary of $\Pilbar(q^i, q^j)$, we may choose a bounding class  $[\Pilbar(q^i, q^j)]$.

\begin{lem} \label{lem:definition_twisted_complex_Lag}
Each Lagrangian $ L \in \Ob(\Wrap_{b}(M))$ determines a twisted complex $\cF(L)$ in $\Tw(\Moore(Q))$ given by
\begin{equation} \left( \bigoplus_{q^i \in Q \cap L} q^i [-|q^i|], \sum_{q^i, q^j} (-1)^{|q^i| \left( |q^j| +1 \right)} [\Pilbar(q^i, q^j)] \right). \end{equation}
\end{lem}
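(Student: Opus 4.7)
The plan is to verify the two data defining a twisted complex: that each $\delta_{i,j}$ has degree $1$ in the shifted hom space $\Hom_*(q^i[-|q^i|], q^j[-|q^j|])$, and that the matrix $D = (\delta_{i,j})$ satisfies the Maurer--Cartan equation $\mu^{\P}_{1}(D) + \mu^{\P}_{2}(D,D) = 0$. The sum appearing in this equation is automatically finite because the $q^i$ are ordered by action: the symplectic area of a holomorphic strip from $q^j$ to $q^i$ is non-negative and equals the difference in actions, so $\Pilbar(q^i,q^j)$ can only be nonempty for $i < j$, giving $D$ a strictly upper-triangular form. The degree check is immediate: the fundamental chain $[\Pilbar(q^i,q^j)]$, pushed forward via the evaluation map of Lemma \ref{lem:evaluate_discs_paths}, is a cubical chain whose degree in $\Hom_*(q^i, q^j)$ of the unshifted category matches degree $1$ under the shift identity $\Hom_*(q^0[m_0], q^1[m_1]) = \Hom_*(q^0,q^1)[m_1-m_0]$.

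The substantive step is the Maurer--Cartan equation. The geometric input is the boundary formula \eqref{eq:boundary_strips_breaking}, derived inductively from Lemma \ref{lem:product_orientations_strips}. Pushing it forward along the evaluation map and invoking the compatibility diagram \eqref{eq:evaluate_discs_paths_commute} (which converts cross products of fundamental chains into path-concatenation products) yields
$$ \partial [\Pilbar(q^i, q^j)] = \sum_k (-1)^{|q^i| + |q^k|} \, [\Pilbar(q^i, q^k)] \cdot [\Pilbar(q^k, q^j)]$$
in $C_*(\Omega(q^i,q^j))$. Multiplying by $(-1)^{|q^i|(|q^j|+1)}$ turns the left-hand side into $\mu^{\P}_{1}(\delta_{i,j})$. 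Each summand on the right must then be matched with $-\mu^{\P}_{2}(\delta_{k,j}, \delta_{i,k})$, which in the shifted category is obtained by combining the Koszul product rule $\mu^{\P}_{2}(\sigma_2, \sigma_1) = (-1)^{\deg \sigma_1} \sigma_1 \cdot \sigma_2$ with the shift sign $(-1)^{(\deg \sigma_2 + 1)(m_1-m_0)}$ from \eqref{eq:sign_additive_enlargement} and with the three prefactors $(-1)^{|q^i|(|q^j|+1)}$, $(-1)^{|q^i|(|q^k|+1)}$, $(-1)^{|q^k|(|q^j|+1)}$ placed on $\delta_{i,j}, \delta_{i,k}, \delta_{k,j}$.

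The main obstacle is the sign bookkeeping. All geometric content is already present in \eqref{eq:boundary_strips_breaking} and \eqref{eq:evaluate_discs_paths_commute}; the content of the lemma is that the explicit prefactor $(-1)^{|q^i|(|q^j|+1)}$ in the statement was engineered precisely so that for every triple $i < k < j$, the parities of (i) the boundary orientation sign $(-1)^{|q^i|+|q^k|}$, (ii) the three prefactors just listed, (iii) the Koszul swap in $\mu^{\P}_{2}$, and (iv) the shift sign of \eqref{eq:sign_additive_enlargement} combine to differ by exactly $1 \pmod 2$, producing the required cancellation. This mod-$2$ identity is routine, and no further input beyond the boundary formula and the concatenation compatibility is needed.
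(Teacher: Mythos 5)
Your proposal is correct and follows the paper's own proof essentially step for step: both verify the Maurer--Cartan equation for $D$ by combining the inductive boundary formula \eqref{eq:boundary_strips_breaking} (pushed forward via the evaluation maps of Lemma \ref{lem:evaluate_discs_paths}) with the Koszul sign in $\mu^{\P}_{2}$, the shift sign of Equation \eqref{eq:sign_additive_enlargement}, and the defining prefactors $(-1)^{|q^i|(|q^j|+1)}$, and both leave the final mod-$2$ identity as a routine check. The only difference is that you make explicit two points the paper leaves implicit --- the strict upper-triangularity of $D$ via the action ordering, and the fact that the $\delta_{i,j}$ are the evaluation-map pushforwards of the fundamental chains rather than the chains themselves --- which is a slight improvement in exposition but not a different argument.
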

\begin{proof}
 We must verify Equation \eqref{eq:twisted_complex_equation}, which takes the form
 \begin{equation*}
 (-1)^{|q^i| \left( |q^j| +1 \right)} \partial  [\Pilbar(q^i, q^j)]   + \sum_{k}  (-1)^* [\Pilbar(q^i, q^k)] \times  [\Pilbar(q^k, q^j)]  = 0,
 \end{equation*}
where the contributions to the signs in the second term are given by 
\begin{align*}
 |q^i| \left( |q^k| +1 \right)  + |q^k| \left( |q^j| +1 \right) &   \textrm{ from  the definition of the twisted complex} \\
 (\dim(\Pilbar(q^k, q^j))+1)(|q^k| -|q^i|) & \textrm{ from Equation \eqref{eq:sign_additive_enlargement}} \\ 
  \dim(\Pilbar(q^i, q^k)   &  \textrm{ from Equation \eqref{eq:signs_a_infty_cubical}.}
\end{align*}
Ignoring signs, we see that Equation \eqref{eq:boundary_strips_breaking} implies the desired result.  To verify that the signed formula is correct, the reader should check that the sum of these contributions with $|q^i| \left( |q^j| +1 \right) $  is equal to $1 + |q^i| + |q^k| $ using the formula for the dimension of the moduli spaces given in Proposition \ref{prop:moduli_spaces_strips_manifold}.
\end{proof}
\section{Review of the wrapped Fukaya category} \label{sec:revi-wrapp-fukaya}
\subsection{Preliminaries for Floer theory} \label{sec:review-wrapped}
In \cite{abouzaid-seidel}, we defined the wrapped Fukaya category of a Liouville domain $M$.  In this paper, we use a variant which does not use direct limits, following the construction introduced in \cite{generate} in which we use a  Hamiltonian  function growing quadratically at infinity to define Floer cohomology groups.

Recall that $M$ is a manifold equipped with a Liouville $1$-form $\lambda$.   The Liouville vector field  $Z_{\lambda}$ defined by the equation
\begin{equation*}  i_{Z_{\lambda}} \omega = \lambda \end{equation*}
agrees with the radial vector field $-r \partial_{r}$ along the cylindrical end of $M$, and we write  $\psi^{\rho}$ for the image of the negative Liouville flow for time $\log(\rho)$.  Note that this flow may be written explicitly on the cylindrical end: 
\begin{equation*}
  \psi^{\rho}(r,m) = (\rho \cdot r , m).
\end{equation*}

Let $\sH(M) \subset C^{\infty}(M, \bR) $ denote the set of smooth functions $H$ satisfying
\begin{equation}
  \label{eq:quadratic_growth}
  H(r,y) = r^2
\end{equation}
away from some compact subset of $M$, and write $\sH_{Q}(M)$ for those which in addition vanish on $Q$.  We shall fix such a function for the purposes of defining Floer cohomology, and let $X$ denote the Hamiltonian flow of $H$ defined by the equation 
\begin{equation*}  i_{X} \omega = dH. \end{equation*}

For each pair $L_0, L_1 \in \Ob(\Wrap_{b}(M))$, we define  $\Chord(L_{0},L_1)$ to be the set of time-$1$ flow lines of $X$ which start on $L_0$ and end on $L_1 $, i.e. an element of $x$ is a map  $x \co [0,1] \to M $ such that
\begin{equation*}
  \begin{cases}
  x(0) & \in L_0  \\
x(1) & \in L_1  \\
dx/dt & = X.  
  \end{cases}
\end{equation*}
We shall assume that
\begin{equation}
  \label{eq:non-degenerate_chord}
    \parbox{36em}{all  time-$1$ Hamiltonian chords of $H$ with boundaries on $L_0$ and $L_1$  are non-degenerate.}
\end{equation}
It is  convenient to remember that the elements of $\Chord(L_{0},L_1)$  are in bijective correspondence with intersection points between $L_1$ and the image of $L_0$ under the time-$1$ Hamiltonian flow of $H$.  Moreover, those chords which lie in the complement of $\partial M^{in}$ are in bijective correspondence with Reeb chords with endpoints on the $\partial L_{0}^{in} $ and  $\partial L_{1}^{in} $ which, by Condition \eqref{eq:boundary_Legendrian}, are Legendrian submanifolds of $\partial M^{in}$.

In particular, non-degeneracy of chords lying in $M^{in}$ corresponds to transversality between $L_1$ and the image of $L_0$ under the time-$1$ Hamiltonian flow of $H$,  while non-degeneracy in the cylindrical end corresponds to non-degeneracy of all Reeb chords.  Condition \eqref{eq:non-degenerate_chord} therefore holds after generic Hamiltonian perturbations of the Lagrangians in $\Ob(\Wrap_{b}(M))$, and one may moreover assume that the perturbation preserves Condition \eqref{eq:boundary_Legendrian}.  We shall therefore replace any collection $\Ob(\Wrap_{b}(M))  $ by Hamiltonian isotopic ones which satisfy Condition \eqref{eq:non-degenerate_chord}.  As Lagrangian Floer cohomology is invariant under Hamiltonian perturbations, this results in no loss of generality.

To each element $ x \in  \Chord(L_{0},L_1)$, we assign a Maslov index $|x| $ coming from the gradings on $L_0$ and $L_1$.  This grading agrees with the Maslov index of the intersection of $L_1$ with the image of $L_0$ under the time-$1$ Hamiltonian flow of $H$ which is defined, for example, in Section (11h) of \cite{seidel-book}.  

We define the action of an $X$-chord starting on $L_i$ and ending on $L_j$ by the familiar formula (note that our conventions on action differ by a sign from those of \cite{AS})
\begin{equation} \Action(x) = -\int_{0}^{1} x^{*}(\lambda) + \int H(x(t)) dt + f_{L_j}(x(1)) - f_{L_i}(x(0))  \end{equation}
Equation \eqref{eq:quadratic_growth} implies that any chord that intersects some slice $\partial M^{in} \times \{r\} $ is contained therein, and has action
\begin{equation}  \label{eq:action_formula_cylinder} \Action(x) =  -r^{2} + \ell_{j} - \ell_{i}\end{equation}
where $\ell_{j}$ and $\ell_{i}$ are the values of $f_i$ and $f_j$ on the ends of $L_i$ and $L_j$.
\begin{lem} \label{lem:action_proper}
$\Action$ is a proper map from $ \Chord(L_{0},L_1) $ to $\bR$.
\end{lem}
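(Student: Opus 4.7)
The plan is to split $\Chord(L_0, L_1)$ into two pieces according to whether a chord lies entirely in the compact subdomain $M^{in}$ or in the cylindrical end, and to show that the action is proper on each piece.

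First, I would observe that the set of chords lying in $M^{in}$ is finite. Indeed, Condition \eqref{eq:non-degenerate_chord} implies that each such chord is isolated in the space of time-$1$ paths from $L_0$ to $L_1$; since the corresponding paths all lie in a fixed compact subset of $M^{in}$, and a convergent subsequence of non-degenerate chords would produce an accumulation point violating non-degeneracy, there are only finitely many. Consequently the action takes only finitely many values on this set and the restriction of $\Action$ to it is trivially proper.

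Next, I would analyse chords lying in the cylindrical end. Here I would use the computation of the Hamiltonian flow of $H = r^2$ on $[1,+\infty) \times \partial M^{in}$: because $\lambda = r \cdot (\lambda|\partial M^{in})$, the Hamiltonian vector field $X$ at the slice $\{r\}\times \partial M^{in}$ equals $2r$ times the Reeb vector field $R$ of $\lambda|\partial M^{in}$. Therefore a time-$1$ chord of $X$ contained in the slice $\{r\} \times \partial M^{in}$ corresponds bijectively to a Reeb chord of length $2r$ from $\partial L_0^{in}$ to $\partial L_1^{in}$, which by Condition \eqref{eq:boundary_Legendrian} are Legendrian submanifolds of $\partial M^{in}$. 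Condition \eqref{eq:non-degenerate_chord} states that all such Reeb chords are non-degenerate, hence the length spectrum of Reeb chords between $\partial L_0^{in}$ and $\partial L_1^{in}$ is discrete with no finite accumulation point, and each admissible length supports only finitely many Reeb chords (by compactness of $\partial M^{in}$ together with non-degeneracy).

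Finally, I would combine these observations with the explicit formula \eqref{eq:action_formula_cylinder}, $\Action(x) = -r^2 + \ell_j - \ell_i$, which shows that for $\Action(x)$ to lie in a bounded interval $[A,B] \subset \bR$, the radius $r$ must lie in a bounded interval. Only finitely many admissible values of $r$ (i.e., values of the form $r = L/2$ with $L$ in the Reeb chord length spectrum) can therefore contribute, and each such value admits only finitely many chords. Combining with the finiteness of chords in $M^{in}$, the preimage $\Action^{-1}([A,B])$ is finite, hence compact, so $\Action$ is proper. The only real subtlety I anticipate is justifying the correspondence between $X$-chords in the end and Reeb chords of prescribed length, which amounts to a short coordinate computation on the symplectisation.
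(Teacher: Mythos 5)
Your proof is correct and follows essentially the same route as the paper: non-degeneracy yields finiteness of chords in any compact region, and the action formula \eqref{eq:action_formula_cylinder} forces chords with bounded action to stay in a compact region. The paper compresses this into two sentences, while you spell out the Reeb-chord correspondence and the discreteness of the length spectrum (already noted in Section \ref{sec:review-wrapped}), but the substance is identical.
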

\begin{proof}
Non-degeneracy implies that there are only finitely many chords in any compact subset of $M$, while Equation \eqref{eq:action_formula_cylinder} implies that the action of a sequence of chords which escapes every compact set must go to $-\infty$. 
\end{proof}

\subsection{Moduli spaces of strips}
Let us fix once and for all  a smooth map
\begin{equation*} \tau \co [0,1] \to [0,1]  \end{equation*}
such that $\tau$ is identically $0$ (respectively $1$) in a neighbourhood of the appropriate endpoint.   Given a pair $x_0, x_1 \in    \Chord(L_{0},L_1)$ we define  $\Disct(x_0;x_1)$ to be the moduli space of maps
\begin{equation*} u \co  Z \to M  \end{equation*}
satisfying the following boundary and asymptotic conditions
\begin{equation*} 
\left\{
\begin{aligned}
 & u \left(\bR \times \{ 1 \} \right) \subset  L_1, \\
 & u \left(  \bR \times \{ 0 \}   \right) \subset L_0, \\
 &  \lim_{s \rightarrow - \infty} u(s, \cdot) = x_{0}( \cdot)  , \\ 
&  \lim_{s \rightarrow + \infty} u(s,\cdot) = x_1(\cdot)
\end{aligned}
\right.
\end{equation*}
as solving Floer's equation
\begin{equation*}  \partial_{s}u = - I_{t} \left(\partial_tu - X \frac{d \tau}{dt} \right). \end{equation*}
We shall write this equation in a coordinate free way as
\begin{equation}
  \label{eq:dbar_strip}
  \left(du - X \otimes d\tau  \right)^{0,1} = 0.
\end{equation}
Since Equation \eqref{eq:dbar_strip} is invariant under translation in the $s$-variable, the reals act on $\Disct(x_0;x_1)$.  The analogue of Theorem \ref{prop:moduli_spaces_strips_manifold} holds as well
\begin{lem} \label{lem:free_R_action_strips}
The moduli space  $\Disct(x_0;x_1)$ is regular for a generic choice of almost complex structures $I_t$ and has dimension $|x_0| - |x_1|$. \noproof
\end{lem}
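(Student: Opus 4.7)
The plan is to treat this as a standard Floer-theoretic transversality and index statement, following the template in Chapter 8 of Seidel's book or McDuff--Salamon. The dimension formula is a Fredholm index computation, and regularity follows from the Sard--Smale theorem applied to a universal moduli space cut out by varying $I_t$.

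First, I would set up the functional-analytic framework. Fix $p>2$ and consider the Banach manifold $\mathcal B(x_0,x_1)$ of maps $u\co Z\to M$ of class $W^{1,p}_{\mathrm{loc}}$, carrying the Lagrangian boundary conditions of \eqref{eq:dbar_strip} and having $W^{1,p}$ exponential decay to $x_0$ and $x_1$ at the ends in suitable cylindrical coordinates centered on the chords (the non-degeneracy condition \eqref{eq:non-degenerate_chord} is exactly what makes these weighted Sobolev spaces the right ones). The perturbed Cauchy--Riemann operator $u\mapsto (du - X\otimes d\tau)^{0,1}$ is a section of a Banach bundle over $\mathcal B(x_0,x_1)$, whose linearization at a solution $u$ is a Cauchy--Riemann type operator $D_u$ on $u^*TM$ with totally real boundary conditions obtained from $TL_0$ and $TL_1$. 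Standard elliptic regularity shows $D_u$ is Fredholm.

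Next, the index computation. After unwinding the Hamiltonian term via the time-$\tau(t)$ flow of $X$, the linearization is conjugate to a standard Cauchy--Riemann operator with Lagrangian boundary conditions $TL_0$ and $\phi_X^{-1*}TL_1$, asymptotic to the chords $x_0$ and $x_1$ reinterpreted as intersection points of $L_1$ with $\phi_X(L_0)$. The Robbin--Salamon index formula (or Theorem~11.13 of Seidel's book), together with the grading conventions chosen via the brane structures on $L_0$ and $L_1$, gives $\mathrm{ind}(D_u) = |x_0|-|x_1|$, which is precisely the formula appearing in Section~(11h) of \cite{seidel-book} used to define the Maslov grading of chords.

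For transversality, I would form the universal moduli space with $I_t$ ranging over a Banach neighbourhood of the fixed family inside $C^\ell_{\varepsilon}([0,1],\sJ(M))$ (with $\ell$ large and an appropriate decay on the cylindrical end so that the contact-type condition is preserved). The key step is showing surjectivity of the universal linearization: given a cokernel element $\eta$, one uses a unique continuation argument on the open set where $\partial_s u \neq 0$ and $u(s,t)$ avoids the Lagrangians to produce a variation of $I_t$ pairing non-trivially with $\eta$; this requires a somewhere-injective point, which exists by the asymptotic distinctness of the ends $x_0\neq x_1$ (modulo the $\bR$-action) together with the standard argument that non-injective points form a discrete set. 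Sard--Smale then yields a comeagre set of $I_t$'s for which the projection from the universal moduli space is a submersion, so $\Disct(x_0;x_1)$ is cut out transversally and carries the structure of a smooth manifold of dimension $|x_0|-|x_1|$.

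The main obstacle, and the only place where real care is needed, is ensuring that the somewhere-injectivity hypothesis is satisfied even on the boundary components lying in $L_0$ or $L_1$, where the standard argument of Floer--Hofer--Salamon has to be adapted. As usual, one either restricts perturbations to $I_t$ (time-dependent but domain-independent) and argues via the asymptotic behavior at the ends, or allows a small additional Hamiltonian perturbation in the interior of $Z$ to guarantee transversality; both conventions appear in the literature and either suffices for the applications in subsequent sections.
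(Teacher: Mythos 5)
Your proposal is correct and is exactly the standard argument the paper implicitly appeals to by marking this lemma \texttt{\textbackslash noproof}: Fredholm setup in weighted $W^{1,p}$ spaces (using the non-degeneracy of chords from \eqref{eq:non-degenerate_chord}), the index formula of Robbin--Salamon/Seidel giving $|x_0|-|x_1|$ with the grading conventions from Section~(11h) of \cite{seidel-book}, and Sard--Smale applied to the universal moduli space with the Floer--Hofer--Salamon somewhere-injectivity argument, adapted to strips with Lagrangian boundary. Your flagging of the somewhere-injectivity subtlety at boundary components and of the two standard resolutions (domain-independent $J_t$ with asymptotic arguments, versus interior Hamiltonian perturbations) is exactly the right place to be careful, and either route suffices here.
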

We write $\Disc(x_0;x_1)$ for the quotient of $\Disct(x_0;x_1)$ by the $\bR$ action whenever it is free, and declare it to be the empty set otherwise.

By adding \emph{broken strips} to this moduli space, we obtain a manifold with boundary   $\Discbar(x_0; x_1)$ whose strata are disjoint unions over all sequences starting with $x_0$ and ending with $x_1$ of the products of the moduli spaces
\begin{equation}
 \Disc(x_0; y_1) \times   \Disc(y_1; y_2)  \times \cdots \times  \Disc(y_k; y_{k+1})  \times  \Disc(y_{k+1}; x_1) 
\end{equation}
Since the Lagrangians $L_0$ and $L_1$ have infinite ends, it does not immediately follow from Gromov compactness that $\Discbar(x_0; x_1)  $ is compact. 

\begin{lem} \label{lem:gromov_compactness}
If a solution to Equation \eqref{eq:dbar_strip} converges at $-\infty$ to $x_0$ and at $+\infty$ to $x_1$, then
\begin{equation} \Action(x_0) \geq \Action(x_1). \end{equation}
Moreover, all such solutions lie entirely within a compact subset of $M$ depending only on $x_0$ and $x_1$.
\end{lem}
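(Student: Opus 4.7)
The lemma splits into two assertions, an action inequality and a $C^0$ bound, which I treat in turn.

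\emph{Action inequality.}  The approach is to introduce the geometric energy
\[
E(u) \;=\; \tfrac{1}{2}\int_Z \bigl|du - X \otimes d\tau\bigr|_{I_t}^2 \, ds\, dt,
\]
which is manifestly non-negative, and identify it with $\Action(x_0) - \Action(x_1)$.  Expanding using \eqref{eq:dbar_strip} and the identity $I\partial_s u = \partial_t u - X\tau'$ rewrites the integrand as $u^{*}\omega(\partial_s,\partial_t) + \tau'(t)\,\partial_s(H\circ u)$.  Applying Stokes's theorem to $\int_Z u^{*}\omega$, using $\lambda|_{L_i} = df_{L_i}$ along the Lagrangian boundary components and the asymptotic conditions on $x_0,x_1$, produces the $-\int x_0^{*}\lambda + \int x_1^{*}\lambda$ terms and the boundary $f_{L_i}$ contributions appearing in $\Action$.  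The remaining piece integrates in $s$ to $H(x_1) - H(x_0)$, using that $H$ is constant along $X$-chords and $\int_0^1 \tau'\,dt = 1$.  Reassembling gives $E(u) = \Action(x_0) - \Action(x_1)$, so non-negativity yields the stated inequality.

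\emph{$C^0$ bound via maximum principle.}  The plan is to bound $\phi := r\circ u$ on $\Sigma := u^{-1}([1,\infty)\times \partial M^{in})$ by the constant $R_0 := \max\{\max_t r(x_0(t)),\, \max_t r(x_1(t)),\, 1\}$.  The contact-type condition $\lambda\circ I = dr$ (equivalently $dr\circ I = -\lambda$), the explicit form $H = r^2$, and the fact that $X$ is proportional to the Reeb vector field on the cylindrical end (so $dr(X) = 0$ and $\lambda(X) = -2r^2$) together give the identities
\[
\partial_s\phi \;=\; \lambda(\partial_t u) + 2r^2 \tau'(t), \qquad \partial_t\phi \;=\; -\lambda(\partial_s u),
\]
from which a direct computation produces a strict subharmonic inequality $\Delta\phi > 0$ wherever $\tau'\phi > 0$, ruling out interior maxima of $\phi$ in $\Sigma$ by the strong maximum principle.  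At boundary points of $\Sigma$ on $L_0$ or $L_1$, the Legendrian condition $\lambda|_{T\partial L_i^{in}} = 0$, combined with $\lambda\circ I = dr$, forces the outward normal derivative of $\phi$ to have the sign forbidden by Hopf's boundary lemma, so boundary maxima are ruled out as well.  Since $\phi$ converges to $r\circ x_{0/1} \leq R_0$ as $s \to \pm\infty$, one concludes $\phi \leq R_0$ on all of $Z$, so that $u(Z) \subset \{r\leq R_0\}$ lies in a compact subset of $M$ determined by $x_0$ and $x_1$.

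\emph{Main obstacle.}  The heart of the argument is the boundary maximum principle: because $L_0,L_1$ have infinite cylindrical ends, compactness of the ambient Lagrangians is unavailable, and the Legendrian hypothesis on $\partial L_i^{in}$ is essential for producing the correct sign on the normal derivative of $\phi$ at putative boundary maxima.  A cleaner implementation, and probably what one should actually write, is the \emph{integrated maximum principle} of Abouzaid--Seidel: integrate the energy density over a super-level set $\{\phi > R\}$, apply Stokes to convert to a boundary integral whose sign is controlled by the Legendrian and contact-type conditions, and use this to force the super-level set to be empty for $R > R_0$.  Either way, tracking signs carefully along the Lagrangian boundary components is the delicate step.
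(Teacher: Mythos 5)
Your treatment of the action inequality matches the paper's one-line appeal to "a standard energy estimate using the positivity of $H$": the paper does not spell it out, and your expansion via $\tfrac12|du - X\otimes d\tau|^2 = u^*\omega(\partial_s,\partial_t) + \tau'\partial_s(H\circ u)$ followed by Stokes is exactly what is meant. For the $C^0$ bound the two arguments genuinely differ. The paper invokes the \emph{integrated} maximum principle of Lemma~7.2 of \cite{abouzaid-seidel}: restrict $u$ to $\Sigma = u^{-1}(\partial M^{in}\times[r,\infty))$, apply Stokes to the energy density, observe that the contributions from $\partial\Sigma\cap(\bR\times\{0,1\})$ vanish because $\lambda|_{L_i}=0$ and $H$ is minimised on $\partial M^{in}\times\{r\}$, and derive a sign contradiction on $\partial_r\Sigma$ from the contact-type condition $\lambda\circ J = dr$. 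Your primary argument is the \emph{pointwise} version: show $\phi = r\circ u$ satisfies $\Delta\phi - c\,\tau'\phi\,\partial_s\phi = |\partial_s u|^2 \ge 0$ and use the strong maximum principle together with Hopf's boundary lemma (which works because $\partial_t\phi = -\lambda(\partial_s u) = 0$ on the boundary, whereas Hopf would force it to be strictly negative). This is a valid alternative, but it carries two extra burdens that the integrated version sidesteps: the strong maximum principle only forbids an interior maximum if $\phi$ is non-constant on the relevant component (so you need a unique-continuation remark to rule out locally constant $\phi$, hence $\partial_s u\equiv 0$), and Hopf's lemma likewise presupposes non-constancy near the boundary. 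You correctly flag the integrated Abouzaid--Seidel argument as the cleaner route, and indeed that is precisely what the paper does; given that the paper merely cites it, you could equally well have led with it.
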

\begin{proof}
The first part is a standard energy estimate using the positivity of $H$.  To prove the second, we appeal to the argument given in Lemma 7.2 of \cite{abouzaid-seidel}:  consider any hypersurface $\partial M^{in} \times \{ r \}$ separating $x_0$ and $x_1$ from infinity.  If a solution to Equation \eqref{eq:dbar_strip} escaped this region, we would find a compact surface $\Sigma$, mapping to the cone $\partial M^{in} \times [r , +\infty)$ and with boundary conditions the concave end $\{ r \} \times \partial M^{in}$ and a collection of Lagrangians on which $\lambda$ vanishes identically.  Applying Stokes' theorem, and using the fact that $H | \partial M^{in} \times [r , +\infty)$ achieves its minimum on the boundary, we find that
\begin{equation} 0 <  \int_{\partial_{r} \Sigma} \lambda \circ( du -  X \otimes (wd\tau + \beta)) =  \int_{\partial_{r} \Sigma} (\lambda \circ J_{z}) \circ ( du -  X \otimes (wd\tau + \beta)) \circ j.   \end{equation}
where $\partial_{r} \Sigma$ is the inverse image of  $\partial M^{in} \times \{ r \}$.  Since the restriction of $X$ to  $\{ r \} \times \partial M^{in}$ is a multiple of the Reeb flow, $\lambda(J_{z} X)$ vanishes, so we conclude that
\begin{equation} 0 <  \int_{\partial_{r} \Sigma} \lambda \circ J_{z} \circ du  \circ j.   \end{equation}
On the other hand, if $\xi$ is a tangent vector compatible with the natural orientation of $\partial_{r} \Sigma$, $j \xi$ is inward pointing along $\partial S$, so that $du( j \xi)$ points towards the infinite part of the cone.  The condition that $J_{z}$ be of contact type implies that
\begin{equation} \lambda  ( J_{z}\circ  du \circ j \xi) \leq 0, \end{equation}
yielding a contradiction.
\end{proof}
Together with Lemma \ref{lem:action_proper}, this result implies that for each chord $x_1$, there is a compact subset of $M$ containing the union of all the moduli spaces $\Disc(\cdot;x_1)$.  Applying the usual Gromov compactness and gluing theory results to this moduli space, we conclude:
\begin{cor}
 \label{cor:compactification_strip_manifold}
For each chord $x_1$, the moduli space $\Discbar(x_0;x_1)$ is empty for all but finitely many choices of $x_0$, and is a compact manifold with boundary of dimension $|x_0|- |x_1| -1$ whenever $I_t$ is a generic family of almost complex structure.  Moreover, the boundary is covered by the closure of the images of the natural inclusions
\begin{equation*}   \Disc(x_0; y) \times   \Disc(y; x_1) \to     \Discbar(x_0; x_1)  \end{equation*} 
\end{cor}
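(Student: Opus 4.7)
The plan is to deduce this compactification statement by combining the two a priori bounds established in Lemmas \ref{lem:action_proper} and \ref{lem:gromov_compactness} with the standard Gromov compactness/gluing package for Floer strips, which now applies because those lemmas reduce the problem to one taking place in a compact subset of $M$ with only finitely many possible asymptotic data.

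First I would establish the finiteness statement. By Lemma \ref{lem:gromov_compactness}, any nonempty $\Discbar(x_0;x_1)$ forces $\Action(x_0) \geq \Action(x_1)$, so the set of possible $x_0$ lies in the preimage $\Action^{-1}([\Action(x_1),\infty))$. Lemma \ref{lem:action_proper} asserts that $\Action$ is proper, so this preimage is compact; combined with the non-degeneracy hypothesis \eqref{eq:non-degenerate_chord}, which makes $\Chord(L_0,L_1)$ discrete, we conclude that only finitely many $x_0$ can contribute.

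Next I would set up compactness. Fixing $x_0$ and $x_1$, Lemma \ref{lem:gromov_compactness} provides a compact subset $K \subset M$ containing every solution of Floer's equation \eqref{eq:dbar_strip} with the prescribed asymptotics, and an a priori energy bound via the action difference. Together with the non-degeneracy of all chords (which forces only finitely many intermediate $y$ to appear in any broken configuration, again by action monotonicity and properness) this puts us in the classical setting: a sequence of solutions either converges smoothly in the interior, or exhibits bubbling, breaking, or escape to infinity. Bubbling is ruled out by exactness of the Lagrangians, and escape to infinity is ruled out by the confinement lemma; what remains is standard convergence to a broken trajectory whose pieces lie in some $\Disc(y_{k};y_{k+1})$. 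This gives compactness of $\Discbar(x_0;x_1)$ and identifies the ideal boundary strata as the stated fibre products.

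For the manifold-with-boundary structure and the dimension, I would invoke standard transversality for the perturbed Cauchy-Riemann operator: generic choice of the family $I_t$ makes every stratum regular, so each $\Disct(x_0;x_1)$ is a smooth manifold of dimension $|x_0|-|x_1|$ (the Maslov/Fredholm index computation for strips with non-degenerate Hamiltonian chord asymptotics, cf. Section (11h) of \cite{seidel-book}); quotienting by the free $\bR$-translation on $\Disc(x_0;x_1)$ drops this by one when nonempty, giving $|x_0|-|x_1|-1$. The gluing theorem for pairs of broken Floer strips at a non-degenerate asymptotic chord then upgrades the stratum-wise description to a collar neighbourhood, producing the topological manifold-with-boundary structure of $\Discbar(x_0;x_1)$ with boundary covered by the inclusions of the codimension-one strata $\Disc(x_0;y) \times \Disc(y;x_1)$.

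The main obstacle, and what distinguishes this from the closed case, is the geometric confinement step — but that has already been addressed in Lemma \ref{lem:gromov_compactness}; once one accepts that result and the properness of the action, the remaining work is entirely standard and parallel to the Morse/Floer trajectory analysis recorded in Proposition \ref{prop:moduli_spaces_strips_manifold}, so no new analytic difficulties appear at this stage.
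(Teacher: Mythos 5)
Your proposal follows the same route as the paper: combine Lemma \ref{lem:gromov_compactness} (action monotonicity and confinement to a compact subset) with Lemma \ref{lem:action_proper} (properness of $\Action$) to reduce to the classical Floer-trajectory setting, then invoke standard Gromov compactness, transversality and gluing. The paper compresses this to a single sentence before stating the corollary; your version simply spells out the intermediate steps, and they match. One micro-point worth tightening if you write this up: properness of $\Action$ literally says that preimages of \emph{compact} sets are compact, whereas you apply it to $\Action^{-1}([\Action(x_1),\infty))$. This is harmless because Equation \eqref{eq:action_formula_cylinder} shows $\Action$ is bounded above (the $-r^2$ term dominates on the cone, and there are finitely many chords in $M^{in}$), so $[\Action(x_1),\infty)$ may be replaced by a compact interval; alternatively, the proof of Lemma \ref{lem:action_proper} establishes directly that sequences of chords escaping every compact set have action tending to $-\infty$.
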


From now on, we shall fix an almost complex structure $I_t$ for which the conclusion of this Corollary holds.
\subsection{The wrapped Floer complex}
We define the graded vector space underlying the Floer complex to be the direct sum
\begin{equation} \label{eq:floer_complex} CW^{*}_{b}(L_0,L_1) = \bigoplus_{x \in \Chord(L_0,L_1)} |\ro_{x}| .\end{equation}
Here, $\ro_x$ is a certain real vector space of rank $1$ associated to every chord via a construction briefly reviewed in Appendix \ref{sec:orientations}, and  $|\ro_{x}|$ is generated by the two possible orientations on it.  Unless the reader wants to check the signs, there is no harm in assuming that we are simply taking the vector space freely generated by the set of chords.

The differential is then a count of solutions to Equation \eqref{eq:dbar_strip}.  More precisely, whenever $|x_1| = |x_0| +1$, every element $u$ of the moduli space $\Disc(x_0,x_1)$ is rigid, and defines an isomorphism 
\begin{equation*}
  \ro_{x_1} \to \ro_{x_0}
\end{equation*}
as defined in  Section \ref{sec:signs}. In particular, we may induce an orientation of $\ro_{x_0}$ from one on $   \ro_{x_1} $, and the associated map on orientation lines is denoted $\mu_u$.  We define
\begin{align}
  \label{eq:wrapped_differential}
 \mu^{\F}_1 \co  CW^{i}_{b}(L_0,L_1)  & \to CW^{i+1}_{b}(L_0,L_1)  \\
[x_1] & \mapsto (-1)^i \sum_{u} \mu_u([x_1] ).
\end{align}
Ignoring signs, we are indeed simply counting elements of the moduli spaces $\Disc(x_0;x_1) $.   The proof that this count gives a well-defined differential is standard.  The only possibly new phenomenon is that Corollary \ref{cor:compactification_strip_manifold} implies that each chord can be the input of only finitely many solutions to \eqref{eq:dbar_strip}, which guarantees that the image of the corresponding generator is a sum of only finitely many terms.

Let us note that the graded vector space underlying $  CW^{*}_{b}(L_0,L_1)  $ depends on $L_0$, $L_1$, $H$ and $\omega$, while the differential also depends on $I_{t}$.  We write
\begin{equation*}
   CW^{*}_{b}(L_0,L_1; \omega, H, I_{t}) 
\end{equation*}
when the distinction is important as shall be the case in the next result. 
\begin{lem}
If $\psi \co M \to M$ satisfies $\psi^{*}(\omega) = \rho \omega$ for some non-zero constant $\rho$, then we have a canonical isomorphism
\begin{equation} \label{eq:isomorphic_complexes_rescale}
   CW(\psi) \co   CW^{*}_{b}(L_0,L_1; \omega, H, I_{t})  \cong   CW^{*}_{b}\left(\psi (L_0), \psi( L_1); \omega, \frac{H}{\rho}\circ  \psi , \psi^{*} I_{t}\right) 
\end{equation}
\end{lem}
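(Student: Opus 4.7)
The plan is to exhibit $CW(\psi)$ explicitly on generators and verify that it is a chain map, the key point being that the conformal symplectomorphism $\psi$ intertwines every piece of the Floer package so long as the Hamiltonian and almost complex structure on the two sides are related as in the statement.

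First I would establish the correspondence at the level of chords. A direct computation from $\psi^{*}\omega = \rho\omega$ and $dK = (1/\rho)\psi^{*}dH$, where $K := (H/\rho)\circ \psi$, shows that the Hamiltonian vector field $X_{K}$ satisfies the naturality relation $d\psi\circ X_{K} = X_{H}\circ\psi$, i.e.\ $X_{K} = \psi^{*} X_{H}$. Consequently, the Hamiltonian flows are conjugate by $\psi$, and composing a time-$1$ chord with $\psi^{\pm 1}$ yields a bijection between the chord set $\Chord(L_{0},L_{1})$ on one side and the corresponding chord set for $(\psi(L_{0}),\psi(L_{1}))$ and Hamiltonian $K$ on the other, compatible with the boundary conditions because $\psi$ by definition sends $L_{i}$ to $\psi(L_{i})$.

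Second I would check the correspondence of moduli spaces. The almost complex structure $\psi^{*}I_{t} := d\psi^{-1}\circ I_{t}\circ d\psi$ is precisely the unique one making $\psi$ a biholomorphism, and combined with the intertwining of Hamiltonian vector fields above this implies that the perturbed Cauchy–Riemann equation \eqref{eq:dbar_strip} transforms into itself under the substitution $u \leftrightarrow \psi^{\pm 1}\circ u$. In particular the moduli spaces $\Disc(x_{0};x_{1})$ on the two sides are identified, along with their Gromov compactifications; the action functional $\Action$ likewise transforms correctly once we declare the primitives $f_{\psi(L_{i})}$ to be $\rho^{-1} f_{L_{i}}\circ \psi^{-1}$ (so that the $\lambda$-term rescales by $\rho$, matching the $H/\rho$ in the action integrand).

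Finally I would transport the brane structure—grading and relative $\Pin$ structure—on each $L_{i}$ to $\psi(L_{i})$ via $\psi$, using that $\psi$ is a diffeomorphism and therefore preserves the Maslov index as well as the Cauchy–Riemann theory on the pulled-back tangent bundles used to define the orientation lines $\ro_{x}$. This gives canonical identifications $|\ro_{x}| \cong |\ro_{\psi\circ x}|$ compatible with the rigid Floer strips, so the map $CW(\psi)$ induced on \eqref{eq:floer_complex} intertwines the signed differential \eqref{eq:wrapped_differential}. The main obstacle is routine bookkeeping: one must check that the conformal factor $\rho$ enters consistently into the Hamiltonian vector field, the primitive $f_{L}$, and the action filtration, while the topological data governing gradings and $\Pin$ structures are transported untouched by $\psi$. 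No new analysis is required beyond that already used to define the Floer complex.
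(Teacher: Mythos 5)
Your proof is correct and follows essentially the same route as the paper: both hinge on the observation that the Hamiltonian vector field of $\frac{H}{\rho}\circ\psi$ with respect to $\omega$ equals $\psi^{*}X_{H}$, so that composition with $\psi$ intertwines chords and Floer strips; the paper merely packages this as a tautological isomorphism for an arbitrary diffeomorphism (replacing $\omega$ by $\psi^{*}\omega$) followed by the observation that the two Hamiltonian flows coincide. One small slip in your side remark on actions: with $\psi^{*}\lambda = \rho\lambda$ the correct primitive is $f_{\psi(L_i)} = \rho\, f_{L_i}\circ\psi^{-1}$ (not $\rho^{-1}f_{L_i}\circ\psi^{-1}$), but this plays no role in establishing the chain isomorphism.
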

\begin{proof}
For any diffeomorphism $\psi$, we obtain an isomorphism of chain complexes
\begin{equation*}
       CW^{*}_{b}(L_0,L_1; \omega, H, I_{t})  \cong  CW^{*}_{b}\left(\psi (L_0), \psi( L_1); \psi^{*} \omega, H \circ \psi , \psi^{*} I_{t}\right) 
\end{equation*}
by composing every chord from $L_0$ and $L_1$ with $\psi$ to obtain a chord from $ \psi (L_0)$ to $ \psi (L_1) $, and every solution to Equation \eqref{eq:dbar_strip} with $\psi$ to obtain an analogous solution for the family of almost complex structure $  \psi^{*} I_{t} $.  The property that $\psi$ rescale $\omega$ implies that the Hamiltonian flow of $H \circ \psi$ with respect to $  \psi^{*} \omega $ agrees with the flow of $ \frac{H}{\rho}\circ  \psi  $ with respect to $\omega$.  Since only the Hamiltonian flow appears in Equation \eqref{eq:dbar_strip}, we obtain an identification
\begin{equation*}
 CW^{*}_{b}(\psi( L_0), \psi (L_1); \psi^{*} \omega, H \circ \psi , \psi^{*} I_{t})  \cong  CW^{*}_{b}\left(\psi( L_0), \psi( L_1) ; \omega, \frac{H}{\rho}\circ  \psi , \psi^{*} I_{t}\right) 
\end{equation*}
which proves the desired result.
\end{proof}
From now on, we define
\begin{equation*}
    CW^{*}_{b}(\psi (L_0), \psi( L_1)) \equiv CW^{*}_{b}\left(\psi (L_0), \psi (L_1) ; \omega, \frac{H}{\rho}\circ  \psi , \psi^{*} I_{t}\right) .
\end{equation*}

\subsection{Composition in the wrapped Fukaya category} \label{sec:comp-wrapp-fukaya}
Given a triple of Lagrangians $L_0$, $L_1$ and $L_2$, we  shall define a map
\begin{equation} \label{eq:pre-multiplication}
\mu^{\psi^{2}}_{2} \co CW^{*}_{b}(L_1, L_2)  \otimes CW^{*}_{b}(L_0,L_1)  \to   CW^{*}_{b}\left( \psi^{2} (L_0), \psi^{2}( L_2) \right),
\end{equation}
where $\psi^{2}$ is the time $\log(2)$ negative Liouville flow. After composition with the inverse of the isomorphism of Equation \eqref{eq:isomorphic_complexes_rescale}, we obtain the product
\begin{equation}
 \label{eq:cup_product}
\mu^{\F}_2 \co CW^{*}_{b}(L_1, L_2)  \otimes CW^{*}_{b}(L_0,L_1)  \to   CW^{*}_{b}( L_0,  L_2)
\end{equation}
in the wrapped Fukaya category.
\begin{figure}
  \centering
   \includegraphics{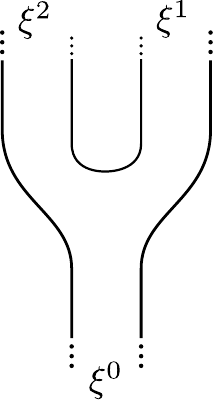}
  \caption{}
  \label{fig:disc_2_inputs}
\end{figure}
The map \eqref{eq:pre-multiplication} shall count solutions to a Cauchy-Riemann equation
\begin{equation}
  \label{eq:dbar_pair_pants}
  \left( du - X_S \otimes \alpha_S \right)^{0,1} = 0.
\end{equation}
whose source is the surface $S$ obtained by removing $3$ points  $(\xi^0,\xi^1,\xi^2)$ from the boundary of $D^2$ (see Figure \ref{fig:disc_2_inputs}).  In the above equation, $\alpha_S$ is a closed $1$ form on $S$ while $X_S$ is the Hamiltonian vector field of a function $H_{S}$ on $M$ which depends on $S$.    The most important condition on these data is the requirement that $H_{S}$ come from a map
 \begin{equation*}
    H_{S} \co S \to \sH(M) 
  \end{equation*}
and restrict to $H$ near  $\xi^1$ and $\xi^{2}$  and to  $\frac{H}{4} \circ \psi^{2}$ near $\xi^0$.  To see that this makes sense, we note that $\frac{H}{4} \circ \psi^{2}$ indeed lies in $\sH(M)$ since $H$ agrees with $r^2$ on the cylindrical end.

To specify the remaining data required for Equation \eqref{eq:dbar_pair_pants},  we choose strip-like ends for $S$, i.e. we write  $Z_+$ and $Z_-$ for the positive and negative half-strips  in $Z$ and choose embeddings 
\begin{align*}
\epsilon^{0} \co Z_- & \to S \\
 \epsilon^k \co Z_+ & \to S \textrm{ if k=1,2} 
\end{align*}
which map $\partial Z_{\pm}$ to $\partial S$ and converge to the respective marked points $\xi^k$.

The closed $1$-form $\alpha_S$ is required to vanish on $\partial S$, and to satisfy
\begin{align*}
{\epsilon^{0}}^{*}(\alpha_S) & = 2 d \tau \\
{\epsilon^k}^{*} (\alpha_S) & =  d \tau \textrm{ if k=1,2}.
\end{align*}
In addition, we choose a family of almost complex structures
\begin{align*}
  I_{S} & \co S  \to \sJ(M)
\end{align*}
whose compositions with $\epsilon^k$ agrees with $I_{t}$ if $k = 1,2$, and with $(\psi^{2})^{*}I_{t}$ if $k=0$.

We would like the Lagrangian boundary conditions to be given by $(L_0, L_1)$ near $\xi^{1}$, $(L_1, L_2)$ near $\xi^{2}$,  and $(\psi^{2}(L_0), \psi^{2}( L_2) )$ near $\xi^0$.  Along the two segments of $ \partial S$ converging to $\xi^0$, we cannot therefore have a constant Lagrangian condition, but we must interpolate between a Lagrangian and its image under $\psi^2$.  Technically, this puts us in the framework of \emph{moving Lagrangian boundary conditions} (see Section (8k) of \cite{seidel-book}).    We shall choose the simplest such moving boundary condition by fixing  a map $\rho_{S}$ from the boundary of $D^2$ to the interval  $[1,2]$   such that
\begin{equation}
  \label{eq:condition_arc_labelling}
  \parbox{36em}{$\rho_S(z) \equiv 1$ if $z$ is near $\xi^{1}$ or  $\xi^{2}$ and $\rho_S(z) \equiv 2$ if $z$ is near $\xi^0$.} 
\end{equation}

\begin{defin}
 The moduli space $\Disc_{2}( x_0;x_1,x_2)$ is the space of solutions to Equation \eqref{eq:dbar_pair_pants} with boundary conditions
\begin{equation}
  \label{eq:boundary_disc_2_punctures}
  \begin{cases}  u(z)  \in \psi^{\rho_{S}(z)} (L_1) & \textrm{if $z \in \partial S$ lies between $\xi_1$ and $\xi_2$} \\
  u(z)  \in \psi^{\rho_{S}(z)} (L_2) & \textrm{if $z \in \partial S$ lies between $\xi_2$ and $\xi_0$} \\
  u(z)  \in \psi^{\rho_{S}(z)} (L_0) & \textrm{if $z \in \partial S$ lies between $\xi_1$ and $\xi_0$}
 \end{cases}
\end{equation}
and such that  the image of $u$ converges to $x_{1}$ and $x_2$ at the corresponding incoming strip-like ends, and to $  \psi^{2}(x_0)$  at the outgoing strip-like end.
\end{defin}

By construction, we have ensured that the pullback of Equation \eqref{eq:dbar_pair_pants} by the strip-like end $\xi^k$ is given by
\begin{align*}
  (du - X \otimes d\tau)^{0,1} = 0 & \textrm{ with respect to $I_t$ if $k=1,2$} \\
   (du - 2 X_{\frac{H}{4} \circ \psi^{2}} \otimes d\tau)^{0,1} = 0 & \textrm{ with respect to $(\psi^{2})^{*}I_{t}$ if $k=0$}
\end{align*}
with Lagrangian boundary conditions  $(L_0, L_1)$ near $\xi^{1}$, $(L_1, L_2)$ near $\xi^{2}$,  and $(\psi^{2}(L_0) , \psi^{2}( L_2) )$ near $\xi^0$.  For the inputs, we exactly recover Equation \eqref{eq:dbar_strip}, while for the output, we recover the same Equation, but for the Hamiltonian $ \frac{H}{2} \circ \psi^{2} $.  We conclude that the Gromov bordification of the moduli space $\Disc_{2}( x_0;x_1,x_2)$  is obtained by adding the strata
\begin{align}
 \label{eq:cover_boundary_disc_2}
& \coprod_{y \in \Chord(L_0,L_1)}  \Disc_{2}( x_0;y,x_2) \times \Discbar(y;x_1)   \\
& \coprod_{y \in \Chord(L_1,L_2)}  \Disc_{2}( x_0;x_1,y) \times \Discbar(y;x_2)   \\  \label{eq:cover_boundary_disc_2-end}
& \coprod_{y \in \Chord(L_0,L_2)}  \Discbar( x_0;  y) \times \Disc_{2}( y;x_1,x_2)    \end{align}
corresponding to the breaking of a holomorphic strip at any of the ends. The factor  $ \Discbar( x_0;  y) $ in the third stratum is obtained by applying the inverse of $\psi^{2}$ to the moduli space $  \Discbar(\psi^{2} (x_0) ; \psi^{2} (y)) $  which would appear naturally from the point of view of breakings of holomorphic curves. 
\begin{lem} \label{lem:regularity_compactness_2_inputs}
For a fixed pair $(x_1,x_2)$, the moduli space $ \Disc_{2}( x_0;x_1,x_2) $  is empty for all but finitely many choices of chords $x_0$.  For a generic family of almost complex structures $I_S$ and Hamiltonians $H_{S}$,  $\Discbar_{2}( x_0;x_1,x_2)$ is a compact manifold of dimension
\begin{equation*}
  |x_0| - |x_1| - |x_2|
\end{equation*}
whose boundary is covered by the codimension $1$ strata listed in Equations  \eqref{eq:cover_boundary_disc_2}-\eqref{eq:cover_boundary_disc_2-end}. \noproof
\end{lem}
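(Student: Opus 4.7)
The proof will follow the familiar pattern of the previous lemma (Lemma \ref{lem:gromov_compactness} and Corollary \ref{cor:compactification_strip_manifold}), with the main new ingredients being an integrated maximum principle for the non-translation-invariant equation \eqref{eq:dbar_pair_pants} and a standard transversality/index argument. First I would establish the a priori $C^0$ bound. An energy estimate using the nonnegativity of $\alpha_{S}$ and of the relevant Hamiltonians, together with the boundary condition $\alpha_{S}|_{\partial S}=0$ and the fact that each Lagrangian $\psi^{\rho_{S}(z)}(L_k)$ has a primitive for $\lambda$ whose values are controlled in terms of $\rho_{S}$, shows that
\begin{equation*}
\Action(\psi^{2}(x_0))\leq \Action(x_1)+\Action(x_2)+C,
\end{equation*}
with $C$ a constant depending only on $H_{S}$, $\alpha_S$ and the primitives $f_{L_k}$. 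Combined with Lemma \ref{lem:action_proper}, this gives finiteness of the set of $x_0$ for which $\Disc_{2}(x_0;x_1,x_2)$ is non-empty.

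Next I would prove that all such solutions remain in a compact subset of $M$. The argument is the same as in Lemma \ref{lem:gromov_compactness}: on any hypersurface $\partial M^{in}\times\{r\}$ lying beyond all $x_0,x_1,x_2$, the restriction of $\lambda$ to any Lagrangian $\psi^{\rho_{S}(z)}(L_k)$ vanishes, while $H_{S}=r^{2}$ attains its minimum on $\{r\}\times\partial M^{in}$ (so $H_S$ is subharmonic in the radial direction). Applying Stokes to $u^{*}\omega$ on the inverse image $\Sigma_r$ of $\partial M^{in}\times[r,\infty)$ and using $\lambda\circ J_{z}=dr$ together with $\lambda(J_zX_{H_S})=0$ on $\{r\}\times\partial M^{in}$ yields exactly the same contradiction as before; the fact that $X_{S}=X_{H_{S}}$ now depends on the point of $S$ is harmless because $\alpha_S$ is a scalar multiple on each fibre.

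Having these two ingredients, standard Floer theory applies. For a generic pair $(H_{S},I_{S})$ satisfying the prescribed asymptotic conditions, the linearised operator is surjective at every solution, so $\Disc_{2}(x_0;x_1,x_2)$ is a smooth manifold of the stated index; the dimension formula $|x_0|-|x_1|-|x_2|$ comes from the usual computation for a disc with one negative and two positive punctures, noting that no $-1$ appears because, unlike in the strip case, we do not quotient by any $\mathbb{R}$-action. Gromov compactness applied inside the compact region found above yields $\Discbar_{2}(x_0;x_1,x_2)$; bubbling of discs is excluded by exactness of the Lagrangians, and sphere bubbling by exactness of $\omega$, so the only possible degenerations are breakings of a Floer strip at one of the three punctures, giving precisely the strata \eqref{eq:cover_boundary_disc_2}--\eqref{eq:cover_boundary_disc_2-end}. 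The third stratum appears after conjugating by $\psi^{2}$, explaining the reason the author notes in the statement.

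The main obstacle is the maximum principle step: one must check carefully that the domain-dependence of $H_S$ and the presence of the moving Lagrangian labels $\psi^{\rho_{S}(z)}(L_k)$ do not spoil the Stokes argument. Once one verifies that $\lambda$ vanishes on each moving Lagrangian and that $X_{H_S}$ remains a positive multiple of the Reeb vector field on the relevant contact slice, everything else is routine.
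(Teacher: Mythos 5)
Your outline follows exactly the route the paper indicates (it leaves the lemma with \texttt{noproof} and refers to Lemma~3.2 of~\cite{generate} for compactness and to~\cite{FHS} for transversality, with the $C^0$ estimate patterned on Lemma~\ref{lem:gromov_compactness}); the Stokes/maximum-principle step, the index count without an $\bR$-quotient, and the identification of the three strata are all correctly reproduced.

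However, your action inequality points the wrong way, and as written it does not yield finiteness. With this paper's conventions, the output (negative end) has the \emph{larger} action: Lemma~\ref{lem:gromov_compactness} gives $\Action(x_0) \geq \Action(x_1)$ for a strip, and the pair-of-pants analogue is
\begin{equation*}
\Action(\psi^{2}(x_0)) \geq \Action(x_1)+\Action(x_2)-C,
\end{equation*}
a \emph{lower} bound on the output action coming from nonnegativity of the geometric energy (with $C$ absorbing the curvature term from the domain-dependent Hamiltonian). This is the bound one needs: by Equation~\eqref{eq:action_formula_cylinder} the action tends to $-\infty$ as chords escape to the cylindrical end, so $\Action$ is automatically bounded above on all of $\Chord(L_0,L_2)$, and an \emph{upper} bound on $\Action(\psi^{2}(x_0))$ of the form you wrote gives no information. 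Finiteness of $\{x_0\}$ follows from the lower bound together with Lemma~\ref{lem:action_proper}. You should also rephrase ``nonnegativity of $\alpha_S$'' --- what is actually used is that $\alpha_S$ is closed, vanishes on $\partial S$, and restricts to $w_k\,d\tau$ with $w_k>0$ at the ends, together with positivity of $H_S$; ``nonnegativity'' is not a meaningful property of a $1$-form in this context. With the inequality reversed and that phrasing tightened, the argument is correct and coincides with the paper's intended proof.
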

The proof of transversality follows from a standard Sard-Smale argument going back to \cite{FHS} in the case of Hamiltonian Floer cohomology.  The proof of compactness relies an argument analogous to that of Lemma \ref{lem:gromov_compactness}, and which is explained in the proof of Lemma 3.2 of \cite{generate}.

Whenever $  |x_0|  = |x_1| + |x_2|$,  $\Disc_{2}(x_0;x_1,x_2)$ consists of finitely many elements, each of which defines an isomorphism
\begin{equation*}
 \ro_{x_2} \otimes \ro_{x_1} \to \ro_{  \psi^{2} (x_0)},
\end{equation*}
coming from  Equation \eqref{eq:orientation_higher_product}.  Writing $\mu_u$  as before for the map induced on orientation lines we define a  product on the wrapped Floer complex by adding the appropriate signed contribution of each element of  $ \Disc_{2}(x_0;x_1,x_2) $:
 \begin{align*}
\mu^{\psi^2}_{2} \co CW^{*}_{b}(L_1, L_2)  \otimes CW^{*}_{b}(L_0,L_1 ) & \to   CW^{*}_{b}( \psi^{2} (L_0) , \psi^{2} (L_2))  \\
\mu^{\psi^2}_2([x_2],[x_1]) & = \sum_{ \substack{ |x_0|  = |x_1| + |x_2| \\  u \in \Disc_{2}( x_0;x_1,x_2)} } (-1)^{|x_1|}  \mu_u([x_2],[x_1]).
\end{align*}

\subsection{Failure of associativity}
Let us assign  to each end $\xi^i$ of a punctured disc a \emph{weight} $w_i$ which is a positive real number, with the convention that any $\dbar$ operator we shall consider on such a disc must pull back, under a strip-like end near $\xi^i$, to Equation \eqref{eq:dbar_strip} up to applying $\psi^{w_i}$.  In the case of a disc with $2$ incoming ends, the inputs have weights $1$ while the output has weight $2$.    If we consider the pull back by $\psi^2$ of all the data used to define the moduli spaces $\Disc_{2}( x_0;x_1,x_2)  $ in the previous section, we obtain a $\dbar$ operator such that the weights are now equal to $2$ at the inputs, and $4$ at the output.   The space of solutions of the associated Cauchy-Riemann equation is a moduli space $\Disc_{2}(\psi^{2} x_0; \psi^{2} x_1, \psi^{2} x_2)$ the count of whose elements defines a map
\begin{equation} \label{eq:product_rescaled_4}
\mu^{\psi^4}_{2} \co CW^{*}_{b}(\psi^{2} (L_1),  \psi^{2} (L_2)) \otimes CW^{*}_{b}( \psi^{2}( L_0), \psi^{2}( L_1 ))  \to   CW^{*}_{b}( \psi^{4}( L_0), \psi^{4} (L_2))   
\end{equation}
which exactly agrees with $\mu^{\F}_2$ if we pre-compose with $ CW(\psi^{2}) $ on both factors the source and post-compose with $  CW(\psi^{1/4}) $.

In the next few sections, we shall prove that $\mu^{\F}_2$ induces an associative product on cohomology by constructing a homotopy between the two possible compositions around the diagram
\tiny
\begin{equation} \label{eq:htpy_associativity_diagram}
  \xymatrix{  CW^{*}_{b}(L_2, L_3) \otimes CW^{*}_{b}(L_1, L_2)  \otimes CW^{*}_{b}(L_0,L_1) \ar[d]^{CW(\psi^{2}) \otimes\mu^{\psi^2}_{2} } \ar[r]^{ \mu^{\psi^2}_{2}   \otimes CW(\psi^{2})}   &   CW^{*}_{b}(\psi^{2} (L_1), \psi^{2} (L_3)) \otimes  CW^{*}_{b}(\psi^{2} ( L_0) ,\psi^{2} ( L_1))  \ar[d]^{ \mu^{\psi^4}_{2} }   \\
 CW^{*}_{b}(\psi^{2} (L_2), \psi^{2}( L_3)) \otimes  CW^{*}_{b}(\psi^{2} ( L_0) ,\psi^{2}  (L_2))   \ar[r]^{  \mu^{\psi^4}_{2} } &  CW^{*}_{b}(\psi^{4}( L_0), \psi^{4} (L_3)) \ar[d]^{\cong}. \\
 & CW^{*}_{b}(L_0, L_3) }
\end{equation}

\normalsize
\comment{
\begin{equation} \label{eq:htpy_associativity_diagram}
  \xymatrix{   CW^{*}_{b}(L_2, L_3) \otimes  CW^{*}_{b}( L_1,  L_2) \otimes CW^{*}_{b}(\psi^{2} ( L_0),\psi^{2} ( L_1)) \ar[r]^{ \mu^{\psi^2}_{2} \otimes \id } & CW^{*}_{b}(\psi^{2} (L_1), \psi^{2} (L_3)) \otimes  CW^{*}_{b}(\psi^{2} ( L_0) ,\psi^{2} ( L_1))  \ar[d]^{ \mu^{\psi^2}_{4}} \\
CW^{*}_{b}(L_2, L_3) \otimes CW^{*}_{b}(L_1, L_2)  \otimes CW^{*}_{b}(L_0,L_1) \ar[d]^{CW(\psi^{2}) \otimes \id^{\otimes 2} }  \ar[u]^{ \id^{\otimes 2} \otimes CW(\psi^{2})}   &  CW^{*}_{b}(\psi^{4}( L_0), \psi^{4} (L_3)).  \\
CW^{*}_{b} (\psi^{2} (L_2), \psi^{2} (L_3)) \otimes  CW^{*}_{b}( L_1,  L_2) \otimes CW^{*}_{b}(  L_0,  L_1) \ar[r]^{\id \otimes  \mu^{\psi^2}_{2} } &  CW^{*}_{b}(\psi^{2} (L_2), \psi^{2}( L_3)) \otimes  CW^{*}_{b}(\psi^{2} ( L_0) ,\psi^{2}  (L_2))   \ar[u]^{  \mu^{\psi^2}_{4} } . } 
\end{equation}}

It is well known that the product $\mu^{\F}_{2}$ is not in general associative  at the chain level, and that such a homotopy should come from a moduli space of maps whose sources are $4$-punctured discs equipped with an arbitrary conformal structure.   We write $\Disc_{3}$ for this moduli space, and recall that $\Discbar_{3}$  is an interval whose two endpoints are nodal discs obtained by gluing, in the two possible different ways as represented in the outermost surfaces in Figure \ref{fig:Three_puncture}, two discs each with two incoming ends and one outgoing one.

\begin{figure}
  \centering
    \includegraphics{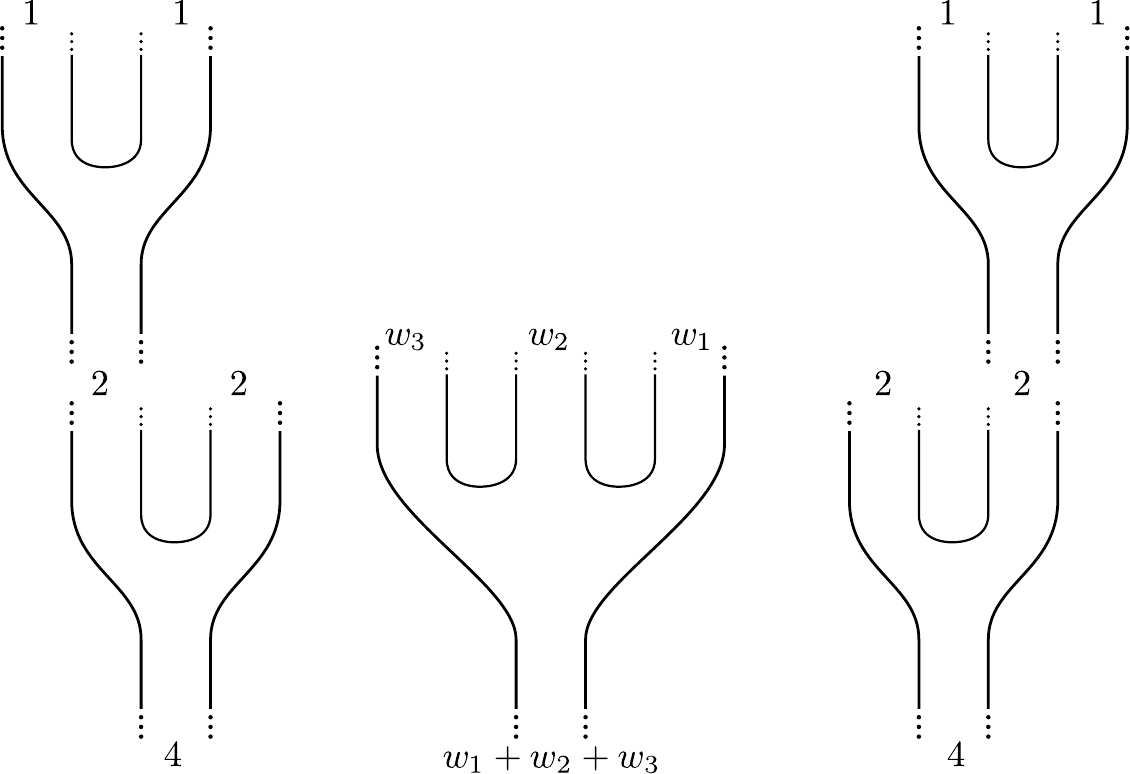}
  \caption{ }
  \label{fig:Three_puncture}
\end{figure}

In order for this count to indeed define a homotopy, whatever equation we define on the moduli space $\Discbar_{3}$ is usually required to restrict on the boundary strata to Equation \eqref{eq:dbar_pair_pants} on each component.  In particular, the Cauchy-Riemann equation imposed on a disc whose conformal equivalence class is close to the boundary of $\Discbar_{3}$ should be obtained by gluing the $\dbar$ operators associated to Equation \eqref{eq:dbar_pair_pants} at the node.  This only makes sense if the restriction of these $\dbar$ operators  to the two strip-like ends at the node agree, which is not the case for us, as even the Lagrangian boundary conditions do not agree.  However, they agree up to applying $  \psi^{2} $, so we may glue two solutions to Equation  \eqref{eq:dbar_pair_pants} after applying $  \psi^{2} $ to one of them.    This is the main reason for stating associativity in term of the Diagram \eqref{eq:htpy_associativity_diagram}.

As a final observation, we note that the weights on the inputs of a $3$-punctured disc must be allowed to vary with its modulus, for they are given by $(1,1,2)$ at one of the endpoints of $\Discbar_{3}$ and $(2,1,1)$ at the other.   We could now choose auxiliary data of families  of Hamiltonians and almost complex structures on $M$, and of $1$-forms on elements of  $\Discbar_{3}$ which would define an operation
\begin{equation*}
    CW^{*}_{b}(L_2, L_3) \otimes CW^{*}_{b}(L_1, L_2)  \otimes CW^{*}_{b}(L_0,L_1)  \to  CW^{*}_{b}( L_0, L_3)
\end{equation*}
providing the homotopy in Diagram \eqref{eq:htpy_associativity_diagram}.  As we shall have to repeat this procedure for an arbitrary number of inputs, and there is no simplifying feature in having only three, we proceed to give the general construction.
\subsection{Floer data for the $A_{\infty}$ structure} 
We write $ \Disc_{d} $ for the moduli space of abstract discs with one negative puncture denoted $\xi^{0}$ and  $d$ positive punctures  denoted $\{ \xi^{k} \}_{k=1}^{d}$ which are ordered clockwise.  We write $  \Discbar_{d} $  for the Deligne-Mumford compactification of this moduli space, and assume that a \emph{universal and consistent} choice of strip-like end has been chosen as in Section (9g) of \cite{seidel-book}.  This means that we have, for each surface $S$ and each puncture, a map
\begin{equation*} \epsilon^k \co Z_{\pm}  \to S \end{equation*}
whose source is $Z_-$ if $k=0$ and $Z_+$ otherwise, and that such a choice varies smoothly with the modulus of the surface $S$ in the interior of the moduli space.  Moreover,  near each boundary stratum $\sigma$ of $  \Discbar_{d} $ the strip-like ends are obtained by gluing in the following sense:  if we write $\sigma = \Disc_{d_1} \times \cdots \times  \Disc_{d_j}$, then gluing the strip-like ends chosen on these lower dimensional moduli spaces defines an embedding
\begin{equation*}\label{eq:corner_chart_moduli_space}
  \sigma \times [1,+\infty)^{j-1} \to   \Disc_{d}.
\end{equation*}
A surface in the image of this gluing map is by definition covered by patches coming from surfaces in one of the factors of the product decomposition of $\sigma$.  In particular, each end of such a surface obtained by gluing comes equipped with strip-like ends induced from the choices of strip-like ends on the lower dimensional moduli spaces.   Consistency of the choice of strip-like ends is the requirement that our choice on $  \Disc_{d} $  agree with this fixed choice in some neighbourhood of $\sigma$.
\begin{defin} \label{def:floer_datum_disc_1_output}
A \emph{Floer datum} $D_{S}$ on a stable disc $S \in \Discbar_{d}$ consists of the following choices:
\begin{enumerate}
\item Weights: A positive integer $w_{k,S}$ assigned to the $k$\th end such that
 \begin{equation*} w_{0,S} = \sum_{1 \leq k \leq d} w_{k,S} \end{equation*}
\item Moving conditions:  A map $ \rho_{S} \co \partial \bar{S} \to [1,+ \infty)$ which agrees with $w_k$ near the $k$\th end.
\item Basic $1$-form: A closed $1$-form $\alpha_{S}$ whose restriction to the boundary vanishes and whose pullback under  $ \epsilon^k$ agrees with $ w_{k,S} d\tau$.
\item  Hamiltonian perturbations:   A map $H_{S} \co S \to \sH(M)$ which agrees with  $ \frac{H \circ \psi^{w_{k,S}}}{w^{2}_{k,S}}$  near the $k$\th end.
\item Almost complex structure:  A map $I_{S} \co S \to \sJ(M)$ whose pullback under $ \epsilon^k $ agrees with  $ (\psi^{w_{k,S}})^{*} I _t$.
\end{enumerate}
\end{defin}
 
If we write $X_{S}$  for the Hamiltonian flow of $H_{S}$, then these data allow us to write down a Cauchy-Riemann equation
\begin{equation}
  \label{eq:dbar_disc_1_output}
\left(du - X_{S} \otimes \alpha_{S}\right)^{0,1} = 0
\end{equation}
where the $(0,1)$ part is taken with respect to $I_{S}$.

The main reason of the long list of conditions in Definition \ref{def:floer_datum_disc_1_output} is the following conclusion
\begin{lem} \label{lem:equation_restricts_to_ends}
 The pullback of Equation \eqref{eq:dbar_disc_1_output} under $\epsilon^{k}$ is given by
\begin{equation}  
\label{eq:dbar_disc_1_output_restrict}
\left(du \circ \epsilon^{k} - X_{ \frac{H \circ \psi^{w_{k,S}}}{w_{k,S}} } \otimes d \tau \right)^{0,1} = 0.
\end{equation}
In particular, it agrees with Equation \eqref{eq:dbar_strip} up to applying  $ \psi^{w_{k,S}} $.  \noproof
\end{lem}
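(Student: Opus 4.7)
The plan is to verify the lemma by a direct substitution of the conditions of Definition \ref{def:floer_datum_disc_1_output} into Equation \eqref{eq:dbar_disc_1_output}, and then to recognise the resulting equation on the end as a conjugate of Equation \eqref{eq:dbar_strip} under $\psi^{w_{k,S}}$.

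First, I would pull Equation \eqref{eq:dbar_disc_1_output} back under $\epsilon^k$ term by term. Condition (3) of Definition \ref{def:floer_datum_disc_1_output} gives $(\epsilon^k)^* \alpha_S = w_{k,S}\, d\tau$; condition (5) says the almost complex structure on the end becomes $(\psi^{w_{k,S}})^* I_t$; and condition (4), combined with the linearity $X_{\lambda F} = \lambda X_F$ of Hamiltonian vector fields in the generating function, gives that $X_S$ pulls back to $X_{(H \circ \psi^{w_{k,S}})/w_{k,S}^2}$. Assembling these and absorbing the factor $w_{k,S}$ from $\alpha_S$ into the Hamiltonian via the same linearity yields Equation \eqref{eq:dbar_disc_1_output_restrict} directly.

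Next, to identify Equation \eqref{eq:dbar_disc_1_output_restrict} with Equation \eqref{eq:dbar_strip} up to applying $\psi^{w_{k,S}}$, I would invoke the rescaling principle already used to establish the isomorphism \eqref{eq:isomorphic_complexes_rescale}. Since the explicit model $\psi^{\rho}(r,m) = (\rho r, m)$ on the cylindrical end yields $(\psi^{w_{k,S}})^* \omega = w_{k,S}\, \omega$, that principle applied with $\rho = w_{k,S}$ says precisely that the Hamiltonian flow of $(H \circ \psi^{w_{k,S}})/w_{k,S}$ with respect to $\omega$ agrees with the flow of $H$ after conjugation by $\psi^{w_{k,S}}$. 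Together with the corresponding identification of almost complex structures $(\psi^{w_{k,S}})^* I_t \leftrightarrow I_t$, this shows that $u \mapsto \psi^{w_{k,S}} \circ u$ sends solutions of Equation \eqref{eq:dbar_disc_1_output_restrict} to solutions of Equation \eqref{eq:dbar_strip}, as required.

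There is no real obstacle here; the entire content is careful bookkeeping of how the weight $w_{k,S}$ is distributed between $\alpha_S$, $H_S$, and $I_S$. The reason condition (4) of Definition \ref{def:floer_datum_disc_1_output} carries a $w_{k,S}^2$ in the denominator (rather than $w_{k,S}$) is precisely that two factors of $w_{k,S}$ must be absorbed simultaneously: one produced by the pullback of $\alpha_S$, and a second by the rescaling $(\psi^{w_{k,S}})^* \omega = w_{k,S}\, \omega$ which enters when translating Hamiltonian flows across the diffeomorphism $\psi^{w_{k,S}}$.
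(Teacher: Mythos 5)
Your proposal is correct, and it is precisely the routine verification the paper has in mind — the lemma is marked \textit{noproof} because the content is exactly the bookkeeping you carry out: pull back the conditions of Definition \ref{def:floer_datum_disc_1_output} along $\epsilon^k$, absorb the factor $w_{k,S}$ from $\alpha_S$ into the Hamiltonian using $X_{\lambda F} = \lambda X_F$, and then invoke the rescaling argument already used for \eqref{eq:isomorphic_complexes_rescale} to recognise the result as \eqref{eq:dbar_strip} conjugated by $\psi^{w_{k,S}}$. Your closing remark explaining why the exponent in $H_S = \tfrac{H\circ\psi^{w_{k,S}}}{w_{k,S}^2}$ must be $2$ (one factor eaten by the $1$-form, one by the conformal rescaling of $\omega$) is a genuinely useful piece of intuition that the paper leaves implicit.
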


At the boundary of the moduli space $   \Discbar_{d}$, we should require that Floer data be given by the choices performed on smaller dimensional moduli spaces.  When $d=3$, we already noted in the previous section that the choice of Floer data at the boundary cannot be given exactly by the Floer data for $   \Disc_{2}$ on each component, since these Floer data cannot be glued.  We shall consider the following notion of equivalence among Floer data which is weaker than equality:
\begin{defin}
 We say that a pair $\left( \rho_{S}^{1}, \alpha_{S}^{1}, H_{S}^{1}, I_{S}^{1} \right) $ and  $\left( \rho_{S}^{2}, \alpha_{S}^{2}, H_{S}^{2}, I_{S}^{2} \right) $ of Floer data on a surface $S$  are \emph{conformally equivalent} if there exists a constant $C$ so that $\rho^{2}_{S}$ and $\alpha_{S}^{2}$ respectively agree with   $C \rho^{1}_{S}$ and $C \alpha_{S}^{1}$, and
\begin{align*}
 I_{S}^{2} & = {\psi^{C}}^{*} I_{S}^{1} \\
H_{S}^{2} & = \frac{H_{S}^{1} \circ \psi^{C}}{C^{2} }.
 \end{align*}
\end{defin}
We have already encountered the idea that rescaling by $\psi^{2}$ in Equation \eqref{eq:product_rescaled_4} gives an identification of moduli spaces.  This idea generalises as follows:
\begin{lem}
Composition with $\psi^{C}$  defines a bijective correspondence between solutions to Equation \eqref{eq:dbar_disc_1_output} for conformally equivalent Floer data. \noproof
\end{lem}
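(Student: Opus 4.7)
My plan is a direct verification that the map $\Psi\co u \mapsto \psi^C\circ u$ restricts to a bijection from solutions for data~$1$ to solutions for data~$2$, with inverse given by post-composition with $\psi^{1/C}$.

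The check splits into three pieces. First, the moving Lagrangian boundary conditions match: using the multiplicative law $\psi^a\circ\psi^b = \psi^{ab}$ together with $\rho^2 = C\rho^1$, the condition $u(z)\in \psi^{\rho^1_S(z)}(L_k)$ is equivalent to $\tilde u(z) \in \psi^{\rho^2_S(z)}(L_k)$. Second, at each strip-like end the asymptotic chord $x_k$ of $u$ is mapped by $\psi^C$ to a time-$1$ chord of the Hamiltonian prescribed by data $2$ at that end, because both the weights and the ambient Hamiltonians rescale by powers of $C$ in exactly the manner dictated by Definition~\ref{def:floer_datum_disc_1_output} (the outgoing weight becomes $Cw_k$ and the Hamiltonian $(H\circ\psi^{w_k})/w_k^2$ becomes $(H\circ\psi^{Cw_k})/(Cw_k)^2$).

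The substantive point is the invariance of the Cauchy-Riemann equation. The foundational identity is that $\psi^C$ is a conformal symplectomorphism, $(\psi^C)^*\omega = C\omega$, from which a standard computation yields, for any function $f$ on $M$,
\[
d\psi^C(p)\cdot X_{(f\circ\psi^C)/C}(p) = X_f(\psi^C(p)).
\]
Taking $f = H^1_S(z)$ and using $H^2_S = (H^1_S\circ\psi^C)/C^2$ together with $\alpha^2_S = C\alpha^1_S$, the Hamiltonian-perturbation term $X_{H^2_S}(\tilde u)\otimes\alpha^2_S$ for $\tilde u$ is precisely the $d\psi^C$-image of $X_{H^1_S}(u)\otimes\alpha^1_S$ for $u$; the extra factor of $C$ from $(f\circ\psi^C)/C$ vs $(H^1_S\circ\psi^C)/C^2$ is exactly absorbed by the factor of $C$ in $\alpha^2_S$. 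The relation $I^2_S = (\psi^C)^* I^1_S$ then ensures that $d\psi^C$ intertwines $I^1_S$ and $I^2_S$, so that $(0,1)$-projections are preserved. Putting this together,
\[
\bigl(du - X_{H^1_S}(u)\otimes\alpha^1_S\bigr)^{0,1,I^1_S} = 0 \iff \bigl(d\tilde u - X_{H^2_S}(\tilde u)\otimes\alpha^2_S\bigr)^{0,1,I^2_S} = 0.
\]

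The only delicate aspect is bookkeeping: tracking how each of the four scaling factors in $\rho$, $H$, $\alpha$, and $I$ is simultaneously forced by the single conformal constant $C$. Once the identity $(\psi^C)^*\omega = C\omega$ is in hand, no further analytic input is required, which is why the author states the lemma without proof.
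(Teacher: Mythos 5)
Your plan is right, and the identity you derive from $(\psi^C)^*\omega = C\omega$ is correct, but its application to the Hamiltonian-perturbation term is backwards. Taking $f = H^1_S$ in the identity gives
\[
d\psi^C(p)\cdot X_{(H^1_S\circ\psi^C)/C}(p) \;=\; X_{H^1_S}\bigl(\psi^C(p)\bigr).
\]
Since $(H^1_S\circ\psi^C)/C = C\,H^2_S$ and $\alpha^2_S = C\alpha^1_S$, this rearranges to
\[
d\psi^C(p)\bigl[ X_{H^2_S}(p)\otimes\alpha^2_S \bigr] \;=\; X_{H^1_S}\bigl(\psi^C(p)\bigr)\otimes\alpha^1_S,
\]
i.e.\ the data-$2$ perturbation at $p$ pushes forward under $d\psi^C$ to the data-$1$ perturbation at $\psi^C(p)$ --- the \emph{opposite} of what you assert. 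Your claim, that $X_{H^2_S}(\tilde u)\otimes\alpha^2_S$ is the $d\psi^C$-image of $X_{H^1_S}(u)\otimes\alpha^1_S$, amounts to $C\,X_{H^2_S}(\psi^C p) = d\psi^C(p)\,X_{H^1_S}(p)$, and by the same identity that would require $H^2_S = H^1_S\circ\psi^{1/C}$ rather than $(H^1_S\circ\psi^C)/C^2$; on the cylindrical end where $H^1_S = r^2$ these two differ by a factor of $C^2$, so the equality you assert simply fails. The almost-complex-structure step has the same reversal: $(\psi^C)^*I^1_S(p) = d\psi^C(p)^{-1}\,I^1_S(\psi^C p)\,d\psi^C(p)$ says precisely that $d\psi^C$ conjugates $I^2_S$ at $p$ to $I^1_S$ at $\psi^C p$, again matching the direction data $2 \to$ data $1$.

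A further symptom worth noticing: once the Hamiltonian and almost-complex-structure bookkeeping is corrected, those two pieces support $u\mapsto\psi^C\circ u$ sending solutions for data $2$ to solutions for data $1$, while the boundary-condition check you do in your first paragraph (from $u(z)\in\psi^{\rho^1_S(z)}(L_k)$ and $\psi^a\circ\psi^b=\psi^{ab}$, one gets $\psi^C u(z)\in\psi^{C\rho^1_S(z)}(L_k)$) points in the reverse direction. Pinning down which way the correspondence actually runs requires being careful about exactly where $\psi^C$ versus $\psi^{1/C}$ enters the definition of conformal equivalence (and, relatedly, in the moving boundary condition). Your write-up does not confront this tension; it silently gets the Hamiltonian step in one direction and the boundary step in the other and declares victory, so the argument as written does not establish the lemma.
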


We can now state the desired compatibility between Floer data at the boundary of the moduli spaces $\Discbar_{d}$:
\begin{defin}
A   \emph{universal and conformally consistent}  choice of Floer data $\bfD_{\mu}$  for the $A_{\infty}$ structure,  is a choice  of Floer data  for every element of  $\Discbar_{d}$ and every integer $d \geq 2$,  which varies smoothly over the interior of the moduli space, whose restriction to a boundary stratum is conformally equivalent to the product of Floer data coming from lower dimensional moduli spaces, and which near such a boundary stratum agrees to infinite order, in the coordinates \eqref{eq:corner_chart_moduli_space}, with the Floer data obtained by gluing.
\end{defin}
Note that conformal consistency fixes, up to a constant depending on the modulus, the choice of Floer data on  $\partial \Discbar_{d}$ once Floer data on each irreducible component has been chosen.  For example, the choice of Floer data for $d=2$  from Section \ref{sec:comp-wrapp-fukaya} determines the Floer data on the boundary of $ \Discbar_{3}$ as discussed in the previous section, where we chose to use the original Floer data on the disc that does not contain the original end, and rescale it by $\psi^{2}$ on the other disc (see Figure \ref{fig:Three_puncture}).  After introducing a perturbations of this Floer data which vanishes to infinite order at the boundary, we extend it from a neighbourhood of the boundary to the remainder of  $\Disc_{3}$.    One may proceed inductively to construct a universal datum $ \bfD_{\mu} $ using the fact that  the boundary of $\Discbar_{d}$ is covered by the images of codimension $1$ inclusions
\begin{equation}
\Discbar_{d_1} \times \Discbar_{d-d_1+1} \to  \partial \Discbar_{d}
\end{equation}
and the contractibility of the space of Floer data on a given surface.  The contractibility of this space also implies that we may extend any Floer data chosen on a given surface to universal data:
\begin{lem} \label{lem:abundance_floer_data}
The restriction map from the space of universal and conformally consistent Floer data to the space of Floer data for a fixed surface $S$ is surjective.  \noproof
\end{lem}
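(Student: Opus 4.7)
The plan is to construct the universal Floer datum by induction on the number $d$ of inputs, exploiting at each stage the contractibility of the space of Floer data with prescribed asymptotic and boundary behaviour. The fixed surface $S$ lives in some $\overline{\Disc}_{d_0}$, and the construction must be arranged so that the $d=d_0$ step hits the prescribed datum on $S$.

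First, I would observe that for any fixed stable disc $S'$ the space of Floer data on $S'$ (with the asymptotics dictated by Definition \ref{def:floer_datum_disc_1_output}) is contractible: the weights are determined by a discrete choice that propagates once $w_{k,S'}$ is fixed at the inputs, the moving condition $\rho_{S'}$ takes values in a contractible interval with fixed germs at the punctures, the space of closed $1$-forms $\alpha_{S'}$ with prescribed pullback under the strip-like ends and vanishing on $\partial S'$ is affine, and the spaces of sections of the bundles with fibres $\sH(M)$ and $\sJ(M)$ are likewise convex (or at worst contractible in the $\sJ(M)$ case) with fixed behaviour on strip-like ends. The same reasoning, applied fibrewise, shows that the sheaf of Floer data over $\overline{\Disc}_d$ has contractible fibres, so any partial section extends.

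Next, I would carry out the inductive construction. For $d=2$, $\overline{\Disc}_2$ is a single point; pick any Floer datum (if $d_0=2$, pick the prescribed one). Suppose universal and conformally consistent Floer data has been chosen on $\overline{\Disc}_{d'}$ for all $2 \le d' < d$. The boundary $\partial \overline{\Disc}_d$ is covered by products $\overline{\Disc}_{d_1} \times \overline{\Disc}_{d-d_1+1}$, and on each such stratum conformal consistency forces the Floer datum to be, up to a fixed rescaling constant depending only on the weights at the node, the product of the previously chosen data. This specifies Floer data on $\partial \overline{\Disc}_d$ (for the boundary node, one has the explicit prescription given for $d=3$ in the text: leave the component containing $\xi^0$ alone and rescale the other by $\psi^{w}$). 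One then uses the gluing charts \eqref{eq:corner_chart_moduli_space} together with a smooth cutoff to extend the boundary data into a collar neighbourhood in a way that agrees to infinite order with the glued datum. Having fixed the data on a closed neighbourhood of $\partial \overline{\Disc}_d$, the remaining problem is to extend a partial section of a fibre bundle with contractible fibres over a manifold with corners to a global smooth section; this extension exists, and if $d=d_0$ we shrink the collar away from $S$ and interpolate with the prescribed datum $D_S$, again using contractibility of the fibre over $S$ and a cutoff in a chart around $S$.

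The main obstacle, and the only place requiring care, is the case $d=d_0$ when $S$ lies on $\partial \overline{\Disc}_{d_0}$: then the datum at $S$ is already partially constrained by the inductive choice on the boundary stratum containing $S$, which is itself a product of lower-dimensional moduli spaces. To accommodate an arbitrary prescribed $D_S$ in this case, I would modify the induction: rather than building $\overline{\Disc}_{d'}$ in increasing order of $d'$ only, I would first decompose the given $S$ into its irreducible components $S = S_1 \cup \cdots \cup S_j$, observe that $D_S$ restricts to Floer data on each $S_i$ (each $S_i$ has fewer punctures than $d_0$), and seed the induction by demanding that the universal data on $\overline{\Disc}_{d_i}$ restrict to $D_S|_{S_i}$. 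Since each $S_i$ is an interior point of its own moduli space (or, recursively, we apply the same reduction), this reduces to the interior case handled above, and the whole construction goes through without obstruction because all relevant fibres are contractible and all relevant obstructions vanish. The agreement to infinite order near each boundary stratum is then automatic from the choice of cutoff.
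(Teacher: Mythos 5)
Your argument is correct and follows the same line the paper sketches in the paragraph preceding the lemma: induct on the number of inputs, using contractibility of the space of Floer data on a fixed surface to extend from a collar of $\partial\Discbar_{d}$ into the interior, arranged so that the extension passes through the prescribed datum $D_S$. Your extra care for $S \in \partial\Discbar_{d_0}$ is reasonable, though the lemma is most naturally read for $S$ smooth --- for a nodal $S$ two of whose components coincide as abstract moduli points but carry non-conformally-equivalent data, no universal datum can restrict to $D_S$, since conformal consistency forces its restriction to have conformally equivalent pieces on those components.
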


\subsection{Higher products} \label{sec:a_infty-structure}
In this section, we construct  the $A_{\infty}$ structure on the wrapped Fukaya category which is given by higher products
\begin{equation*}
\mu^{\F}_d \co  CW^{*}_{b}(L_{d-1}, L_d)    \otimes \cdots \otimes    CW^{*}_{b}(L_1, L_2)  \otimes  CW^{*}_{b}(L_0,L_1 )  \to   CW^{*}_{b}( L_0, L_d)
\end{equation*}
coming from the count of certain solutions to Equation \eqref{eq:dbar_disc_1_output}.

Given a sequence of chords $\vx = \{ x_{k} \in \Chord(L_{k-1},L_k) \} $ if $1 \leq k \leq d  $ and $x_0 \in \Chord(L_0,L_d)$ with $L_0, \ldots,  L_{d}$ objects of $\Wrap_{b}(M)$,  we define $\Disc_{d}(x_0; \vx) $  to be the space of solutions to Equation \eqref{eq:dbar_disc_1_output} whose source is an arbitrary element $S \in \Disc_{d}$ with marked points $(\xi^0, \ldots, \xi^{d}) $, such that
\begin{equation}
\lim_{s \to \pm \infty} u \circ \epsilon^{k}(s, \cdot) =  \psi^{w_{k,S}} x_k 
\end{equation}
and with boundary conditions
\begin{equation} \label{eq:boundary_condition_disc_A_infty}
u(z) \in \psi^{\rho_{S}(z)} (L_k) \textrm{ if $z \in \partial S$ lies between $\xi^k$ and $\xi^{k+1}$.} 
\end{equation}
Note that these conditions make sense because of Lemma \ref{lem:equation_restricts_to_ends} which shows that Equation \eqref{eq:dbar_disc_1_output} restricts to Equation \eqref{eq:dbar_strip} up to applying $ \psi^{w_{k,S}}  $.  Figure \ref{fig:Boundary_conditions} shows the asymptotic conditions for $d=3$.
\begin{figure}
  \centering
    \includegraphics{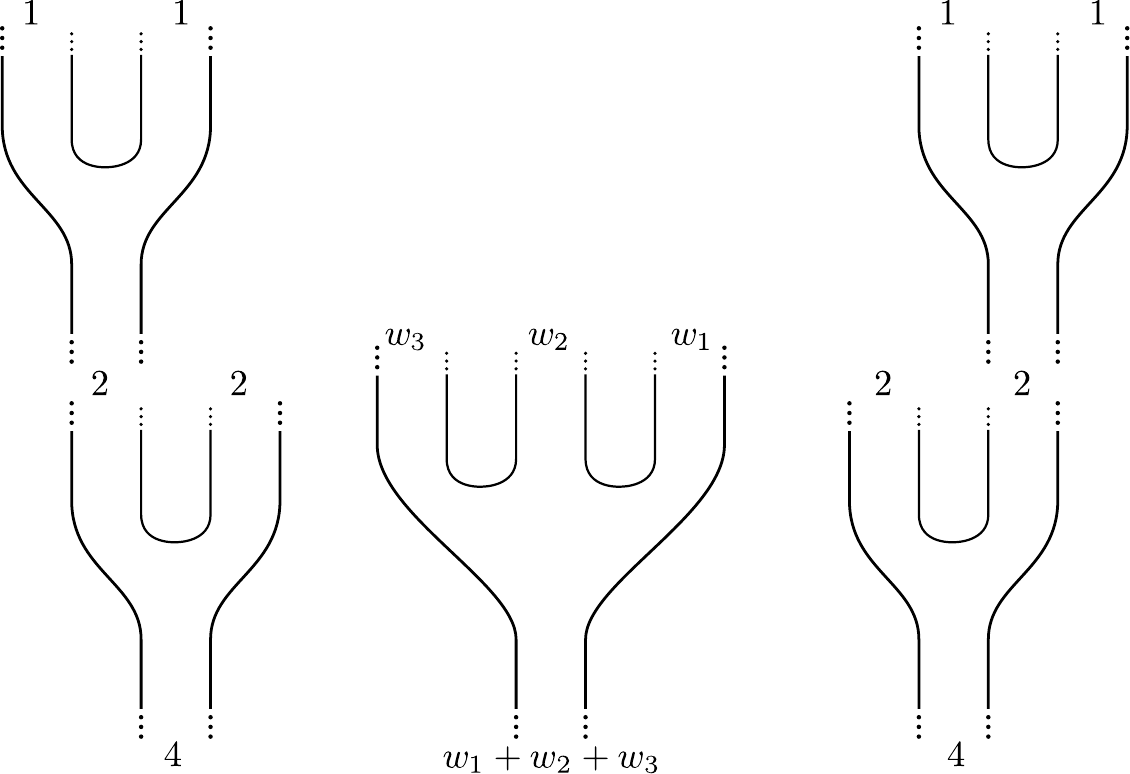}
  \caption{ }
  \label{fig:Boundary_conditions}
\end{figure}
 
With the exception of strips breaking at the ends, the virtual codimension $1$ strata of the Gromov bordification $\Discbar_{d}(x_0; \vx) $ lie over the codimension $1$ strata of $\Discbar_{d}$.  The consistency condition imposed on  $\bfD_{\mu}$ implies that whenever a disc breaks, each component is a solution to Equation \eqref{eq:dbar_pair_pants} for the Floer data $\bfD_{\mu}$  up to applying $\psi^{C}$ for some constant $C$ that depends on the modulus in $\Discbar_{d}$.  Since composition with $\psi^{C}$ identifies the solutions of this rescaled equation with the moduli space of the original equation, we conclude that for each integer $k$ between $0$ and $d-d_2$ and chord $y \in \Chord(L_{k+1},L_{k+d_2}) $, we obtain a natural inclusion
\begin{equation}
  \label{eq:codim_1_strata_discs_1_output}
  \Discbar_{d_1}(x_0; \vx[1])  \times   \Discbar_{d_2}(y; \vx[2]) \to  \Discbar_{d}(x_0; \vx)
\end{equation}
where the sequences of inputs in the respective factors are given by $\vx[2]= (x_{k+1}, \ldots, x_{k+d_2}) $ and $\vx[1] = (x_1, \ldots, x_k, y, x_{k+d_2+1}, \ldots, x_{d})$ as in Figure \ref{fig:breaking}.
\begin{figure}
  \centering
      \includegraphics{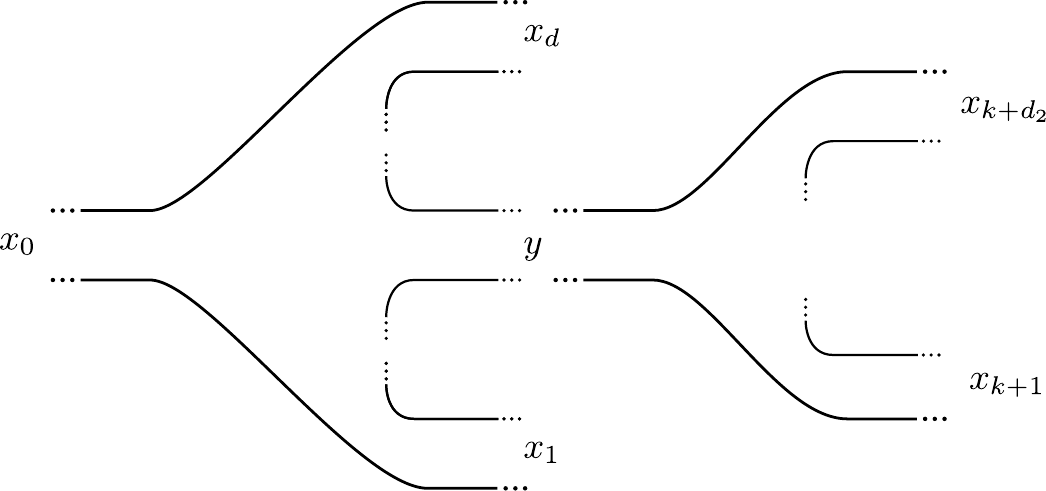}
  \caption{ }
  \label{fig:breaking}
\end{figure}

Applying the same arguments as would prove Lemma \ref{lem:regularity_compactness_2_inputs}, we conclude
\begin{lem} \label{lem:boundary_moduli_spaces_a_infty}
The moduli spaces $\Discbar_{d}(x_0; \vx) $  are compact,  and are empty for all but finitely many $x_0$ once the inputs $\vx$ are fixed.  For a generic choice  $\bfD_{\mu}$, they form manifolds of dimension
\begin{equation*}
  |x_0| + d - 2 - \sum_{1 \leq k \leq d} |x_k|
\end{equation*}
whose boundary is covered by the images of the inclusions \eqref{eq:codim_1_strata_discs_1_output}.
 \noproof
\end{lem}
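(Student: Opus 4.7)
The plan is to follow the template of Lemma \ref{lem:regularity_compactness_2_inputs}, now applied to the universal conformally consistent Floer data $\bfD_{\mu}$, with an added induction on $d$ to propagate regularity across the codimension-one strata.

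\emph{Compactness and finiteness.} First I would establish a uniform energy bound and a $C^{0}$-confinement to a compact subset of $M$, after which standard Gromov compactness produces $\Discbar_{d}(x_{0};\vx)$. The closedness of $\alpha_{S}$ together with Lemma \ref{lem:equation_restricts_to_ends} yields the usual action-energy identity bounding the geometric energy of a solution by $\sum_{k=1}^{d} w_{k,S}\Action(x_{k}) - w_{0,S}\Action(x_{0})$; the would-be obstruction $\int H_{S}\,d\alpha_{S}$ vanishes. For $C^{0}$-confinement I would repeat the maximum-principle argument of Lemma \ref{lem:gromov_compactness}: at a slice $\partial M^{in}\times\{r\}$ chosen outside every asymptotic chord, a piece of $u$ escaping into the cone would violate the contact-type inequality $\lambda(I_{S} X_{S})\le 0$. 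The boundary terms from the moving Lagrangian conditions $\psi^{\rho_{S}}(L_{k})$ cause no trouble, because each $\psi^{\rho}$ is conformally Liouville, so $\lambda$ still vanishes on $\psi^{\rho_{S}}(L_{k})$. This is the argument carried out in the proof of Lemma~3.2 of \cite{generate}. Combining the energy bound, the $C^{0}$-bound, and Lemma \ref{lem:action_proper} forces $\Action(x_{0})$ to lie in a bounded interval determined by $\vx$, and hence restricts $x_{0}$ to a finite set.

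\emph{Transversality and dimension.} I would proceed by induction on $d$, with base case $d=2$ given by Lemma \ref{lem:regularity_compactness_2_inputs}. Assuming transversality has been achieved over all boundary strata of $\Discbar_{d}$, Lemma \ref{lem:abundance_floer_data} allows a perturbation of $\bfD_{\mu}$ on the interior of $\Discbar_{d}$ while leaving the inductively fixed boundary data intact, and a Sard--Smale argument in the style of \cite{FHS} yields a Baire-generic set of such perturbations for which the linearised operator is surjective at every solution. Somewhere-injectivity --- the essential input to this argument --- is guaranteed because $\alpha_{S}$ does not vanish identically and because the family $I_{S}$ varies genuinely with $z \in S$, so Aronszajn's unique continuation applies to rule out trivial factors. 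The Fredholm index computation is standard and gives $\dim \Disc_{d} + |x_{0}| - \sum_{k=1}^{d}|x_{k}| = (d-2) + |x_{0}| - \sum_{k=1}^{d}|x_{k}|$, matching the stated dimension.

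\emph{Boundary strata.} Codimension-one non-compactness arises in two ways: strip-breaking at one of the strip-like ends $\epsilon^{k}$, and bubbling of the domain along a stratum $\Discbar_{d_{1}}\times\Discbar_{d_{2}}$ of $\Discbar_{d}$. Conformal consistency of $\bfD_{\mu}$ ensures that after applying an appropriate Liouville rescaling $\psi^{C}$, each component of such a nodal limit solves Equation \eqref{eq:dbar_disc_1_output} for the Floer datum already prescribed on the corresponding lower-dimensional moduli space. Since $\psi^{C}$ gives a bijection between solutions of the rescaled and original equations, each codimension-one boundary stratum is precisely the image of an inclusion \eqref{eq:codim_1_strata_discs_1_output}.

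The main obstacle I anticipate is producing the $C^{0}$-confinement uniformly over the family parametrised by $\Discbar_{d}$: one must verify that the maximum principle still applies once the moving boundary conditions $\psi^{\rho_{S}}$ and the rescaling of $H_{S}$ near the ends are both allowed to vary with the modulus. This uniformity follows from the explicit cylindrical form \eqref{eq:quadratic_growth} of $H$ and the Liouville rescaling property of $\psi^{\rho}$, both of which are preserved by all the operations built into Definition \ref{def:floer_datum_disc_1_output}, so the relevant integrand in the Stokes computation has a sign independent of the modulus.
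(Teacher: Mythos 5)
Your proposal is correct and takes essentially the same route as the paper, which records no proof for this lemma beyond the remark that it follows by ``the same arguments as would prove Lemma~\ref{lem:regularity_compactness_2_inputs}'', itself justified by a Sard--Smale argument in the style of \cite{FHS} for transversality and the maximum-principle/$C^{0}$-confinement argument of Lemma~\ref{lem:gromov_compactness} (explained in Lemma~3.2 of \cite{generate}) for compactness. Your sketch fills in exactly these pointers, and the inductive scheme over $d$ using Lemma~\ref{lem:abundance_floer_data} to perturb $\bfD_{\mu}$ relative to boundary strata is the correct way to organise the transversality argument given the conformal consistency constraints. One small slip: with the paper's conventions the output $x_{0}$ sits at the negative end and Lemma~\ref{lem:gromov_compactness} gives $\Action(x_{0}) \geq \Action(x_{1})$ for a strip, so the action-energy identity should read $E(u) \leq w_{0,S}\Action(x_{0}) - \sum_{k=1}^{d} w_{k,S}\Action(x_{k})$ rather than with the opposite sign; this does not affect the substance of your argument. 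Likewise the contact-type inequality used in the confinement step is $\lambda(I_{S}\circ du\circ j\,\xi) \leq 0$ for an inward-pointing $j\xi$, while $\lambda(I_{S} X_{S})$ vanishes identically on the relevant slice; again your conclusion is unaffected.
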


Whenever $|x_0| =  2 - d + \sum_{1 \leq k \leq d} |x^k|  $,  there are therefore only finitely many elements of   $\Disc_{d}(x_0; \vx) $.  Via the procedure described in Appendix \ref{sec:signs}, every such element $u \co S \to M $ induces an isomorphism
\begin{equation}
  \ro_{\psi^{w_{d,S}} x_d} \otimes \cdots \otimes \ro_{\psi^{w_{1,S}} x_1} \to \ro_{\psi^{w_{0,S}} x_0}.
\end{equation}
We write $\mu_{u}$ for the induced map on orientation lines, and omitting composition with $CW( \psi^{w_{k,S}} )  $ or its inverse from the notation, we define the $d$\th higher product 
\begin{equation*}
  \mu^{\F}_d \co  CW^{*}_{b}(L_{d-1}, L_d)    \otimes \cdots \otimes    CW^{*}_{b}(L_1, L_2)  \otimes  CW^{*}_{b}(L_0,L_1 )  \to   CW^{*}_{b}( L_0, L_d)
\end{equation*}
as a sum
\begin{equation}
  \mu^{\F}_{d}([x_d], \ldots, [x_1])  = \sum_{\substack{|x_0| =  2 - d + \sum_{1 \leq k \leq d} |x_k| \\ u \in \Disc_{d}(x_0; \vx) }} (-1)^{\dagger}  \mu_{u}([x_d], \ldots,  [x_1])
\end{equation}
with sign given by the formula
\begin{equation} \label{eq:dagger_sign}
 \dagger = \sum_{k=1}^{d} k |x_k|.
\end{equation}
This is simply a complicated way of saying that we count the elements of $\Disc_{d}(x_0; \vx)   $  with appropriate signs.   

If we now consider the $1$ dimensional moduli spaces, Lemma \ref{lem:boundary_moduli_spaces_a_infty} asserts that their boundaries are given by the strata appearing in Equation \eqref{eq:codim_1_strata_discs_1_output} which are rigid and therefore correspond to the composition of operations $\mu^{\F}_d$.  If we take signs into account, we conclude:
\begin{prop} \label{prop:a_infty_structure}
The operations $\mu^{\F}_d$ define an $A_{\infty}$ structure on the category $\Wrap_{b}(M)$.  In particular, the sum
\begin{equation}
  \label{eq:a_infty_property}
   \sum_{\substack{d_1 + d_2 = d +1 \\ 0 \leq k <d_1}} (-1)^{\maltese_{1}^{k}} \mu^{\F}_{d_1}\left(x_d, \ldots, x_{k+d_2+1}, \mu^{\F}_{d_2}(x_{k+d_2}, \ldots, x_{k+1}) ,   x_k, \ldots , x_1 \right) = 0 
\end{equation}
vanishes, where the value of the sign is given by
\begin{equation*} \maltese_{1}^{k} = k + \sum_{1 \leq j \leq k} |x_j|. \end{equation*}
\end{prop}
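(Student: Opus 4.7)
The plan is to deduce the $A_{\infty}$ relations by analysing the boundary of the $1$-dimensional components of the moduli spaces $\Discbar_{d}(x_0;\vx)$, exactly as in the standard Fukaya-categorical setup. Concretely, fix a sequence $\vx$ and an output chord $x_0$ with
\begin{equation*}
|x_0| = 1 - d + \sum_{1 \leq k \leq d} |x_k|,
\end{equation*}
so that Lemma \ref{lem:boundary_moduli_spaces_a_infty} identifies $\Discbar_{d}(x_0;\vx)$ with a compact oriented $1$-manifold with boundary. Since the signed count of boundary points of a compact oriented $1$-manifold is zero, summing over all such $(x_0,\vx)$ will produce the desired relation \eqref{eq:a_infty_property}, provided the boundary contributions are correctly identified with the compositions $\mu^{\F}_{d_1}(\ldots,\mu^{\F}_{d_2}(\ldots),\ldots)$ with the right signs.

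The first main step is to identify the boundary strata. By Lemma \ref{lem:boundary_moduli_spaces_a_infty}, the boundary is covered by the images of the inclusions \eqref{eq:codim_1_strata_discs_1_output}, which parametrise configurations of a rigid $d_1$-input disc glued to a rigid $d_2$-input disc at an intermediate chord $y$. The conformal consistency of the universal Floer datum $\bfD_{\mu}$, together with the observation that composition with $\psi^C$ induces a bijection between solution spaces of conformally equivalent Floer data, ensures that each boundary point corresponds to a pair of rigid solutions contributing to $\mu^{\F}_{d_1}$ and $\mu^{\F}_{d_2}$ respectively, with no Jacobian correction from rescaling. One must also verify that no other degenerations contribute in codimension one: sphere and disc bubbling are excluded by exactness and the Lagrangian boundary conditions, and escape to infinity is excluded by the analogue of Lemma \ref{lem:gromov_compactness} applied to the current setup (as in Lemma 3.2 of \cite{generate}).

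The second and harder step is the sign verification. Every rigid $u \in \Disc_{d}(x_0;\vx)$ induces an isomorphism of orientation lines $\mu_u$ as described in Appendix \ref{sec:signs}. For a boundary point corresponding to the splitting at level $k$ with intermediate chord $y$, the product orientation on the boundary stratum differs from the boundary orientation of $\Discbar_{d}(x_0;\vx)$ by a sign that one computes using the conventions of \cite{seidel-book}, analogous to the boundary-of-product-orientation formula of Lemma \ref{lem:product_orientations_strips}. Combining this with the $\dagger$-signs \eqref{eq:dagger_sign} assigned to each of the two operations, and the fact that the rescaling by $\psi^{w}$ in Lemma \ref{lem:equation_restricts_to_ends} is orientation-preserving, one checks that the total sign of each boundary contribution equals $(-1)^{\maltese_{1}^{k}}$ after shifting the intermediate $y$-factor through the remaining tensor factors. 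This is a bookkeeping exercise of the same type as Lemma \ref{lem:definition_twisted_complex_Lag}, and I would carry it out by comparing, term by term, the sign assigned to a broken configuration with the sign assigned to the corresponding composite in \eqref{eq:a_infty_property}.

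The step I expect to be the main obstacle is the sign computation, since there are several independent contributions (the intrinsic $\dagger$-signs on each operation, the boundary-versus-product orientation discrepancy, the Koszul reordering of the $y$-tensor factor, and possible sign twists coming from the rescaling factors $\psi^{w_{k,S}}$ absorbed into the identifications of orientation lines). Once these are reconciled, summing the contributions of all boundary points of the $1$-dimensional moduli spaces yields the relation \eqref{eq:a_infty_property} and proves that $\{\mu^{\F}_d\}$ is an $A_{\infty}$ structure.
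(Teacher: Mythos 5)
Your proposal follows essentially the same route the paper takes: identify the $A_\infty$ relation with the signed count of boundary points of the compact oriented $1$-dimensional components of $\Discbar_d(x_0;\vx)$, invoke Lemma \ref{lem:boundary_moduli_spaces_a_infty} to see that all codimension-one boundary strata are two-level breakings matching composites $\mu^{\F}_{d_1}(\ldots,\mu^{\F}_{d_2}(\ldots),\ldots)$ up to $\psi^C$-rescaling, and reconcile signs by the conventions of Appendix~\ref{sec:signs}. One small slip: the $1$-dimensional moduli spaces occur when $|x_0| = 3 - d + \sum_{1\le k\le d}|x_k|$, not $1-d+\sum|x_k|$ (compare with the rigid case $|x_0| = 2-d+\sum|x_k|$ stated just before Proposition~\ref{prop:a_infty_structure} and with the dimension formula in Lemma~\ref{lem:boundary_moduli_spaces_a_infty}).
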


\section{Construction of the functor} \label{sec:construction-functor}
In Section \ref{sec:at-level-objects}, we assigned to each Lagrangian in $\Wrap_{b}(M)$ an object $\cF(L)$ in $\Tw(\Moore(Q))$.  We shall now extend this assignment to an $A_{\infty}$ functor, and the first task is to construct a chain map
\begin{equation} \label{eq:linear_term}
  \cF^{1} \co CW^{*}_{b}(L_0, L_1)  \to \Hom_{*}(\cF(L_0), \cF(L_1) ) .
\end{equation}
\subsection{Moduli spaces of half-strips} \label{sec:moduli-spaces-half}
On the half strip $Z_{+} =   [0,+\infty) \times [0,1] $, let $\xi^{0} =   (0,1)$ and $\xi^{-1} =   (0,0)$, and consider the surface $T = Z_{+} - \{ \xi^{0}, \xi^{-1} \}$  drawn in a non-standard way in Figure \ref{fig:half-disc}.  If we think of $T$ as the complement of $3$ marked point in a disc, we write $\xi^{1}$ for the puncture at infinity; and call the segment between $\xi^{0}  $  and $\xi^{-1} $ outgoing.  Note that $T$ is biholomorphic to the surface $S$ shown in Figure \ref{fig:disc_2_inputs}, but we choose to use a different notation because maps from $T$ to $M$ will satisfy not satisfy the same Cauchy-Riemann equation we imposed on $S$.  The most important difference is that we shall consider a map
\begin{equation*}
  H_{T} \co T \to \sH(M)
\end{equation*}
which near $\xi^{1}$ agrees with $H$, and whose restriction to the interval between $\xi^0$ and $\xi^{-1}$ takes values in $\sH_{Q}(M)$   (i.e. vanishes on $Q$).
 
Writing $X_T$ for the Hamiltonian flow of $H_T$, we shall consider the Cauchy-Riemann equation
\begin{equation}
  \label{eq:dbar_half_strip}
  (du - X_{T} \otimes d \tau)^{0,1} = 0
\end{equation}
for maps $u \co T \to M $.   To specify the choice of a family of almost complex structure with respect to which we take the $(0,1)$ part, let us fix a positive strip-like end $\epsilon^0$ near $\xi^0$ and a negative end $\epsilon^{-1}$ near $\xi^{-1}$, and choose a map
\begin{equation*}
  I_{T} \co  T \to \sJ(M)
\end{equation*}
which agrees with $I_t$ near $\xi^{1}$, and whose pullback under $\epsilon^0$ and $\epsilon^1$ also agrees with $I_t$.  
\begin{lem} \label{lem:restriction_half_disc_equation_ends}
The restriction of Equation \eqref{eq:dbar_half_strip} agrees with Equation \eqref{eq:dbar_strip} near $\xi^1$, and its pullback under $\epsilon^0$ and $\epsilon^1$ agrees with Equation \eqref{eq:dbar_no_X}. \noproof
\end{lem}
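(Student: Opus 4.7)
The plan is to unpack the definitions of the auxiliary data specified on $T$ in Section \ref{sec:moduli-spaces-half} and to verify the two assertions separately, puncture by puncture.

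For the assertion near $\xi^{1}$, which corresponds to the end $s \to +\infty$ in the half-strip model $T = Z_{+} \setminus \{\xi^{0},\xi^{-1}\}$, the natural translational coordinates identify a neighbourhood of $\xi^{1}$ in $T$ with a standard strip-like end. By the choices made when specifying the auxiliary data, $H_{T} = H$ (hence $X_{T} = X$) and $I_{T} = I_{t}$ on this region, and by construction the $1$-form $d\tau$ on $T$ is simply $\tau'(t)\, dt$ pulled back from the $t$-factor of $Z_{+}$. Substituting these identifications into Equation \eqref{eq:dbar_half_strip} reproduces Equation \eqref{eq:dbar_strip} verbatim.

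For the pullback under $\epsilon^{0}$ and $\epsilon^{-1}$, the essential input is the property of $\tau$ fixed at the start of Section \ref{sec:revi-wrapp-fukaya}: the function $\tau$ is identically $0$ near $t=0$ and identically $1$ near $t=1$. Consequently $d\tau$ vanishes identically on an open neighbourhood in $T$ of each corner puncture $\xi^{0} = (0,1)$ and $\xi^{-1} = (0,0)$. After, if necessary, shrinking the strip-like ends so that their images land inside these neighbourhoods, we obtain $(\epsilon^{0})^{*} d\tau \equiv 0$ and $(\epsilon^{-1})^{*} d\tau \equiv 0$, so the Hamiltonian perturbation term $X_{T} \otimes d\tau$ pulls back to zero. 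Combined with the standing assumption that $(\epsilon^{0})^{*} I_{T} = I_{t} = (\epsilon^{-1})^{*} I_{T}$, the pullback of Equation \eqref{eq:dbar_half_strip} reduces to $du \circ j - I_{t} \circ du = 0$, which is Equation \eqref{eq:dbar_no_X}. There is no genuine obstacle here: the content has been front-loaded into the choice of a cut-off $\tau$ flat near the endpoints of $[0,1]$, together with the matching conditions built into the definitions of $H_{T}$ and $I_{T}$ on $T$.
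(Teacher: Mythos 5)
Your argument is correct and is precisely the unpacking-of-definitions argument that the paper has in mind when it marks the lemma with no proof. Near $\xi^{1}$ the data $(H_{T}, I_{T}, d\tau)$ literally restricts to $(H, I_{t}, d\tau)$, giving \eqref{eq:dbar_strip}; near $\xi^{0}$ and $\xi^{-1}$ the vanishing of $\tau'$ near $t=0,1$ kills the Hamiltonian term once the strip-like ends are taken small enough, and the matching condition on $I_{T}$ reduces the pullback to the unperturbed $\dbar$-equation \eqref{eq:dbar_no_X}. Your remark about shrinking the strip-like ends is exactly the (tacit) assumption needed to make the statement literal, and you correctly read the paper's $\epsilon^{1}$ as the typo for $\epsilon^{-1}$.
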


\begin{figure}
  \centering
  \includegraphics{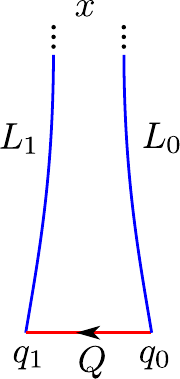}
  \caption{ }
  \label{fig:half-disc}
\end{figure}
Given a chord $x \in \Chord(L_0, L_1)$, and intersection points $q_0$ (respectively $q_1$) between $L_0$ (or $L_1$) and $Q$, it therefore makes sense to define $\Pil( q_0 , x , q_1 )  $ (see Figure \ref{fig:half-disc}) to be the moduli space of solutions $u$ to Equation \eqref{eq:dbar_half_strip} with boundary conditions
\begin{equation*}
\begin{cases} u(z) \in L_0 & \textrm{if $z \in \partial T$ lies on the segment between $\xi^0$ and $\xi^1$} \\
u(z) \in L_1 & \textrm{if $z \in \partial T$ lies on the segment between $\xi^{-1}$ and $\xi^1$} \\
u(z) \in Q & \textrm{if $z \in \partial T$ lies on the outgoing segment.}
\end{cases}
\end{equation*}
and asymptotic conditions
\begin{align*}
\lim_{s \to +\infty}  u(s, \cdot)  & = x(\cdot) \\
  \lim_{s \rightarrow - \infty} u \circ \epsilon^{1} (s,t)&  = q_1  \\ 
  \lim_{s \rightarrow + \infty} u \circ \epsilon^{0}(s,t) &  = q_0.
\end{align*}
From the standard transversality results in Floer theory we conclude:
\begin{lem}
For generic choices of Hamiltonians $H_T$ and almost complex structure $J_T$, the moduli space $ \Pil( q_0 , x , q_1 )   $  is a smooth manifold of dimension
\begin{equation}   |q_0| - |q_1| -  |x|. \end{equation} \noproof
\end{lem}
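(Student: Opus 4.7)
The plan is to follow the standard template for computing dimensions of moduli spaces of perturbed holomorphic maps: linearize the Cauchy--Riemann equation, prove surjectivity of the linearization for generic auxiliary data, and then compute the index of the resulting Fredholm operator via an additivity-over-punctures Maslov calculation. This proof is entirely parallel to the one sketched for Lemma \ref{lem:free_R_action_strips} and Lemma \ref{lem:boundary_moduli_spaces_a_infty}, but with one input-type end (the chord $x$ at $\xi^{1}$) and two output-type ends (the intersection points $q_{0}$, $q_{1}$ at $\xi^{0}$, $\xi^{-1}$).

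First I would set up the Banach manifold of maps $u \co T \to M$ of class $W^{1,p}_{\mathrm{loc}}$, with boundary conditions on $L_{0}$, $L_{1}$, $Q$, and with exponential convergence to $x$, $q_{0}$, $q_{1}$ at the three punctures respectively (using the strip-like ends $\epsilon^{0}, \epsilon^{-1}$ and the positive end near $\xi^{1}$). Lemma \ref{lem:restriction_half_disc_equation_ends} tells us precisely which asymptotic equation is solved at each end, so non-degeneracy of $x$ (Condition \eqref{eq:non-degenerate_chord}) and transversality of $Q$ to $L_{0}$ and $L_{1}$ (Condition \eqref{eq:boundary_Legendrian}) guarantee that the linearized operator $D_{u}$ at any solution is Fredholm on this Banach space. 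Transversality then follows from a standard Sard--Smale argument applied to the universal moduli space parametrized by $(H_{T}, I_{T})$: because $H_{T}$ and $I_{T}$ are allowed to vary freely over the interior of $T$ (with fixed behaviour only near the three ends), the universal linearization is surjective, and the regular values of the projection to the space of perturbations give us the desired generic set.

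The index calculation uses additivity of the Fredholm index under the decomposition of $T$ into a neighbourhood of each puncture and a compact piece. The compact piece contributes the dimension of the space of Lagrangian boundary conditions on the disc, which is zero after accounting for the automorphism group (there is no residual reparametrization on $T$ since all three punctures are fixed). Each of the three ends contributes a Maslov-type index; under our grading conventions (Section 12 of \cite{seidel-book}), the end at $\xi^{1}$ contributes $-|x|$ because it is a positive/incoming end along which one reads off the Conley--Zehnder index of a Hamiltonian chord, while the outgoing ends at $\xi^{0}$ and $\xi^{-1}$ contribute $+|q_{0}|$ and $-|q_{1}|$ respectively (the asymmetry in sign between $q_{0}$ and $q_{1}$ comes from the fact that one appears as the asymptote of a positive strip-like end and the other of a negative strip-like end, exactly as in the dimension formula $|q^{i}| - |q^{j}|$ of Proposition \ref{prop:moduli_spaces_strips_manifold}). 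Summing these contributions yields $|q_{0}| - |q_{1}| - |x|$.

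The main obstacle, such as it is, is purely bookkeeping: matching the sign conventions used for Hamiltonian chord indices and Lagrangian intersection indices so that the three contributions combine with the correct signs. One sanity check is the degenerate limit in which the strip-like end at $\xi^{1}$ is stretched until it breaks into a thin Hamiltonian strip glued to a genuine holomorphic half-disc with boundary on $Q$; on one side this should recover the strip dimension $|q^{i}|-|q^{j}|$ of Proposition \ref{prop:moduli_spaces_strips_manifold} with an extra index $-|x|$ absorbed from the Hamiltonian chord, which is exactly the claimed formula.
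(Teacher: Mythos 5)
Your proposal is correct and takes essentially the same route the paper has in mind: the lemma is stated with no proof precisely because the argument is the standard Fredholm-plus-Sard--Smale-plus-index computation, and your sketch reproduces that template accurately, with the dimension formula coming out as $|q_0|-|q_1|-|x|$. Two small remarks. First, the Hamiltonian $H_T$ is constrained (it must lie in $\sH_Q(M)$ near the outgoing segment), so when you assert that $(H_T,I_T)$ ``are allowed to vary freely over the interior of $T$'' you should note that this constraint still leaves enough freedom for the universal moduli space to be cut out transversally; this is harmless but worth acknowledging. Second, your sign bookkeeping is right, but the way you phrase it obscures where the signs really come from: you say $x$ (at a positive end) contributes $-|x|$ while $q_0$ (also at a positive strip-like end) contributes $+|q_0|$, attributing the asymmetry between $q_0$ and $q_1$ to the positive-vs-negative end distinction. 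The deeper reason the signs differ between $x$ and $q_0$ is that the grading convention for the chord $x\in\Chord(L_0,L_1)$ (as a generator of $CW^*_{b}(L_0,L_1)$, with the $t=0$ Lagrangian listed first) and the grading convention for the intersection point $q_0\in L_0\cap Q$ (as a generator of $HF^*(L,Q)$, with the $t=1$ Lagrangian listed first) are set up with opposite boundary-ordering conventions, so that a positive puncture of one type contributes with the opposite sign of a positive puncture of the other type. You are effectively reading this off from the fact that the answer must be consistent with the dimension formulas of Proposition \ref{prop:moduli_spaces_strips_manifold} and Lemma \ref{lem:free_R_action_strips}, which is a legitimate way to pin down the conventions, but it is worth being aware that there is an actual convention choice being made rather than a contradiction.
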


We assume that a choice of $H_t$ and $J_T$ is now fixed, and proceed to analyse the boundary of the Gromov compactification.  Since $T$ is topologically a $3$-punctured discs, there is no modulus, and the only strata that we need to add to the Gromov compactification are obtained by considering  breakings of strips at the three ends.  By Lemma \ref{lem:restriction_half_disc_equation_ends}, the virtual codimension $1$ strata (see Figure \ref{fig:breaking_half_discs}) are
\begin{align} \label{eq:half_disc_top_break}
& \coprod_{x_0 \in \Chord(L_0,L_1)}  \Pil( q_0 , x_0 , q_1 )  \times \Disc(x_0,x)  \\ \label{eq:half_disc_right_break}
& \coprod_{q'_1 \in L_1 \cap Q } \Pil( q_0 , x , q'_1 )  \times \Pil(q'_1, q_1)  \\ \label{eq:half_disc_left_break}
&   \coprod_{q'_0 \in L_0 \cap Q } \Pil(q_0, q'_0) \times   \Pil( q'_0 , x , q_1 ).
\end{align}

\begin{figure}
  \centering
  \includegraphics{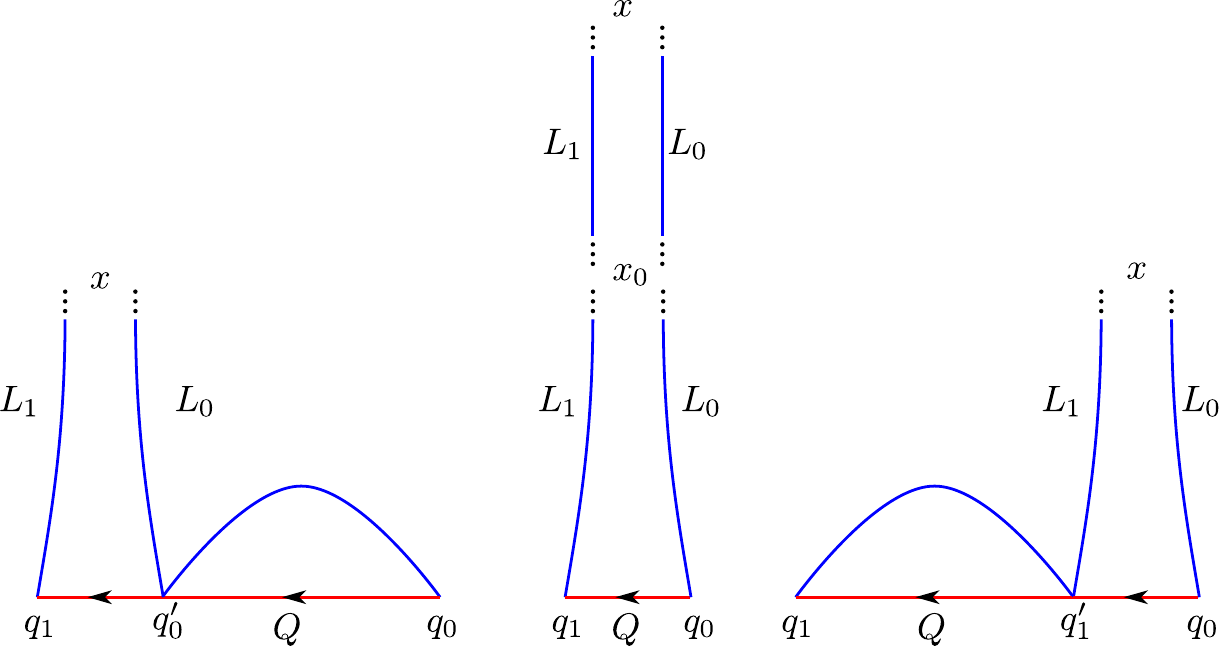}
  \caption{ }
  \label{fig:breaking_half_discs}
\end{figure}

The standard compactness result implies
\begin{lem}
The Gromov compactification $  \Pilbar( q_0 , x , q_1 ) $ is a compact manifold whose boundary is covered by the images of the strata \eqref{eq:half_disc_top_break}-~\eqref{eq:half_disc_left_break}.  \noproof
\end{lem}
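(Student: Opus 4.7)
The plan is to follow the familiar two-step pattern used throughout the paper: first a confinement/energy estimate forcing all solutions into a fixed compact region of $M$, then a classification of the possible degenerations of a sequence of such solutions. Because $T$ is a fixed $3$-punctured disc with no conformal modulus, the only source of non-compactness of $\Pil(q_0,x,q_1)$ comes from sequences escaping to infinity in $M$ or developing thin necks along the three strip-like ends.

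First I would establish the a priori $C^0$ estimate. The asymptotic data $q_0,q_1 \in L_i \cap Q$ and the chord $x$ all lie in a compact subset of $M$, as does $Q$ (being closed) and the Lagrangian intersection with any given slice of the cylindrical end. Exactly as in the proof of Lemma \ref{lem:gromov_compactness}, any solution to Equation \eqref{eq:dbar_half_strip} escaping some hypersurface $\partial M^{in}\times\{r\}$ would yield a compact surface $\Sigma$ in the cone with boundary on the concave end together with Lagrangian arcs on which $\lambda$ vanishes; the Stokes'/contact-type argument (together with the fact that $H_T$ takes values in $\sH(M)$, and in $\sH_Q(M)$ on the outgoing segment, so that the Hamiltonian contribution has the correct sign) yields the same contradiction. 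Combined with the standard energy identity for the perturbed $\dbar$-equation, this confines all elements of $\Pil(q_0,x,q_1)$ to a compact subset of $M$ whose size depends only on $(q_0,x,q_1)$.

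Next I would apply the usual Gromov compactness theorem in the presence of Lagrangian boundary conditions and Hamiltonian perturbations. Exactness of $Q$, $L_0$ and $L_1$ together with the closedness of $\omega$ excludes sphere bubbles and disc bubbles, so the only mechanism for non-compactness is breaking of strips along the three cylindrical ends. Since $T$ has no moduli, no further codimension-one degenerations are possible. The three possible end-breakings give precisely the three strata \eqref{eq:half_disc_top_break}-\eqref{eq:half_disc_left_break}: at $\xi^1$, where Equation \eqref{eq:dbar_half_strip} asymptotes to Equation \eqref{eq:dbar_strip} (Lemma \ref{lem:restriction_half_disc_equation_ends}), a broken Floer strip $\Disc(x_0,x)$ splits off; at $\xi^0$ and $\xi^{-1}$, where the pullback of the equation agrees with the unperturbed \eqref{eq:dbar_no_X}, the broken pieces are holomorphic strips with boundary on $Q$ and $L_i$, that is, elements of $\Pil(q'_0,q_0)$ or $\Pil(q'_1,q_1)$.

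Finally, the topological manifold structure and the identification of each stratum \eqref{eq:half_disc_top_break}-\eqref{eq:half_disc_left_break} with an actual boundary face of $\Pilbar(q_0,x,q_1)$ is obtained by the standard gluing construction, exactly as in Corollary \ref{cor:compactification_strip_manifold} and Lemma \ref{lem:boundary_moduli_spaces_a_infty}; for a generic choice of $H_T$ and $J_T$ every rigid broken configuration in the codimension-one strata has a unique gluing parameter modulo the usual identifications. The main technical obstacle is the $C^0$ confinement argument, since it must simultaneously control the behaviour at the chord-type end (where the moving boundary involves $L_0,L_1$ possibly running to infinity along cylindrical ends) and along the outgoing segment (where the boundary condition is the compact Lagrangian $Q$ and $H_T$ has been chosen to vanish); once the maximum principle / Stokes' argument on $\partial_r\Sigma$ is verified in this mixed setting, the remainder of the argument is standard Floer theory.
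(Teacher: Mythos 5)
Your proposal is correct and follows the same route the paper implicitly intends (the lemma is stated with no proof, preceded by ``the standard compactness result implies'' and followed, two lemmas later, by the explicit remark that ``the argument used in proving Lemma \ref{lem:gromov_compactness} applies here as well,'' with the gluing deferred to Appendix \ref{app:manifold-with-boundary}). Your three-step outline---confinement via the Stokes'/contact-type argument of Lemma \ref{lem:gromov_compactness}, Gromov compactness with exactness excluding bubbling and rigidity of the $3$-punctured disc excluding domain degenerations, then gluing as in Proposition \ref{prop:moduli_spaces_strips_manifold}---matches the paper's intent, and you correctly flag the one genuine subtlety, namely that the $1$-form $d\tau$ does not vanish on the outgoing segment and compactness there is rescued only because $Q$ is a \emph{closed} Lagrangian (a point the paper itself emphasises just after Definition \ref{def:floer_datum_half_popsicle}). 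The only slip is notational: in the last sentence of your second paragraph you wrote $\Pil(q'_0,q_0)$ where the stratum \eqref{eq:half_disc_left_break} is $\Pil(q_0,q'_0)$.
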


In addition, we shall need to compare the product orientations on the strata of $ \partial \Pilbar( q_0 , x , q_1 )  $.  The proof is delayed until Appendix \ref{sec:signs}.
\begin{lem} \label{lem:orientaiton_error_moduli_half_disc_1_puncture}
The difference between the orientation induced as a boundary of $ \Pilbar( q_0 , x , q_1 ) $ and the product orientation is given by signs whose parity is
\begin{align*}
|x_0| + |q_0| & \textrm{ for the strata \eqref{eq:half_disc_top_break}if  $\Disc(x_0,x)$ is rigid.} \\
1+ |q_0| + |x| + | q'_1| &  \textrm{ for the strata \eqref{eq:half_disc_right_break}} \\
0 &  \textrm{ for the strata \eqref{eq:half_disc_left_break}.}
\end{align*}

\end{lem}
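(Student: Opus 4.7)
The plan is to apply the general framework for orienting moduli spaces developed in Appendix \ref{sec:signs}, following the conventions of \cite{seidel-book}. For each of the three boundary strata, the sign discrepancy decomposes as a sum of two contributions: first, the sign coming from the difference between the inward-pointing gluing parameter and the outward-pointing normal that defines the boundary orientation; second, the Koszul sign needed to reorder the tensor factors so that the gluing-parameter factor lands in the position appropriate to the boundary-orientation convention.

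The main tool is the gluing isomorphism, which at each boundary stratum identifies the determinant line of the linearised $\dbar$-operator on the broken curve with the tensor product of the determinant lines of its two components tensored with an $\bR$ factor for the gluing parameter. This determinant line is in turn identified, using the capping surfaces chosen in Appendix \ref{sec:orientations}, with the prescribed tensor product of orientation lines $\ro_x$, $\ro_{q_0}$, $\ro_{q_1}$, and similarly for the intermediate chord $x_0$ or intersection points $q'_0$, $q'_1$ appearing in the broken configuration.

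I would then carry out the three cases in parallel. For stratum \eqref{eq:half_disc_top_break}, where a rigid strip $u_1 \in \Disc(x_0, x)$ breaks off at the chord end $\xi^{1}$, the linearised operator on the broken curve decomposes as a direct sum, and pushing the gluing parameter through the identifications produces a correction with parity $|x_0| + |q_0|$. For stratum \eqref{eq:half_disc_right_break}, where a strip in $\Pil(q'_1, q_1)$ breaks off at the outgoing end $\xi^{-1}$, the corresponding correction is $1 + |q_0| + |x| + |q'_1|$; the extra $1$ reflects the fact that, in the ordering of orientation lines dictated by Appendix \ref{sec:orientations}, the gluing parameter must be commuted past one additional factor compared to case \eqref{eq:half_disc_top_break}. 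Finally, for stratum \eqref{eq:half_disc_left_break}, the breaking occurs at the upper outgoing end $\xi^{0}$; here the orientation conventions are set up precisely so that no further reordering is required and the sign vanishes.

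The main obstacle will be bookkeeping rather than new mathematics: all three signs arise from unpacking the asymmetric role played by inputs versus outputs in the construction of the orientation lines $\ro_x$ in Appendix \ref{sec:orientations}. Special care is needed because, unlike in the usual $A_{\infty}$ setup, the surface $T$ carries two outgoing ends of different geometric types, corresponding to transverse intersections of Lagrangians with $Q$ rather than to Hamiltonian chords, in addition to the chord end at $\xi^{1}$. The computation in case \eqref{eq:half_disc_left_break} is essentially the one that normalises the other two, and doing it first fixes the remaining conventions.
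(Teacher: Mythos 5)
Your high-level framework is the correct one: the paper indeed expands each boundary stratum via the orientation isomorphisms \eqref{eq:orientation_higher_maps} and \eqref{eq:orientation_isomorphism_discs}, identifies the translation direction $\partial_s$ of the broken-off strip with the normal to the boundary, and computes Koszul signs to bring $\langle\partial_s\rangle$ to the front. You also locate the breakings at the correct ends.

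However, your explanation of the extra $1$ in the sign for stratum \eqref{eq:half_disc_right_break} is wrong, and the error is not merely cosmetic: if you carried out the bookkeeping along the lines you describe, you would not arrive at the stated parity. In both \eqref{eq:half_disc_top_break} and \eqref{eq:half_disc_right_break} one starts from a six-fold tensor product; in the first case it is $\ro_{q_0} \otimes \ro^{\vee}_{x_0} \otimes \ro^{\vee}_{q_1} \otimes \langle\partial_s\rangle \otimes \ro_{x_0} \otimes \ro^{\vee}_x$ and in the second it is $\ro_{q_0} \otimes \ro^{\vee}_x \otimes \ro^{\vee}_{q'_1} \otimes \langle\partial_s\rangle \otimes \ro_{q'_1} \otimes \ro^{\vee}_{q_1}$, so $\langle\partial_s\rangle$ sits behind the same number of factors in both. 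There is no ``one additional factor'' to commute past, and even if there were, that factor would carry degree $|q'_1|$ rather than $1$, so it could not account for an additive $+1$. The source of the $+1$ is geometric, not combinatorial: upon gluing, $\partial_s$ points \emph{outward} in case \eqref{eq:half_disc_top_break} (the strip at the chord end $\xi^1$ is attached at its negative end) but \emph{inward} in case \eqref{eq:half_disc_right_break} (the strip $\Pil(q'_1,q_1)$ is attached at its positive end), so identifying $\langle\partial_s\rangle$ with the outward normal $\nu$ of Equation \eqref{eq:orient_boundary} costs an extra sign exactly in the second case. Relatedly, your opening sentence asserts the gluing parameter is always ``inward-pointing,'' which obscures precisely the case-dependence that produces the $+1$; the direction of $\partial_s$ relative to the boundary depends on which end the strip breaks off at, which is the point of Seidel's Section~(12f) convention the paper invokes. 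Your remark that case \eqref{eq:half_disc_left_break} ``normalises the other two'' is also off the mark: all three signs are forced by the already-fixed conventions of Equations \eqref{eq:orientation_higher_maps} and \eqref{eq:orientation_isomorphism_discs}, and the third case is simply the one where $\langle\partial_s\rangle$ is already in first position and happens to be outward-pointing.
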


\subsection{The linear term}
Consider the evaluation map
\begin{equation} \ev \co \Pilbar(q_0, x,q_1) \to \Omega(q_0,q_1) \end{equation}
which takes every half disc $u$ to the path along $Q$ between $q_0$ and $q_1 $ obtained by restricting $u$ to the outgoing segment.  As in Section \ref{sec:at-level-objects}, we shall use the length parametrisation in order to remove any ambiguity.  In particular, if we consider the boundary strata \eqref{eq:half_disc_top_break}, we obtain a commutative diagram
\begin{equation*}
 \xymatrix{  \Pil( q_0 , x_0 , q_1 )  \times \Disc(x_0,x)   \ar[r] \ar[d]^{\ev} &  \Pilbar( q_0 , x , q_1 )    \ar[d]^{\ev} \\
  \Pil( q_0 , x_0 , q_1 )   \ar[r] &   \Omega(q_0,q_1) & } \end{equation*}
in which the left vertical arrow is projection to the first factor and the top map is the inclusion of a boundary stratum.    While if we consider the strata \eqref{eq:half_disc_right_break} and \eqref{eq:half_disc_left_break}, we obtain diagrams
\begin{equation*}
 \xymatrix{  \Pil( q_0 , x , q'_1 )  \times \Pil(q'_1, q_1)   \ar[r] \ar[d] &  \Pilbar( q_0 , x , q_1 )    \ar[d] & \ar[l] \ar[d]  \Pil(q_0, q'_0) \times   \Pil( q'_0 , x , q_1 )  \\
   \Omega(q_0,q'_1) \times  \Omega(q'_1,q_1) \ar[r]  & \Omega(q_0,q_1) & \ar[l]  \Omega(q_0, q'_0) \times   \Omega( q'_0  , q_1 )   . }
\end{equation*}
where the arrows in the bottom row are obtained by concatenation.

\begin{lem} \label{lem:fundamental_chains_linear_map}
There exist fundamental chains $ [ \Pilbar(q_0, x,q_1)] \in C_{*} (  \Pilbar(q_0, x,q_1) )  $  such that the assignment
\begin{equation}
\cF^{1}( [x] ) = \bigoplus_{q_0, q_1}  (-1)^{|x| + (|q_0|+1)(|x| + |q_1|  )}  \ev_{*} ( [ \Pilbar(q_0, x,q_1)]) 
\end{equation}
defines the chain map described in Equation \eqref{eq:linear_term}.
\end{lem}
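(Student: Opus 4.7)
The proof has two ingredients: (i) an inductive construction of fundamental chains $[\Pilbar(q_0,x,q_1)]$ whose boundary decomposes correctly with respect to the three types of codimension $1$ strata listed in \eqref{eq:half_disc_top_break}--\eqref{eq:half_disc_left_break}, and (ii) a sign check showing that after evaluation and push-forward the formula for $\cF^{1}$ intertwines the Floer differential $\mu^{\F}_{1}$ with the twisted complex differential $\mu^{\Tw(\Moore)}_{1}$.

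For step (i), I would proceed as in the construction of $[\Pilbar(q^i,q^j)]$ in Section~\ref{sec:at-level-objects}, stratifying the construction by the dimension $|q_0| - |q_1| - |x|$ of the moduli space. For zero-dimensional $\Pilbar(q_0,x,q_1)$ we take the fundamental cycle determined by the signed count of points, using the orientation isomorphism associated to our choice of brane data. Suppose inductively that chains have been chosen for all half-strip moduli spaces of dimension less than $m$, satisfying the boundary relation
\begin{equation*}
\partial [\Pilbar(q_0,x,q_1)] = \sum_{x_0} \pm [\Pilbar(q_0,x_0,q_1)]\times [\Discbar(x_0,x)] + \sum_{q'_1} \pm [\Pilbar(q_0,x,q'_1)]\times [\Pilbar(q'_1,q_1)] + \sum_{q'_0} \pm [\Pilbar(q_0,q'_0)]\times [\Pilbar(q'_0,x,q_1)],
\end{equation*}
where the signs are dictated by Lemma~\ref{lem:orientaiton_error_moduli_half_disc_1_puncture}. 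For a moduli space of dimension $m$, the right-hand side defines a codimension $1$ cycle on $\partial \Pilbar(q_0,x,q_1)$; I would then check that this cycle is closed (using the inductive boundary formula, the associativity of the cross product, and the commutativity of the evaluation diagrams) and pick a bounding chain $[\Pilbar(q_0,x,q_1)]$, exactly as in the previous inductive construction.

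For step (ii), I would push forward via $\ev$ and feed into the formula for $\cF^{1}$. The three types of boundary strata give rise to three contributions to $\partial \ev_{*}[\Pilbar(q_0,x,q_1)]$: the first, via pushforward along projection to $\Disc(x_0,x)$ followed by evaluation, produces the sum $\ev_{*}[\Pilbar(q_0,x_0,q_1)]$ weighted by the signed count of rigid elements of $\Disc(x_0,x)$, i.e.\ precisely $\cF^{1}(\mu^{\F}_{1}[x])$. The second and third strata, via the commutative diagrams of Section~\ref{sec:at-level-objects} together with Lemma~\ref{lem:evaluate_discs_paths}, produce respectively the products $\mu^{\P}_{2}(\ev_{*}[\Pilbar(q_0,x,q'_1)], [\Pilbar(q'_1,q_1)])$ and $\mu^{\P}_{2}([\Pilbar(q_0,q'_0)], \ev_{*}[\Pilbar(q'_0,x,q_1)])$, which assemble into $\mu^{\P}_{2}(\cF^{1}([x]), D^{\cF(L_1)})$ and $\mu^{\P}_{2}(D^{\cF(L_0)}, \cF^{1}([x]))$ respectively. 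Combining these via the formula \eqref{eq:differential_twisted} for the differential on $\Hom_{*}(\cF(L_0),\cF(L_1))$ yields the desired identity $\mu^{\Tw(\Moore)}_{1} \cF^{1}([x]) = \cF^{1}(\mu^{\F}_{1}[x])$.

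The main obstacle is sign bookkeeping, not analysis: the signs involved include (a) the boundary orientation errors from Lemma~\ref{lem:orientaiton_error_moduli_half_disc_1_puncture}, (b) the sign $(-1)^{|q_0|(|q_1|+1)}$ appearing in the definition of $D^{\cF(L)}$ from Lemma~\ref{lem:definition_twisted_complex_Lag}, (c) the shifts \eqref{eq:sign_additive_enlargement} required when working in the additive enlargement of $\Moore(Q)$, (d) the Koszul sign \eqref{eq:signs_a_infty_cubical} in the product $\mu^{\P}_{2}$, and (e) the sign $(-1)^{|x|}$ from the Floer differential \eqref{eq:wrapped_differential}. The prefactor $(-1)^{|x| + (|q_0|+1)(|x|+|q_1|)}$ in the statement of the lemma is precisely the unique choice that makes all of these contributions cancel in pairs so that $\mu^{\Tw(\Moore)}_{1}\cF^{1} - \cF^{1}\mu^{\F}_{1} = 0$; verifying this amounts to a direct parity computation of the form carried out at the end of the proof of Lemma~\ref{lem:definition_twisted_complex_Lag}, using the dimension formula $\dim \Pilbar(q_0,x,q_1) = |q_0| - |q_1| - |x|$ in each of the three strata separately.
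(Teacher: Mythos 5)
Your proof follows essentially the same route as the paper's: build the fundamental chains inductively (base case the closed components, inductive step picking a bounding chain for the closed codimension $1$ cycle on the boundary), observe that the non-rigid pieces of the strata \eqref{eq:half_disc_top_break} push forward to degenerate cubical chains and hence drop out, and then match the remaining boundary strata against the terms of the twisted-complex differential.

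There is, however, an error in the matching that you should fix. With the paper's convention $\mu^{\P}_{2}(\sigma_2,\sigma_1) = (-1)^{\deg\sigma_1}\,\sigma_1\cdot\sigma_2$, the second argument of $\mu^{\P}_{2}$ is traversed first, so the evaluation of the stratum $\Pil(q_0,x,q'_1)\times\Pil(q'_1,q_1)$ concatenates a path $q_0\to q'_1$ (belonging to $\cF^{1}[x]$) followed by a path $q'_1\to q_1$ (belonging to $D^{\cF(L_1)}$), which is $\mu^{\P}_{2}\bigl(D^{\cF(L_1)},\cF^{1}[x]\bigr)$, not $\mu^{\P}_{2}\bigl(\cF^{1}[x],D^{\cF(L_1)}\bigr)$ as you wrote. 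Likewise the stratum $\Pil(q_0,q'_0)\times\Pil(q'_0,x,q_1)$ assembles to $\mu^{\P}_{2}\bigl(\cF^{1}[x],D^{\cF(L_0)}\bigr)$, not $\mu^{\P}_{2}\bigl(D^{\cF(L_0)},\cF^{1}[x]\bigr)$. This matters: the formula \eqref{eq:differential_twisted} for the twisted-complex differential of $S\in\Hom_{*}(\cF(L_0),\cF(L_1))$ is $\mu_{1}^{\Tw(\Moore)}S = \mu_{1}^{\P}S + \mu_{2}^{\P}(S,D^{\cF(L_0)}) + \mu_{2}^{\P}(D^{\cF(L_1)},S)$, so the two expressions you supply do not even lie in the correct morphism spaces (the endpoints of the paths fail to match for concatenation unless $L_0=L_1$). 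The geometric identification of \emph{which} strata carry $D^{\cF(L_0)}$ and which carry $D^{\cF(L_1)}$ is right; only the slot in $\mu^{\P}_{2}$ is swapped. One more small typo: the pushforward of the stratum \eqref{eq:half_disc_top_break} under evaluation factors through projection to $\Pilbar(q_0,x_0,q_1)$, not to $\Disc(x_0,x)$.

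Finally, the claim that the prefactor $(-1)^{|x|+(|q_0|+1)(|x|+|q_1|)}$ is ``precisely the unique choice that makes all of these contributions cancel'' is asserted but not verified; this is also what the paper does (it explicitly says the sign discussion is omitted), so you are no worse off, but you should be aware you have not actually carried out the parity computation you describe.
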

\begin{rem}
More precisely, the choice of fundamental chain depends up to sign on the element $ [x] $.   As explained in Appendix \ref{sec:orientations}, the moduli space $ \Pilbar(q_0, x, q_{1}) $ is oriented \emph{relative} the line $\ro_{x}$, which means that an orientation of $\ro_{x}$ induces an orientation of the moduli space, and the wrapped Floer complex is precisely generated by such choices of orientations.
\end{rem}

The strategy is to start with the fundamental chains chosen for the moduli spaces $  \Pilbar(q_i, q_j) $ in Section \ref{sec:at-level-objects}, and to follow the procedure described in that section to choose fundamental chains for the moduli spaces $\Discbar(x_0,x)$.    Next, we choose fundamental cycles for those moduli spaces $\Pilbar(q_0, x,q_1) $  which do not have boundary compatibly with orientations.

Let us consider moduli spaces $\Pilbar(q_0, x,q_1) $ whose boundary only has codimension $1$ strata, which must therefore be products of closed manifolds.  By taking the product of the fundamental chains of the factors in each boundary stratum, we obtain a chain in $C_{*} (\Pilbar(q_0, x,q_1) )$
\begin{multline} \label{eq:sum_boundary_classes}
      \sum_{x_0 \in \Chord(L_0,L_1)}  \pm  [\Pil( q_0 , x_0 , q_1 )]  \times [\Disc(x_0,x) ] +   \sum_{q'_1 \in L_1 \cap Q } [\Pil(q_0, q'_0)] \times   [\Pil( q'_0 , x , q_1 )]   \\ 
+\sum_{q'_0 \in L_0 \cap Q } (-1)^{  1+ |q_0| + |x| + | q'_1|}   [\Pil( q_0 , x , q'_1 )]  \times [\Pil(q'_1, q_1)]
\end{multline}
where the sign in the first summation is $   |x_0| + |q_0| $ if  $\Disc(x_0,x)$ is rigid, and will not enter in any of our constructions otherwise.

Lemma \ref{eq:half_disc_right_break} implies that Equation \eqref{eq:sum_boundary_classes} is a cycle.  We now define the fundamental chain
\begin{equation*}
  [\Pil( q_0 , x , q_1 )] 
\end{equation*}
to be any chain whose boundary is Equation \eqref{eq:sum_boundary_classes}.

\begin{proof}[Proof of Lemma \ref{lem:fundamental_chains_linear_map}]
Consider one of the strata of $\partial \Pilbar(q_0, x,q_1)  $ listed in Equation \eqref{eq:half_disc_top_break}, for which the moduli space $ \Disc(x_0,x) $ is not rigid.  Since the fundamental chain of this stratum is obtained by taking the products of the fundamental chains on each factor, its image in $C_{*}( \Omega(q_0,q_1) )  $ is degenerate, and hence vanishes.  We conclude that such strata do not contribute to $ \partial \ev_{*}(  [\Pilbar(q_0, x,q_1) ])  $.

 Consulting Equation \eqref{eq:differential_twisted} and the definition of $\cF(L)$ in Lemma \ref{lem:definition_twisted_complex_Lag}, we find that, up to signs which we shall ignore, proving that $\cF^{1}$ is a chain map is equivalent  to proving that
\begin{multline} \label{eq:chain_map_equation}
  \mu_{1}^{\P} (  \cF^{1}([x]) ) + \mu_{2}^{\P}\left(\cF^{1}[x] ,  \sum_{q_{0}, q'_{0}} [\Pilbar(q_{0}, q'_{0})] \right) +  \mu_{2}^{\P}\left( \sum_{q_{1}, q'_{1}} [\Pilbar(q'_{1}, q_{1})], \cF^{1} [x] \right) \\ =  \cF^{1}(\mu^{\F}_1 [x]).
\end{multline}
Since it follows essentially by definition that
\begin{equation*}  \partial  \ev_{*}( [\Pilbar(q_0, x,q_1) ]) = \mu_{1}^{\P} (  \cF^{1}([x]) ) , \end{equation*}
we shall now interpret $  \ev_{*}( \partial  [\Pilbar(q_0, x,q_1) ]) $  differently to account for the remaining terms in the equation for a chain map.

Going through the stratification of $ \partial \Pilbar(q_0, x,q_1)  $, we find that the stratum \eqref{eq:half_disc_top_break} for rigid moduli spaces correspond to $\cF^{1}(\mu^{\F}_1 [x])$, and that the strata \eqref{eq:half_disc_left_break} and \eqref{eq:half_disc_right_break} respectively correspond to the second and third term in the left hand side of Equation  \eqref{eq:chain_map_equation}.
\end{proof}

\subsection{Homotopy between the compositions}
Having constructed the chain map $\cF^{1}$, we would need to check that it respects the product structure which on the source is given by the count of pairs of pants, and on the target is given by concatenation of paths.  As with the failure of associativity on the pair of pants product in the Fukaya category, it is only the map induced by $\cF^{1}$  on cohomology that respects the product structure.  At the chain level, there is a homotopy between the $\cF^{1}(\mu^{\F}_{2}([x_2], [x_1])) $  and $\mu_{2}^{\Tw(\Moore)}(\cF^{1} [x_2], \cF^{1} [x_1]  )  $; these two compositions are represented by the two outermost diagram in Figure \ref{fig:breaking_half_disc_two_inputs}.
\begin{figure}
  \centering
  \includegraphics{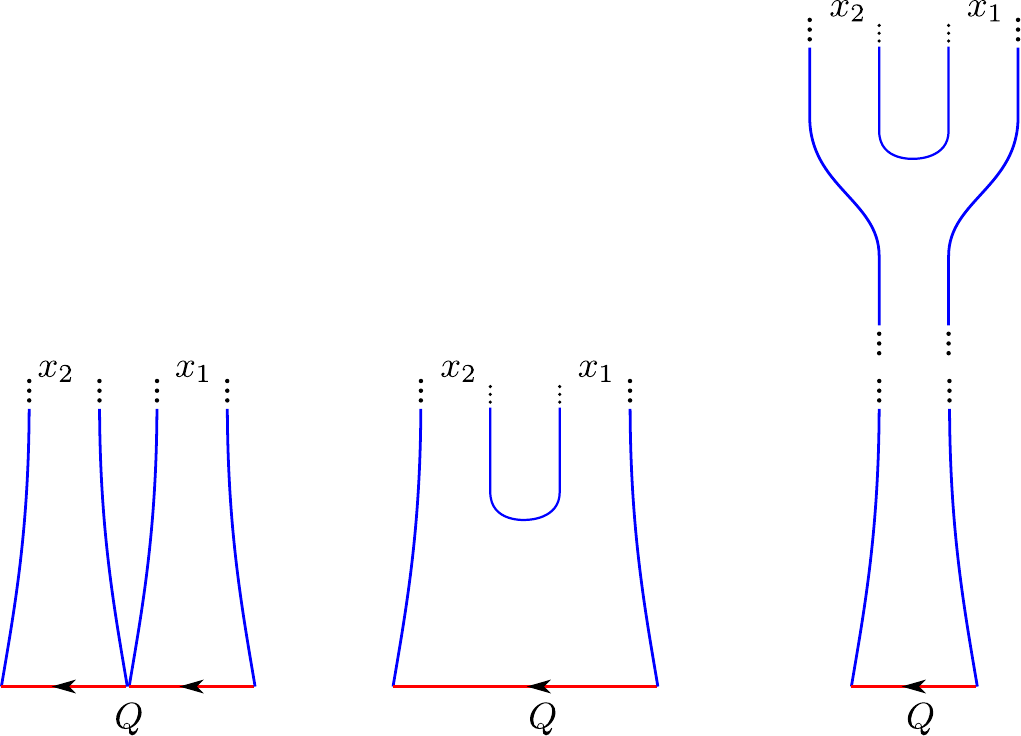}
  \caption{ }
  \label{fig:breaking_half_disc_two_inputs}
\end{figure}

We shall therefore have to introduce a moduli space $ \Pil_{3} $ which is $1$-dimensional, and whose boundary is represented by the two broken curves in Figure \ref{fig:breaking_half_disc_two_inputs}.    As in the proof of the homotopy associativity of the product in the Fukaya category, we shall define a family of Cauchy-Riemann equations on this abstract moduli space, interpolating between the equations on the two broken curves, and the moduli space of solutions to this equation, with appropriate boundary conditions, shall define the desired homotopy.

Since the notion of an $A_{\infty}$ functor requires the construction of a tower of such homotopies, we proceed with describing the construction in the general case:

\subsection{Abstract moduli spaces of half-discs}
Let us write $\Pil_{d}$ for the moduli space of holomorphic discs with $d+2$ marked points, of which $d$ successive ones are distinguished as incoming; the remaining marked points and the segment connecting them are called outgoing.  We identify $\Pil_{0}$ with a point (equipped with a group of automorphisms isomorphic to $\bR$) corresponding to the moduli space of strips. In addition, we fix an orientation on the moduli space $\Pil_{d}$ using the conventions for Stasheff polyhedra in \cite{seidel-book}, and the isomorphism
\begin{equation} \label{eq:half_discs_are_discs} \Pil_{d} \cong \Disc_{d+1}  \end{equation}
taking the incoming marked points on the source to the first $d$ incoming marked point on the target.   In particular, the usual Deligne-Mumford compactification of $\Disc_{d+1}$ yields a compactification of $  \Pil_{d}  $ by adding broken curves.  It shall be important, however, to understand the operadic meaning of the boundary strata of $  \Pilbar_{d} $. 

 If breaking takes place away from the outgoing segment, the topological type of the broken curve is determined by sequences $\{ 1 , \ldots, d_1 \}$ and $\{ 1, \ldots, d_2\}$, such that $d_1 +d_2 = d+1$, and a fixed element $k$ in the first sequence.  These data determine a map
\begin{equation}\label{eq:half_popsicle_break_top} \Pilbar_{d_1} \times \Discbar_{d_2} \to \Pilbar_{d}   \end{equation}
and the simplest such curve is shown in the right of Figure \ref{fig:breaking_half_disc_two_inputs}.

If breaking occurs on the outgoing segment, it is determined by a partition $\{1, \ldots, d \} = \{1, \ldots, d_1 \} \cup \{ d_1 +1, \ldots, d \}$.   Letting $d_2 = d - d_1$, we obtain a map
\begin{equation}\label{eq:half_popsicle_break_middle} \Pilbar_{d_1} \times  \Pilbar_{d_2} \to \Pilbar_{d} .  \end{equation}

\subsection{Floer data for half-discs}
The following definition is the direct extension of Definition \ref{def:floer_datum_disc_1_output} to the moduli space of half-discs.  Note that a universal choice of strip-like ends on the moduli spaces $\Disc_{d}$ induces one on $\Pil_{d-1}$ via the identification of Equation \eqref{eq:half_discs_are_discs}:
\begin{defin} \label{def:floer_datum_half_popsicle}
A  \emph{Floer datum} $D_{T}$ on a stable disc $T \in \Pilbar_{d}$  consists of the following choices on each component:
\begin{enumerate}
\item Weights: A positive real number $w_{k,T}$ associated to each end of $T$ which is assumed for be $1$ for $k= -1,0$.
\item Time shifting maps:  A map $ \rho_{T} \co \partial \bar{T} \to [1,+ \infty)$ which agrees with $w_{k,T}$ near $\xi^{k}$.
\item Hamiltonian perturbation: A map $H_{T} \co T \to \sH(M)$  on each surface such that the restriction of $H_{T}$ to a neighbourhood of the outgoing boundary segment takes value in $ \sH_{Q}(M) $, and whose value near the $\xi^k$ for $1 \leq k \leq d$ is
  \begin{equation*}
     \frac{H \circ \psi^{w_{k,T}}}{w^{2}_{k,T}}
  \end{equation*}
\item Basic $1$-form:  A closed $1$-form $\alpha_{T}$  whose restriction to the complement of the outgoing segment in $\partial T$ and to a neighbourhood of  $\xi^0$ and $\xi^{-1}$  vanishes, and whose pullback under  $ \epsilon^k $ for $1 \leq k \leq d$ agrees with  $ w_{k,T} d \tau $.
\item Almost complex structure:  A map $I_{T} \co T \to \sJ(M)$ whose pullback under $ \epsilon^k $ agrees with  $ (\psi^{w_{k,T}})^{*} I _t$.
\end{enumerate}
\end{defin}

Note that the closedness of $\alpha_{T}$ and the fact that it vanishes near the outgoing marked points implies that it does not vanish on the outgoing boundary segment.  Were it not for the fact that the outgoing segment is mapped to a \emph{compact} Lagrangian, this would cause a problem with compactness.  

As before, we write  $X_{T}$  for the Hamiltonian flow of $H_T$ and consider the differential equation
\begin{equation}
  \label{eq:dbar_half_discs}
  \left(du - X_{T} \otimes \alpha_{T}\right)^{0,1} = 0
\end{equation}
with respect to the $T$-dependent almost complex structure $I_{T}$.

\begin{lem}
The pullback of Equation \eqref{eq:dbar_half_strip} under the end $\epsilon^{k}$ agrees with Equation \eqref{eq:dbar_no_X} if $k = -1, 0$, and otherwise with
\begin{equation*}  
\left(du \circ \xi^{k} - X_{ \frac{H \circ \psi^{w_{k,T}}}{w_{k,T}} } \otimes d \tau \right)^{0,1} = 0.
\end{equation*} \noproof
\end{lem}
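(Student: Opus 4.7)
The lemma is essentially a restatement, in the generalised setting of the moduli space $\Pil_d$, of Lemma \ref{lem:restriction_half_disc_equation_ends}, and the proof proposal is correspondingly a direct computation: unpack the pullback $(\epsilon^k)^*$ of Equation \eqref{eq:dbar_half_discs} and match each factor against the data specified in Definition \ref{def:floer_datum_half_popsicle}.

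\textbf{Plan for the outgoing ends $k=-1,0$.} The first step is to observe that, by clause (4) of Definition \ref{def:floer_datum_half_popsicle}, the $1$-form $\alpha_T$ vanishes identically in a neighbourhood of $\xi^{-1}$ and $\xi^{0}$. Consequently the inhomogeneous term $X_T \otimes \alpha_T$ pulls back to zero under $\epsilon^{-1}$ and $\epsilon^0$, and Equation \eqref{eq:dbar_half_discs} collapses to $(du)^{0,1}=0$. The second step is to check the almost complex structure: clause (5) and the stipulation in clause (1) that $w_{k,T}=1$ for $k=-1,0$ imply that $(\epsilon^k)^* I_T = (\psi^{1})^{*} I_t = I_t$. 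Putting these together yields the pullback of Equation \eqref{eq:dbar_no_X} on $Z_{\pm}$.

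\textbf{Plan for an incoming end $k\ge 1$.} The main calculation is to compare the pulled-back Hamiltonian term to the expression in the statement. From clauses (3) and (4) we have $(\epsilon^k)^*\alpha_T = w_{k,T}\,d\tau$ and $(\epsilon^k)^*H_T = \frac{H\circ \psi^{w_{k,T}}}{w_{k,T}^{2}}$, so the Hamiltonian term pulls back to
\begin{equation*}
X_{\frac{H\circ\psi^{w_{k,T}}}{w_{k,T}^{2}}}\otimes w_{k,T}\,d\tau.
\end{equation*}
The key identity to invoke is the linearity of the symplectic gradient, $X_{cF}=cX_F$ for a constant $c$, which gives $w_{k,T}\cdot X_{H\circ\psi^{w_{k,T}}/w_{k,T}^{2}} = X_{H\circ\psi^{w_{k,T}}/w_{k,T}}$. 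Combined with clause (5), which identifies the pulled-back almost complex structure with $(\psi^{w_{k,T}})^{*}I_t$, this reproduces the equation stated in the lemma.

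\textbf{Where the work sits.} Strictly speaking there is no obstacle: each of the two assertions is a one-line identification once the conditions of Definition \ref{def:floer_datum_half_popsicle} are written out. The only point where one must be careful is the rescaling identity relating $X_{H_T}$ to $X_{H\circ\psi^{w_{k,T}}/w_{k,T}}$, because the ratio between the two differs by a single factor of $w_{k,T}$ which is absorbed by the factor of $w_{k,T}$ coming from the pullback of $\alpha_T$. This bookkeeping is precisely what motivates the normalisation $H/w^2$ in clause (3) of the definition, and serves as a consistency check that the conventions in Definition \ref{def:floer_datum_half_popsicle} have been set up correctly.
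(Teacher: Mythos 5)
Your proof is correct, and since the paper leaves this lemma as an immediate consequence of Definition \ref{def:floer_datum_half_popsicle} (marking it \noproof, in parallel with Lemma \ref{lem:equation_restricts_to_ends}), the direct unpacking you give is exactly the intended argument: vanishing of $\alpha_T$ and $w_{k,T}=1$ for the outgoing ends, and the rescaling identity $w_{k,T}\cdot X_{H\circ\psi^{w_{k,T}}/w_{k,T}^2}=X_{H\circ\psi^{w_{k,T}}/w_{k,T}}$ together with $(\epsilon^k)^*\alpha_T=w_{k,T}\,d\tau$ for the incoming ends. You also correctly read through two typos in the statement — the equation to be pulled back is \eqref{eq:dbar_half_discs}, not \eqref{eq:dbar_half_strip}, and $du\circ\xi^k$ should be $du\circ\epsilon^k$ as in the displayed equation of Lemma \ref{lem:equation_restricts_to_ends}.
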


In order for the count of solutions to Equation \eqref{eq:dbar_half_discs} to define the desired homotopies, we must choose its restriction to the boundary strata of the moduli spaces in a compatible way.  In the case where $d=2$, the moduli space is an interval, whose two endpoints may be identified with the products $\Pil_{1} \times \Pil_{1} $  and $ \Pil_{1} \times \Disc_{2} $ (see Figure \ref{fig:breaking_half_disc_two_inputs}).    Our discussions in Sections \ref{sec:comp-wrapp-fukaya} and \ref{sec:moduli-spaces-half} fix Floer data respectively on the unique elements of $ \Disc_{2}  $ and  $\Pil_{1}$.    In the case of $ \Pil_{1} \times \Pil_{1} $ the equations on both sides of the node agree, in the coordinates coming form strip-like ends, with Equation  \eqref{eq:dbar_half_strip}, so we may glue the corresponding Floer data.

On the other hand, the equations on the two sides of the node of the broken curve representing an element of $ \Pil_{1} \times \Disc_{2} $ only agree up to applying the Liouville flow, since the weight on the input is $1$ while the weight of the output is $2$.  Our conventions that the weights $w_{0,T} $ and $w_{-1,T}$ are always equal to $1$ imply that we must apply the Liouville flow $\psi^{1/2}$ to the equation imposed on  $ \Disc_{2} $ in order for gluing to make sense.

In this way, we obtain Floer data in a neighbourhood of $\partial \Pilbar_{2} $, which may be extended to the interior of the moduli space.  Having fixed this choice, we proceed inductively for the rest of the moduli spaces:
\begin{defin}
A   \emph{universal and conformally consistent}  choice of Floer data for the homomorphism $\cF$,  is a choice $\bfD_{\cF}$  of such Floer data  for every integer $d \geq 1$, and every (representative) of an element of  $\Pilbar_{d}$ which varies smoothly over this compactified moduli space, whose restriction to a boundary stratum is conformally equivalent to the product of Floer data coming from either $\bfD_{\mu}$ or a lower dimensional moduli space  $\Pilbar_{d}$, and which near such a boundary stratum agrees to infinite order with the Floer data obtained by gluing.
\end{defin}
The consistency condition implies that each irreducible component of a curve representing a point in the stratum \eqref{eq:half_popsicle_break_top} carries the restriction of the datum $ \bfD_{\mu} $if it comes from the factor $ \Discbar_{d_2} $, and the restriction of the datum $\bfD_{\cF}$  if it comes from $ \Pilbar_{d_1} $, up to  conformal equivalence which  is fixed near by the requirement that $w_{0,T}$  and $w_{-1,T}$ are both equal to $1$. 

\subsection{Moduli spaces of half-discs} 
Given a sequence of chords $\vx = \{ x_{k} \in \Chord(L_{k-1},L_k) \} $ if $1 \leq k \leq d  $, and a pair of intersection points $q_0 \in L_0 \cap Q $  and $q_d \in L_d \cap Q$, we define $ \Pil(q_0, \vx, q_d ) $ to be the moduli space of solutions to Equation \eqref{eq:dbar_half_discs} whose source is an arbitrary element $T \in \Pil_{d}$, with boundary conditions 
\begin{equation*}
\begin{cases} u(z) \in \psi^{\rho(z)} ( L_0 )  & \textrm{if $z \in \partial T$ lies on the segment between $\xi^0$ and $\xi^1$} \\
u(z) \in \psi^{\rho(z)} (  L_k) & \textrm{if $1 \leq k < d$ and $z \in \partial T$ lies on the segment between $\xi^{k}$ and $\xi^{k+1}$} \\
u(z) \in \psi^{\rho(z)} (  L_d) & \textrm{if $z \in \partial T$ lies on the segment between $\xi^{-1}$ and $\xi^d$} \\
u(z) \in Q & \textrm{if $z \in \partial T$ lies on the outgoing segment.}
\end{cases}
\end{equation*}
and with asymptotic conditions
\begin{align*}
    \lim_{s \rightarrow + \infty} u(\epsilon^k(s,t )) &= x_k(t)  \textrm{ if $1 \leq k \leq d$} \\
 \lim_{s \rightarrow \infty} u(\epsilon^0(s,t )) & = q_0 \\
 \lim_{s \rightarrow - \infty} u(\epsilon^1(s,t )) & = q_d.
\end{align*}

The standard Sard-Smale type argument implies:
\begin{lem}
For generic choices of universal Floer data, $\Pil(q_0, \vx, q_d)$ is a smooth manifold of dimension
\begin{equation}  d  - 1 + |q_0|- |q_d| -  \sum_{k=1}^{d} |x_k|. \end{equation} \noproof
\end{lem}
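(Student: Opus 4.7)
The argument is the standard combination of a parametric Sard-Smale transversality result and an index computation for the linearised Cauchy-Riemann operator. I form the universal moduli space of triples $(T, u, \bfD_{\cF})$, where $T$ ranges over $\Pil_d$, $\bfD_{\cF}$ ranges over a Banach manifold of universal and conformally consistent perturbations of some finite Floer regularity, and $u$ is a $W^{1,p}$-map $T \to M$ satisfying the boundary and asymptotic conditions prescribed in the definition of $\Pil(q_0, \vx, q_d)$ together with Equation \eqref{eq:dbar_half_discs}. The linearisation of the associated Banach-bundle section at $(T,u,\bfD_{\cF})$ couples the linearised Cauchy-Riemann operator $D_u$ with the derivative of the section with respect to variations of $H_T$ and $I_T$ in the interior of $T$.

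Surjectivity of this universal linearisation is obtained by a somewhere-injectivity argument in the spirit of \cite{FHS}: since each chord $x_k$ is non-degenerate and the exactness assumption on the Lagrangians forbids $u$ from being locally constant, unique continuation guarantees that a generic interior point $z \in T$ maps to a point of $M$ avoiding the compact Lagrangian $Q$ and each of the Lagrangians $\psi^{\rho_T(z)}(L_k)$ appearing in the boundary data. At such a point the values of $H_T$ and $I_T$ can be freely perturbed consistently with all the conditions in Definition \ref{def:floer_datum_half_popsicle}, and this suffices to span the cokernel of $D_u$. Sard-Smale then yields a comeagre set of regular Floer data, on which $\Pil(q_0,\vx,q_d)$ is a smooth manifold of dimension equal to the Fredholm index of $D_u$ plus the dimension $d-1$ of $\Pil_d$. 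For the index of $D_u$ I decompose $T$ into its strip-like ends and a compact central region and use additivity of the Fredholm index under gluing: each incoming end $\xi^k$ contributes $-|x_k|$ by the spectral-flow interpretation of the Maslov grading assigned to $x_k$ in Section \ref{sec:review-wrapped}, while the two outgoing corners $\xi^0,\xi^{-1}$ together with the outgoing boundary segment mapping to $Q$ contribute $|q_0|-|q_d|$ in the same way that strips bounded by $Q$ and $L$ have index $|q^i|-|q^j|$ in Proposition \ref{prop:moduli_spaces_strips_manifold}. The vanishing of the relative first Chern class imposed by Condition \eqref{eq:relative_pin_graded} removes the usual trivialisation-dependent ambiguity, so the contributions sum cleanly to the claimed formula.

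The main obstacle is the somewhere-injectivity step, which requires care because the outgoing segment must lie on $Q$ and $H_T$ is required to vanish on $Q$ near that segment, so one might fear that admissible variations are too restricted. This restriction, however, is only a boundary constraint along the outgoing segment, leaving variations in the interior of $T$ completely free; hence the perturbation argument at a generic interior point $z$ is unaffected. With this verified, the remainder of the argument is a direct adaptation of Lemma \ref{lem:regularity_compactness_2_inputs} and the transversality discussion of Section \ref{sec:a_infty-structure}, and yields the stated dimension formula.
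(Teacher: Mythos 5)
Your overall strategy — a parametric Sard--Smale argument on a universal moduli space followed by an index computation — is exactly the ``standard'' argument the paper invokes without proof, so the approach is the right one.

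One step in your surjectivity argument is slightly off-target, though the conclusion survives. You say that unique continuation forces a generic interior $z\in T$ to have $u(z)$ \emph{avoid} $Q$ and the translates $\psi^{\rho_T(z)}(L_k)$, and that this is what lets you perturb $H_T$ and $I_T$. This is not the relevant point: the constraints in Definition \ref{def:floer_datum_half_popsicle} on $H_T$ and $I_T$ are conditions on where they sit as functions of the \emph{domain} variable $z$ (fixed near the strip-like ends $\epsilon^k$, valued in $\sH_Q(M)$ near the outgoing boundary segment), not conditions that depend on whether $u(z)$ lies on a Lagrangian in $M$. In fact $\rho_T$ is only defined on $\partial \bar T$, so $\psi^{\rho_T(z)}(L_k)$ does not even make sense for interior $z$. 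The correct use of unique continuation is for a putative cokernel element $\eta$ of the universal linearisation: $\eta$, being a solution of a formal adjoint elliptic equation, cannot vanish on an open set, so one may find an interior point $z$ away from the ends and from the outgoing segment with $\eta(z)\neq 0$ and $du(z)\neq 0$, and there $H_T$ and $I_T$ are completely unconstrained, which is all that is needed to produce a perturbation pairing nontrivially with $\eta$. With this correction your worry about the $\sH_Q(M)$ constraint near the outgoing segment is indeed resolved for the reason you give (it lives on a collar of the boundary of the domain), and the rest of your argument, including the inductive choice of conformally consistent data à la Lemma \ref{lem:abundance_floer_data} and the spectral-flow index count giving $d-1+|q_0|-|q_d|-\sum_k|x_k|$, is fine.
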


 Moreover, the compactification of the abstract moduli space of  half-discs extends to a Gromov compactification of the moduli spaces  $\Pil(q_0, \vx, q_d)$.  The top strata of the boundary have a description analogous to the two types of boundary strata of the abstract moduli spaces;  assume in the first case  that $d_1 + d_2 = d+1$, that $k$ is an integer between $0$ and $d-d_2$.  Consider sequences of chords   $\vx[2]= (x_{k+1}, \ldots, x_{k+d_2}) $ and $\vx[1] = (x_1, \ldots, x_k, y, x_{k+d_2+1}, \ldots, x_{d})$ with $y \in \Chord(L_{k}, L_{k+d_2})$.   By gluing a disc with inputs $\vx[2]$ and output $y$ to a half-disc with inputs $\vx[1]$, we obtain a map
\begin{equation} \label{eq:moduli_space_maps_breaking_top} \Pilbar (q_0, \vx[1], q_d)  \times \Discbar(y,\vx[2]) \to   \Pilbar(q_0, \vx, q_d) \end{equation}

Similarly, given a partition of the inputs  $ \vx[1] = \{ x_1, \ldots, x_{d_1} \}$ and  $\vx[2] = \{x_{d_1+1}, \ldots, x_d  \} $, and an intersection point $q'_{d_1}$ between $L_{d_1}$ and $Q$, we obtain a map
\begin{equation}  \label{eq:moduli_space_maps_breaking_side} \Pilbar( q_0, \vx[1], q'_{d_1}) \times  \Pilbar( q'_{d_1} , \vx[2], q_d) \to  \Pilbar (q_0, \vx, q_d).  \end{equation}

Using the parametrisation by arc length as in Lemma \ref{lem:evaluate_discs_paths}, we observe the existence of compatible families of evaluation maps to spaces of paths in $Q$:
\begin{lem}
There exists a choice of parametrisations of the outgoing boundary segment of half-discs which yields maps
\begin{equation} \Pilbar(q_0, \vx, q_d) \to \Omega(q_0,q_d) \end{equation}
such that in the setting Equation \eqref{eq:moduli_space_maps_breaking_top} we have a commutative diagram
\begin{equation*}
 \xymatrix{\Pilbar (q_0, \vx[1], q_d)  \times \Discbar(y,\vx[2])  \ar[r] \ar[d] &  \Pilbar(q_0, \vx, q_d)  \ar[d]  \\
 \Pilbar (q_0, \vx[1], q_d)  \ar[r] &  \Omega(q_0,q_d)  } \end{equation*}
while in the setting of Equation \eqref{eq:moduli_space_maps_breaking_side}  the following diagram also commutes
\begin{equation*}
 \xymatrix{  \Pilbar( q_0, \vx[1], q'_{d_1}) \times  \Pilbar( q'_{d_1} , \vx[2], q_d)    \ar[r] \ar[d] & \Pilbar(q_0, \vx, q_d)    \ar[d] \\
 \Omega( q_0, q'_{d_1}) \times  \Omega( q'_{d_1} , q_d)   \ar[r]  &  \Omega(q_0,q_d) . }
\end{equation*} \noproof
\end{lem}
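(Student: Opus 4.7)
The plan is to construct the evaluation maps exactly as in Lemma~\ref{lem:evaluate_discs_paths}, now extended to families of maps whose domains are half-discs with multiple interior punctures. First, I fix once and for all a Riemannian metric on $Q$. For each element $u \in \Pil(q_0,\vx,q_d)$, elliptic regularity applied to the Cauchy--Riemann equation \eqref{eq:dbar_half_discs} shows that the restriction of $u$ to the outgoing boundary segment is a smooth map to $Q$, and the estimates of \cite{RS} (applied near the punctures $\xi^{0}$ and $\xi^{-1}$, where the equation reduces to \eqref{eq:dbar_no_X} with totally real boundary conditions on $Q$) give exponential $C^{\infty}$-decay of the derivatives at the ends. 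Consequently the image of the outgoing segment has finite length, and its arc-length reparametrisation defines a continuous map from a compact interval $[0,R]$ to $Q$ beginning at $q_{0}$ and ending at $q_{d}$, i.e.\ an element of $\Omega(q_{0},q_{d})$.

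Next I extend this evaluation continuously to the Gromov compactification. The argument is identical to the one used in the proof of Lemma~\ref{lem:evaluate_discs_paths}: near any stratum where breaking occurs, the standard exponential energy decay estimate (for instance in the form of estimate $(4.7.13)$ of \cite{MS}) shows that the boundary arc-length parametrisation of a family approaching a broken limit converges uniformly to the concatenation of the arc-length parametrisations on each component. Hence the evaluation map descends to a continuous map $\Pilbar(q_{0},\vx,q_{d}) \to \Omega(q_{0},q_{d})$.

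For the commutativity of the two diagrams, I handle each type of breaking separately. If the breaking is of type \eqref{eq:moduli_space_maps_breaking_top}, the bubble component is a genuine holomorphic disc attached at some interior puncture $\xi^{k}$ with $1\le k \le d$, so it does not meet the outgoing boundary segment; the outgoing segment of the limit curve is therefore precisely the outgoing segment of the half-disc factor $\Pilbar(q_{0},\vx[1],q_{d})$, and its arc-length parametrisation is unchanged. This gives the first commutative square. If instead the breaking is of type \eqref{eq:moduli_space_maps_breaking_side}, the node lies on the outgoing segment, and the outgoing segment of the limit is exactly the concatenation of the outgoing segments of the two half-disc factors; since the total arc-length is additive under concatenation and the concatenation operation on $\Omega$ is defined by juxtaposing the domain intervals, the second diagram also commutes.

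The main subtle point will be the second commutative diagram: one must check that the arc-length parametrisation of the limit genuinely agrees with the Moore concatenation of the two pieces, which requires that no finite length is lost in the limit. This follows from the exponential decay estimate cited above, which controls the total length absorbed near the neck, and from the choice of parametrisation being length-based (rather than, for instance, depending on a family of choices of diffeomorphisms which would have to be chosen consistently with breaking); this is precisely the motivation for Latschev's suggestion recalled in Section~\ref{sec:at-level-objects}. Once these two compatibilities are established, iterated application produces the whole compatible family of evaluation maps.
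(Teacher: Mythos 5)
Your proof is correct and follows exactly the route the paper indicates for this lemma, namely the arc-length parametrisation of the outgoing boundary segment (as introduced in Section~\ref{sec:at-level-objects} and explicitly invoked just before the lemma), with the compactification/concatenation argument via the exponential decay estimates of \cite{RS} and estimate $(4.7.13)$ of \cite{MS}. One terminological slip: the punctures $\xi^{1},\dots,\xi^{d}$ where the $\Discbar(y,\vx[2])$ bubble attaches are \emph{boundary} punctures, not interior ones; the relevant point, which you do make, is simply that they are disjoint from the outgoing segment.
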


The proof of the next result again follows from standard gluing techniques which are reviewed in Appendix \ref{app:manifold-with-boundary}. The fact that we are using Hamiltonians which are not linear along the infinite cone might a priori imply that we lose compactness.  However, the argument used in proving Lemma \ref{lem:gromov_compactness} applies here as well.

\begin{lem} \label{lem:topological_manifold_boundary_half_popsicle}
The moduli space $\Pilbar(q_0, \vx, q_d)$ is a compact topological manifold with boundary stratified by smooth manifolds.  The codimension $1$ strata are the images of the inclusion maps \eqref{eq:moduli_space_maps_breaking_top},  \eqref{eq:moduli_space_maps_breaking_side}. \noproof
\end{lem}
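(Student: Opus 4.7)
The plan is to establish three things in succession: a priori compactness of the space of solutions, smoothness of the open stratum and of each lower dimensional stratum for generic Floer data, and a local gluing description near each codimension one face upgrading the stratified structure to that of a topological manifold with boundary.

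I would begin with compactness. A standard energy identity, using that $\alpha_T$ is closed and vanishes at every end except the outgoing one --- where the boundary condition is the \emph{closed} Lagrangian $Q$ --- bounds the geometric energy of any solution in terms of the actions $\Action(x_k)$ and the differences of the primitives $f_{L_k}$ and $f_Q$, uniformly over the moduli space. The Hamiltonian $H$ grows only quadratically at infinity, so the usual contact-type maximum principle does not immediately apply; instead I would repeat verbatim the integrated Stokes argument of Lemma \ref{lem:gromov_compactness}. On a slice $\partial M^{in} \times \{r\}$ lying outside a compact set containing the asymptotic data, the condition that $I_T$ be of contact type together with the positivity of the Reeb component of $X_T$ forces
\[
0 < \int_{\partial_r \Sigma} \lambda \circ J_z \circ du \circ j
\]
on the cut-off surface $\Sigma$, yielding the same contradiction as in that lemma. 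This confines every solution to a compact subset of $M$ depending only on the asymptotic data. Given the resulting uniform $C^0$ bound, the standard Gromov compactness for perturbed Cauchy-Riemann equations with Lagrangian boundary produces limits in the bordification by broken curves; exactness of $Q$ and of each $L_k$ excludes disc and sphere bubbling, so the only degenerations are breakings of strips at the Hamiltonian ends $\xi^1,\dots,\xi^d$ and breakings along the outgoing boundary segment mapping to $Q$, giving respectively the images of \eqref{eq:moduli_space_maps_breaking_top} and \eqref{eq:moduli_space_maps_breaking_side}.

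For regularity, I would fix a generic universal and conformally consistent choice $\bfD_{\cF}$: the linearised operator at each solution in the open stratum $\Pil(q_0,\vx,q_d)$ is surjective by the Sard-Smale argument already invoked in the preceding dimension computation. Applying the same argument factor by factor on a broken configuration, which is legitimate because the restriction of $\bfD_{\cF}$ to a boundary stratum is, up to conformal equivalence, a product of previously chosen Floer data, shows that every lower stratum is a smooth manifold of the expected dimension, and inductively lets one arrange regularity on all strata simultaneously.

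Finally, the topological manifold structure. Near any point in a codimension one face, standard pregluing plus Newton iteration produces a family of genuine solutions parametrised by a small gluing parameter in $(0,\varepsilon)$, together with a continuous extension at $0$ identifying the limit with the broken configuration; this is the collar chart. Since the gluing map need not be smooth in the parameter, this chart is only $C^0$, which matches the claim of a topological manifold with smooth strata rather than a $C^\infty$ manifold with corners. The point I expect to require the most care is the compactness argument along the outgoing segment: the basic $1$-form $\alpha_T$ does \emph{not} vanish there, so one must verify that no non-trivial bubble forms along this segment which escapes to infinity in $M$. This is ruled out precisely by the compactness of $Q$ together with the contact-type maximum principle recalled above, since any such bubble would have to cross the cylindrical end where that estimate provides the necessary barrier. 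Beyond this, the analytical content reduces to the template laid out in Appendix \ref{app:manifold-with-boundary}, which is how I would organise the final write-up.
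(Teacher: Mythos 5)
Your proposal is correct and follows the same route the paper indicates for this lemma: compactness via the integrated Stokes/maximum-principle argument carried over from Lemma~\ref{lem:gromov_compactness}, transversality of the open and lower strata by Sard--Smale over a generic consistent Floer datum, and the collar structure from the pregluing-plus-implicit-function-theorem template of Appendix~\ref{app:manifold-with-boundary}. You also correctly isolate the one non-standard point, namely that $\alpha_T$ does not vanish on the outgoing boundary segment, and resolve it exactly as the paper does, by appealing to the compactness of $Q$.
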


\subsection{Compatible choices of fundamental chains}
In order to define operations using the moduli space $\Pilbar(q_0, \vx, q_d)  $ we must compare the orientation of the boundary of this moduli space with that of the interior.  As with other sign-related issues, the proof of this result is delayed to Appendix \ref{sec:signs}:
\begin{lem} \label{lem:product_orientation_half_discs}
The product orientation on the strata \eqref{eq:moduli_space_maps_breaking_side} differs from the boundary orientation by
\begin{equation}  \label{eq:sign_flat} \flat = (d_2+1)\left(|q_0| + |q'_{d_1}|+  \sum_{i=1}^{d_1} | x_{i}| \right) + d_1 +1 , \end{equation}
while the difference between the two orientations on the stratum \eqref{eq:moduli_space_maps_breaking_top} is given by
\begin{equation} \label{eq:sign_sharp} \sharp=  d_2( |q_0| + \sum_{j=1}^{k+d_2} |x_j|  ) + d_{2}(d-k) + k +1   \end{equation}
whenever $\Discbar(x,\vx[2]) $  is rigid  and  $x$ is the $k+1$\st element of $\vx[1]$. 
\end{lem}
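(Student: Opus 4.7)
Both formulas $\flat$ and $\sharp$ are sign computations in the framework of coherent orientations set up in Appendix \ref{sec:orientations} and \ref{sec:signs}. The plan is to present each moduli space's orientation as a specific isomorphism between the determinant line of its linearised Cauchy-Riemann operator and a tensor product of the rank one lines $\ro_{x_k}$ and $\ro_{q_i}$ attached to the asymptotic data. Near a codimension one boundary stratum, the gluing theorem provides a canonical isomorphism between the determinant line of the ambient moduli space and the tensor product of those of the two broken factors, while the boundary orientation is prescribed by the ``outward normal first'' convention. The signs $\flat$ and $\sharp$ record the Koszul signs produced by reconciling these two descriptions, in direct analogy with Lemmas \ref{lem:product_orientations_strips} and \ref{lem:orientaiton_error_moduli_half_disc_1_puncture}.

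For the side breaking \eqref{eq:moduli_space_maps_breaking_side}, after writing each of $\Pilbar(q_0, \vx[1], q'_{d_1})$, $\Pilbar(q'_{d_1}, \vx[2], q_d)$ and $\Pilbar(q_0, \vx, q_d)$ in the standard tensor form, I would observe that gluing at $q'_{d_1}$ introduces a cancellation of $\ro_{q'_{d_1}}$ against its dual at the interface of the two tensor products. Commuting this cancelling pair into position past the intervening factors from the first half-disc (namely those coming from $\ro_{q_0}, \ro_{x_1}, \ldots, \ro_{x_{d_1}}$) yields the Koszul parity $|q_0| + |q'_{d_1}| + \sum_{i=1}^{d_1}|x_i|$. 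The coefficient $d_2+1$ captures the contribution from transposing this parity past the tangent directions of the second factor, whose dimension has parity $d_2+1$ modulo the degree contributions already accounted for; the residual $d_1+1$ comes from the outward normal convention.

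For the top breaking \eqref{eq:moduli_space_maps_breaking_top}, rigidity of $\Discbar(x, \vx[2])$ gives through Equation \eqref{eq:orientation_higher_product} a canonical isomorphism between $\ro_y$ and a reordered tensor product over the $\ro_{x_{k+j}}$ for $1 \leq j \leq d_2$. Substituting this into the orientation of $\Pilbar(q_0, \vx[1], q_d)$ converts the single $y$ slot at position $k+1$ of $\vx[1]$ into a block of $d_2$ consecutive slots, and matching this with the orientation of $\Pilbar(q_0, \vx, q_d)$ requires moving the block past the factors corresponding to $q_0, x_1, \ldots, x_k$, which contributes $d_2\bigl(|q_0| + \sum_{j=1}^{k}|x_j|\bigr)$. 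The sign $(-1)^\dagger$ from Equation \eqref{eq:dagger_sign} in the definition of $\mu^\F_{d_2}$ applied to $\vx[2]$, combined with the rigidity identity $|y| = 2 - d_2 + \sum_{j=k+1}^{k+d_2}|x_j|$, contributes $d_2 \sum_{j=k+1}^{k+d_2}|x_j|$; the remaining $d_2(d-k) + k + 1$ comes from the outward normal convention together with the reorderings needed to match the tensor factors for the subsequent chords $x_{k+d_2+1}, \ldots, x_d$.

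The main obstacle will be the meticulous bookkeeping of these Koszul signs, particularly the contributions coming from the moving Lagrangian boundary conditions used in Section \ref{sec:a_infty-structure}: one must verify that the rescaling $\psi^C$ arising from conformal consistency of Floer data preserves brane structures up to a well-defined sign, so that it does not introduce hidden orientation discrepancies between half-discs and the disc factors breaking off at an input. Once this compatibility is established, parallel to the verification implicit in the definition of $\mu^\F_d$, all remaining sign contributions reduce to combinatorial parity checks that yield precisely $\flat$ and $\sharp$.
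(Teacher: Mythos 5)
Your treatment of the side breaking \eqref{eq:moduli_space_maps_breaking_side} is essentially the paper's calculation: one writes both factors and the ambient space in the tensor form \eqref{eq:orientation_higher_maps}, moves $\lambda(\Pil_{d_2})$ (whose dimension $d_2-1$ has parity $d_2+1$) past the block $\ro_{q_0}\otimes\ro_{x_1}^{\vee}\otimes\cdots\otimes\ro_{x_{d_1}}^{\vee}\otimes\ro_{q'_{d_1}}^{\vee}$, cancels $\ro_{q'_{d_1}}^{\vee}\otimes\ro_{q'_{d_1}}$, and attributes the residual $d_1+1$ to the abstract Stasheff orientations. Your phrasing is looser than the paper's but the decomposition of the sign is the same.

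For the top breaking \eqref{eq:moduli_space_maps_breaking_top} there is a genuine conceptual error. You invoke the sign $(-1)^{\dagger}$ from Equation \eqref{eq:dagger_sign}, but $\dagger$ is a \emph{bookkeeping sign inserted into the definition of the operation} $\mu^{\F}_{d}$, precisely to cancel Koszul discrepancies; it is \emph{not} part of the comparison between product and boundary orientations of moduli spaces, which is what Lemma \ref{lem:product_orientation_half_discs} asserts. The term $d_2\sum_{j=k+1}^{k+d_2}|x_j|$ arises instead as follows: the Koszul sign of moving $\lambda(\Disc_{d_2})$ (parity $d_2$) past the block $\ro_{q_0}\otimes\ro_{x_1}^{\vee}\otimes\cdots\otimes\ro_{x_k}^{\vee}\otimes\ro_{y}^{\vee}$ is $d_2\bigl(|q_0|+\sum_{j=1}^{k}|x_j|+|y|\bigr)$, and one then substitutes the rigidity relation $|y|=2-d_2+\sum_{j=k+1}^{k+d_2}|x_j|$, contributing $d_2|y|\equiv d_2+d_2\sum_{j=k+1}^{k+d_2}|x_j|\pmod{2}$. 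The rigidity identity you cite is indeed the right ingredient, but it enters through this purely geometric Koszul reshuffling, not through $\dagger$. Once you replace the spurious appeal to $\dagger$ with this substitution, your accounting of $\sharp$ closes; as written, however, the proof of the second formula conflates the definition of the $A_\infty$ operations with the orientation statement that underlies it.
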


Following the same strategy as for the proof of Lemma \ref{lem:fundamental_chains_linear_map}, we may equip these moduli spaces with appropriate fundamental chains in the cubical theory:
\begin{lem} \label{lem:good_choice_fundamental}
There exists a family of fundamental chains
\begin{equation} [\Pilbar(q_0, \vx, q_d)] \in C_{*}(\Pilbar(q_0, \vx, q_d)  )  \end{equation}
such that
\begin{multline}  \label{eq:boundary_fundamental_chain} \partial [ \Pilbar(q_0, \vx, q_d)] = \sum_{\substack{ 0 \leq d_1 \leq  d  \\  q'_{d_1} \in L_{d_1} \cap Q } } (-1)^{\flat}   [   \Pilbar( q_0, \vx[1], q'_{d_1})] \times  [  \Pilbar( q'_{d_1} , \vx[2], q_d)] + \\  \sum_{y \in  \Chord(L_{k}, L_{k+d_2}) } (-1)^{\sharp} [ \Pilbar (q_0, \vx[1], q_d)  ]  \times [\Discbar(y,\vx[2])]  . \end{multline} \noproof
\end{lem}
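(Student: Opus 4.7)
The plan is to proceed by induction on the dimension $n = d-1 + |q_0| - |q_d| - \sum_k |x_k|$ of $\Pilbar(q_0,\vx,q_d)$, in parallel with the choice of fundamental chains for the moduli spaces $\Discbar(x_0,\vx)$ already fixed inductively in Section \ref{sec:a_infty-structure} via the analogue of Equation \eqref{eq:boundary_strips_breaking}. For the base case, one collects those moduli spaces which are closed manifolds (either because $n=0$, or because no codimension $1$ degeneration is topologically possible for the given data): by Lemma \ref{lem:topological_manifold_boundary_half_popsicle} these have a well-defined fundamental cycle in their cubical chain complex, determined by the orientation coming from the choice of brane data on $Q$ and on the $L_k$.

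For the inductive step, assume fundamental chains have been chosen for all $\Pilbar$ and $\Discbar$ of dimension less than $n$, satisfying Equation \eqref{eq:boundary_fundamental_chain} and its analogue \eqref{eq:boundary_strips_breaking} respectively. Define $B(q_0,\vx,q_d) \in C_{n-1}(\partial \Pilbar(q_0,\vx,q_d))$ to be the right hand side of Equation \eqref{eq:boundary_fundamental_chain}, pushed forward along the inclusions \eqref{eq:moduli_space_maps_breaking_top} and \eqref{eq:moduli_space_maps_breaking_side}. The central step is to verify that $\partial B = 0$. Applying $\partial$ to each product $[A] \times [A']$ in $B$ and using the inductive hypothesis expresses $\partial B$ as a signed sum indexed by the codimension $2$ strata of $\Pilbar(q_0,\vx,q_d)$; these strata are of three types according to whether the two breakings both occur away from the outgoing segment, both occur on it, or one of each. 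Each such stratum appears exactly twice in this expansion, once from each of the two ways of decomposing the corresponding iterated breaking, and the claim reduces to the statement that the two contributions cancel. The signs of these two contributions are given by sums of copies of $\flat$, $\sharp$ (from Lemma \ref{lem:product_orientation_half_discs}), and the sign $(-1)^{|q^i|+|q^k|}$ from Equation \eqref{eq:boundary_strips_breaking}. The cancellation is then a bookkeeping verification: expand both signs in terms of the degrees and dimensions of the pieces, and use the dimension formula for the moduli spaces of discs and half-discs together with the associativity built into the product orientation on triple products.

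Once $\partial B = 0$ is established, $B$ is a closed class in $C_{n-1}(\partial \Pilbar(q_0,\vx,q_d))$. By Lemma \ref{lem:topological_manifold_boundary_half_popsicle}, the boundary of $\Pilbar(q_0,\vx,q_d)$ is the union of the closures of the strata \eqref{eq:moduli_space_maps_breaking_top} and \eqref{eq:moduli_space_maps_breaking_side}, each carrying its product orientation corrected by $\sharp$ and $\flat$; hence $B$ represents the fundamental class of $\partial \Pilbar(q_0,\vx,q_d)$. Since $\Pilbar(q_0,\vx,q_d)$ is a compact topological manifold with boundary, its relative fundamental class in $H_n(\Pilbar,\partial \Pilbar)$ has boundary equal to $[\partial \Pilbar]$, so we may pick a cubical chain $[\Pilbar(q_0,\vx,q_d)] \in C_n(\Pilbar(q_0,\vx,q_d))$ whose cubical boundary equals $B$. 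This completes the induction.

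The main obstacle is the bookkeeping of signs in verifying $\partial B = 0$, which requires tracking the three types of codimension $2$ degenerations and matching the combinations of $\flat$, $\sharp$, and disc-breaking signs modulo $2$; this is essentially the same type of check that underlies the $A_\infty$-equation \eqref{eq:a_infty_property} for $\mu^{\F}_d$, transported to the half-disc setting.
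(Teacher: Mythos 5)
Your proposal is correct and follows the same inductive strategy that the paper indicates by reference (``Following the same strategy as for the proof of Lemma~\ref{lem:fundamental_chains_linear_map}''): build fundamental chains by induction on dimension, verify that the candidate boundary expression is a cycle by expanding over codimension-two strata and matching each twice-occurring term, and then choose a bounding chain using the compact-manifold-with-boundary structure from Lemma~\ref{lem:topological_manifold_boundary_half_popsicle}. The only minor inaccuracy is the suggestion that fundamental chains for the $\Discbar(y,\vx[2])$ were already fixed in Section~\ref{sec:a_infty-structure} — they are not (that section uses orientation lines, not chains) and must be chosen as part of the same simultaneous induction, which your inductive hypothesis does in fact correctly account for.
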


\subsection{Definition of the functor}

We now define a map
\begin{align} 
\cF^{d} \co  CW^{*}_{b}(L_{d-1}, L_d)    \otimes \cdots \otimes    CW^{*}_{b}(L_1, L_2)  \otimes  CW^{*}_{b}(L_0,L_1 )    & \to \Hom_{*}(\cF(L_0), \cF(L_d)) \\
[ x_d ]\otimes \cdots \otimes [x_1] & \to \bigoplus_{\substack{q_0 \in L_0 \cap Q \\ q_d \in L_d \cap Q}} (-1)^{\ddagger} \ev_{*} ( [\Pilbar_{\bfp}(q_0, \vx, q_d)]) .
\end{align}
where the sign is given by 
\begin{align*}
\ddagger & = \sum_{1 \leq k \leq d} k |x_k| + (d+1)|q_d|   + (|q_0| + d ) \dim\left(  \Pil(q_0, \vx, q_d)\right)
\end{align*}
\begin{lem}
The collection of maps $\cF^{d}$ satisfy the $A_{\infty}$ equation for functors \begin{equation} \label{eq:a_infty_functor_equation} \sum_{d_1+d_2 = d+1} (-1)^{\maltese_i} \cF^{d}( \id^{d_1 - i} \otimes \mu^{\F}_{d_2} \otimes \id^{i}) = 
\mu_{1}^{\Tw(\Moore)}  \cF^{d} + \sum_{d_1 + d_2 = d} \mu_{2}^{\Tw(\Moore)}(\cF^{d_2} , \cF^{d_1}).   \end{equation}
\end{lem}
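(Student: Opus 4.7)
The plan is to derive the $A_{\infty}$-functor equation by computing $\partial \ev_{*}([\Pilbar(q_0,\vx,q_d)])$ in two different ways on each one-dimensional component. Fix a chord sequence $\vx = (x_1,\ldots,x_d)$ and intersection points $q_0 \in L_0 \cap Q$, $q_d \in L_d \cap Q$ with $d - 1 + |q_0| - |q_d| - \sum |x_k| = 1$. On the one hand, $\ev_{*}$ is a chain map between normalised cubical chains, so $\partial \ev_{*}([\Pilbar(q_0,\vx,q_d)]) = \ev_{*}(\partial [\Pilbar(q_0,\vx,q_d)])$; on the other, Lemma~\ref{lem:good_choice_fundamental} expresses the right-hand side as a signed sum over the two families of codimension-one boundary strata described in \eqref{eq:moduli_space_maps_breaking_top} and \eqref{eq:moduli_space_maps_breaking_side}. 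Multiplied by the signs $\ddagger$ and summed over all $(q_0,q_d)$, these two expressions for $\partial \ev_{*}([\Pilbar])$ must agree, and this is the identity I shall match term by term against \eqref{eq:a_infty_functor_equation}.

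For the top-breaking strata $\Pilbar(q_0,\vx[1],q_d) \times \Discbar(y,\vx[2])$, the evaluation map factors through the projection onto the first factor, so the pushforward of the product fundamental chain is degenerate (and hence zero in the normalised cubical complex) unless $\Discbar(y,\vx[2])$ is rigid. For rigid $\Discbar$, counting its elements with signs and summing over $y$ exactly reproduces $\cF^{d_1}(x_d,\ldots,\mu^{\F}_{d_2}(x_{k+d_2},\ldots,x_{k+1}),\ldots,x_1)$, matching the left-hand side of \eqref{eq:a_infty_functor_equation}. For the side-breaking strata $\Pilbar(q_0,\vx[1],q'_{d_1}) \times \Pilbar(q'_{d_1},\vx[2],q_d)$, the evaluation is the concatenation of paths, so the pushforward equals $\pm \mu_{2}^{\P}$ of the two evaluations. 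I then split into three cases: when $d_1 = 0$ the first factor is a pure holomorphic strip $\Pilbar(q_0,q'_0)$ whose evaluation contributes a component of the twisting differential $D^{L_0}$ of $\cF(L_0)$, producing the $\mu_{2}^{\P}(\cF^{d}(\vx),D^{L_0})$ summand of $\mu_{1}^{\Tw(\Moore)}\cF^{d}(\vx)$; when $d_2 = 0$ the symmetric analysis yields the $\mu_{2}^{\P}(D^{L_d},\cF^{d}(\vx))$ summand; and when $d_1, d_2 > 0$ one obtains $\mu_{2}^{\Tw(\Moore)}(\cF^{d_2}(\vx[2]),\cF^{d_1}(\vx[1]))$. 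Combined with the term $\mu_{1}^{\P}\cF^{d}(\vx)$ arising directly from $\partial \ev_{*}([\Pilbar])$ (the first summand of $\mu_{1}^{\Tw(\Moore)}$ after unpacking \eqref{eq:differential_twisted}), these exactly recover the terms on the right-hand side of \eqref{eq:a_infty_functor_equation}.

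The main obstacle is the bookkeeping of signs. The orientation discrepancies $\flat$ and $\sharp$ of Lemma~\ref{lem:product_orientation_half_discs} convert the boundary orientation of $\partial[\Pilbar(q_0,\vx,q_d)]$ into the product orientations used to write down individual contributions, and these must be combined with the sign $\ddagger$ built into $\cF^{d}$, the sign $\dagger$ of \eqref{eq:dagger_sign} attached to $\mu^{\F}_{d}$, the Koszul signs of $\mu_{1}^{\P}$ and $\mu_{2}^{\P}$ from \eqref{eq:signs_a_infty_cubical}, the shift signs \eqref{eq:sign_additive_enlargement} coming from the additive enlargement, and the $\maltese_i$ appearing in Proposition~\ref{prop:a_infty_structure}. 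The verification proceeds by elementary parity algebra based on the dimension formula $\dim \Pilbar(q_0,\vx,q_d) = d - 1 + |q_0| - |q_d| - \sum |x_k|$ (together with $\dim \Discbar(y,\vx[2]) = 0$ in the rigid case); no further geometric input is required, but, as in the analogous checks carried out for $\mu^{\F}$ in Proposition~\ref{prop:a_infty_structure} and for the linear term in Lemma~\ref{lem:fundamental_chains_linear_map}, essentially the entire content of the proof resides in this sign match.
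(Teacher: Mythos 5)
Your argument is essentially the paper's: push forward the fundamental chain, identify $\partial\ev_*$ with $\ev_*\partial$, expand via Lemma~\ref{lem:good_choice_fundamental}, observe that non-rigid factors $\Discbar(y,\vx[2])$ contribute degenerate (hence vanishing) normalised cubical chains, and match the remaining strata against the terms of \eqref{eq:a_infty_functor_equation}; like the paper, you defer the sign bookkeeping to a routine Koszul computation. Your version is in fact slightly more explicit than the paper's sketch, which lumps the $d_1=0$, $d_2=0$ and $d_1,d_2>0$ side-breakings together, whereas you unpack \eqref{eq:differential_twisted} and cleanly separate the twisting-differential contributions $\mu_2^{\P}(\cF^d,D^{L_0})$ and $\mu_2^{\P}(D^{L_d},\cF^d)$ from the genuine $\mu_2^{\Tw(\Moore)}(\cF^{d_2},\cF^{d_1})$ terms.

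One slip: you fix $q_0,q_d,\vx$ so that $d-1+|q_0|-|q_d|-\sum|x_k|=1$, i.e.\ you restrict to one-dimensional $\Pilbar(q_0,\vx,q_d)$. This is the right reflex when one is counting rigid curves (as for Proposition~\ref{prop:a_infty_structure}), but here the operations $\cF^d$ are \emph{chain-valued}, so the functor equation must be verified in every chain degree, not just the degree where the output is zero-dimensional. Nothing in your mechanism actually depends on this restriction — the identity $\partial\ev_*[\Pilbar]=\ev_*(\partial[\Pilbar])$ and the expansion \eqref{eq:boundary_fundamental_chain} hold for all $(q_0,\vx,q_d)$ — so you should simply drop the dimension-one hypothesis; as stated, your proof only checks the lowest nontrivial degree.
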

\begin{proof}
The discussion of signs is omitted as it is tedious, and does not differ in any significant way from the computation performed in Section (9f) of \cite{abouzaid-seidel}. The correspondence between the terms of the $A_{\infty}$ equation \eqref{eq:a_infty_functor_equation} and expressions for the boundary of the fundamental chain \eqref{eq:boundary_fundamental_chain} is particularly simple:  (i) the left hand side of Equation  \eqref{eq:boundary_fundamental_chain} corresponds to the first term on the right hand side of Equation  \eqref{eq:a_infty_functor_equation} (ii) the first sum on the right hand side of Equation  \eqref{eq:boundary_fundamental_chain} corresponds  to the summation on the right hand side of  \eqref{eq:a_infty_functor_equation}, and (iii) the right hand side in Equation  \eqref{eq:a_infty_functor_equation} cancels with the remaining terms from the right hand side of Equation  \eqref{eq:boundary_fundamental_chain}.   Note that, a priori, the last  sum in Equation \eqref{eq:boundary_fundamental_chain}  has contributions from moduli spaces of discs which are not necessarily rigid; however, the image of such contribution under the evaluation map to $\Moore(Q)$ vanishes as all such chains become degenerate (i.e., factor through a lower-dimensional chain). 
\end{proof}

\section{Equivalence for cotangent fibres}  \label{sec:cotangent}
Let us now equip $Q$ with a Riemannian metric.  The cotangent bundle of $Q$ is an exact symplectic manifold with primitive $\lambda = \sum p_i dq_i$ in local coordinates, and the points of $p$-norm less than $1$ form  an exact symplectic manifold $D^* Q$ with contact type boundary.  In particular, we have a decomposition of the cotangent bundle as a union
\begin{equation} S^*Q \times [1,+\infty) \cup D^* Q \end{equation}
with $S^*Q \times 1$ the contact boundary of $D^* Q$.  Let us pick a function $H$ which agrees with $|p|^{2}$ on the infinite cone.  Choosing the metric   generically, we may ensure that such a Hamiltonian  has only non-degenerate chords with end points on $T^*_{q} Q$.

Abbondandolo and Schwarz have already proved in \cite{AS} that there is an isomorphism
\begin{equation} \label{eq:AS-map} \Theta \co  H_{*}(\Omega(q,q)) \to HW^{*}_{b}(T^*_{q} Q, T^*_{q} Q) \end{equation}
using a Morse model for the left hand side.   In this section, we shall sketch the proof of
\begin{lem} \label{eq:left_inverse}  The linear term 
\begin{equation} \cF^{1} \co CW^{*}_{b}(T^*_{q} Q, T^*_q Q ) \to C_{*}(\Omega(q,q)) \end{equation}
of the functor $\cF$ descends on cohomology to an inverse for $\Theta$.   In particular, $\cF$ is an equivalence.
\end{lem}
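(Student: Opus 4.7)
The plan is to show $\Theta \circ \cF^{1} \simeq \id$ on $CW^{*}_{b}(T^*_q Q, T^*_q Q)$, exhibiting $\cF^{1}$ as a right inverse to $\Theta$ at the level of cohomology. Because $\Theta$ is already an isomorphism by \cite{AS}, a one-sided inverse is automatically two-sided. To promote the resulting cohomology isomorphism to an $A_{\infty}$ equivalence, I would use that the inclusion $C_{-*}(\Omega(q,q)) \to \Moore(Q)$ of the endomorphism algebra of a single object is a quasi-equivalence (noted right after the definition of $\Moore(Q)$ using path-connectivity of $Q$), together with the formal fact that an $A_{\infty}$ functor inducing a quasi-isomorphism on the endomorphism algebra of a single generating object extends to an equivalence after passage to twisted complexes.

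The key geometric construction is a one-parameter moduli space $\mathcal{N}^{r}(x_{0}, x_{1})$ of Cauchy-Riemann solutions on a fixed topological surface with two chord ends $x_{0}, x_{1}$ (both with boundary conditions on $T^*_q Q$) and one outgoing boundary arc mapped to $Q$, parametrised by a gluing length $r \in [0, +\infty]$. At $r = +\infty$ the configurations degenerate into pairs consisting of a $\Pilbar(q, x_{1}, q)$ half-disc concatenated with an Abbondandolo-Schwarz half-cylinder; counting rigid elements at this end recovers $\Theta \circ \cF^{1}$. At $r = 0$ the outgoing $Q$-arc collapses to the point $q \in T^*_q Q \cap Q$, leaving a Floer continuation strip whose count gives the identity. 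Compactness follows from a Floer-Hofer estimate analogous to Lemma \ref{lem:gromov_compactness}, after verifying with a monotonicity estimate that the collapsing cap stays near $q$; regularity is standard. The signed boundary of the one-dimensional component of $\mathcal{N}^{r}$ then defines a chain homotopy $K$ with $\mu^{\F}_{1} K + K \mu^{\F}_{1} = \Theta \circ \cF^{1} - \id$, the additional boundary contributions being the usual strip breakings at the $x_{0}$ and $x_{1}$ ends.

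The hard part will be the analysis at $r = 0$. One must show that the collapsing cap really does shrink uniformly to a neighbourhood of $q$, so that the $r = 0$ stratum is a continuation moduli space between two Floer data that agree outside a small region and can therefore be identified with the identity. Ensuring this requires the Hamiltonian perturbation on the cap to vanish on $Q$ (as already allowed by our definition of $\sH_{Q}(M)$ in Section \ref{sec:review-wrapped}) and to be chosen small enough that exactness prevents any bubbling or escape from a small neighbourhood of $q$. The required $C^{0}$-control does not follow from the cylindrical-end argument of Lemma \ref{lem:gromov_compactness}; it must instead come from a monotonicity/energy estimate specific to the cotangent bundle geometry near the zero section and the finite energy budget of the boundary evaluation. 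Once this uniform bound is in place, the rest of the argument, including the identification of the chain homotopy with the difference $\Theta \circ \cF^{1} - \id$, is standard Floer-theoretic bookkeeping of signs.
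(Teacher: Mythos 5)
You have the right global strategy — a one-sided inverse suffices since $\Theta$ is already known to be an isomorphism, and the upgrade to an $A_{\infty}$ equivalence via the quasi-equivalence $C_{-*}(\Omega(q,q)) \hookrightarrow \Moore(Q)$ is exactly as the paper argues. However, your geometric construction of the homotopy is genuinely different from the paper's, and as stated it has two serious gaps.

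First, the $r=0$ stratum does not give the identity. When the outgoing $Q$-arc collapses, its two corners (which necessarily sit at $q = Q \cap T^*_q Q$) collide, and a constant ``ghost'' bubble at $q$ is pinched off. The remaining component is a strip from $x_1$ to $x_0$ with boundary conditions $T^*_q Q$ on both sides \emph{together with the incidence constraint that it passes through $q$ at a boundary point}. This is a codimension-$\dim Q$ condition on the moduli of strips, not an unconstrained continuation strip; generically there are no such rigid elements in the degree where the identity lives. Your own worry about ``$C^0$-control near $q$'' does not cure this: even with perfect $C^0$-estimates, the $r=0$ limit is a constrained moduli space, not the identity.

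Second, the $r=+\infty$ degeneration does not produce the composition $\Theta \circ \cF^1$ as a pseudo-holomorphic breaking. The Abbondandolo–Schwarz half-cylinder defining $\Theta$ has a \emph{free} boundary segment at $\{0\}\times[0,1]$ constrained only through its projection to $Q$ and its intersection with a descending manifold of the Morse function $\LAction$; it is not a Lagrangian boundary condition. You cannot glue it to the $Q$-boundary arc of a $\Pilbar(q,x_1,q)$ half-disc as a nodal degeneration of a family of pseudo-holomorphic maps on a fixed topological surface. What the $4$-punctured disc picture you describe actually degenerates to at that end is a pair of half-discs $\Pil(q,x_1,q)$ and $\Pil(q,x_0,q)$ joined at a node mapping to $q$ — a ``pair-of-pants on the Pontryagin side,'' not the AS map. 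Passing from the cubical chain $\cF^1(x_1)$ to the input of $\Theta$ inherently involves a Morse-theoretic intersection (or an equivalent chain-level pairing), and this cannot be conjured away.

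The paper resolves both difficulties by working on the Morse side: it proves $\cF^1 \circ \Theta \simeq \id$ on $CM_*(\Omega^{1,2}(q,q),\LAction)$ via a parametrised moduli space $\Cobord(y,z;a)$ of maps on the \emph{finite} strip $[0,a]\times[0,1]$ ($a \in [0,\infty)$), with $T^*_q Q$ on the long sides, the zero-section-plus-ascending-manifold condition at $\{0\}\times[0,1]$, and the AS projection-to-descending-manifold condition at $\{a\}\times[0,1]$. At $a=0$ the domain degenerates and the solution is a loop in $Q$ lying on an ascending and a descending manifold simultaneously, giving the identity on $CM_*$; at $a\to\infty$ it breaks into the AS half-cylinder followed by a $\cF^1$ half-disc, and the remaining codimension-one strata come from Morse flow lines, producing exactly the Morse differential terms. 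Both ends are unambiguous precisely because the Morse chain model is used. Your version would need a comparable device — most plausibly, you must also factor through $CM_*$ or through weakly transverse cubical chains as in the paper, and in that case the moduli space $\mathcal{N}^r$ collapses to something equivalent to $\Cobord(y,z;a)$.
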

\begin{rem}
Since the conventions of Abbondandolo and Schwarz  for orientations on the Floer side are the same,  up to chain equivalence, to those we use in Appendix \ref{sec:orientations} for a trivial background class,  the result of \cite{AS} suffers from the sign issue alluded to in Remark \ref{rem:sign_error}.   The map constructed in  \cite{AS} becomes a chain map if we consider the Morse homology of the based loop space with values in this local system.  Alternatively, one can change the sign conventions on the Floer side by twisting its generators by $ \kappa_{TQ}  $, and obtain a chain map from a twisted version of the wrapped Floer cohomology to the ordinary homology of the based loop space.  Lemma \ref{lem:twisted_to_untwisted_sign_difference} implies that the resulting signs agree with those coming from considering the cotangent fibre as an object of the wrapped Fukaya category with the background class $b$ which is the pullback of $w_{2}(Q)$.  This allows us to use the results of \cite{AS} whenever $Q$ is orientable.

If $Q$ is not orientable then the pullback of $TQ$ under a based loop may not be trivial.  However, it admits two $\Pin$ structure and $\kappa_{TQ}$ is defined in this case to be the vector space generated by the two $\Pin$ structures with the relation that their sum vanishes.  In this way, the results of \cite{AS} concerning the based loop space extend to the non-orientable case as noted in Section 5.9 of \cite{AS} (though the relation between symplectic cohomology and the homology of the free loop space requires twisting the latter by the orientation line of the base).
\end{rem}
Since $\Theta$ is known to be an isomorphism, it suffices to prove that $\cF$ is a one-sided inverse; we shall prove that it is a right inverse.  In order to pass to Morse chains, we follow \cite{AS}, and fix a Morse function $\LAction$ on 
\begin{equation} \Omega^{1,2}(q,q) \equiv W^{1,2}(([0,1],0,1) , (Q,q,q)) \end{equation}
which is the Lagrangian action functional for the Legendre transform of $H$.  The critical points of $\LAction$ are non-degenerate by virtue of the analogous result for the critical points of $\Action$, so we obtain a Morse-Smale flow on $\Omega^{1,2}(q,q)$ upon choosing a sufficiently generic metric.

Let us write
\begin{equation} \label{eq:inclusion_weakly_transverse} C^{\tr}_{n}\left(\Omega^{1,2}(q,q) \right) \subset C_{n}\left(\Omega^{1,2}(q,q) \right) \end{equation}
for the  subgroup spanned by continuous chains of dimension $n$ which satisfy the following property:
\begin{equation} \parbox{36em}{the restriction to a boundary stratum of dimension $k$ is smooth near the inverse images of ascending manifolds of codimension $k$, and intersects the ascending manifolds of codimension larger than $k$ transversely}
\end{equation}
We call such chains \emph{weakly transverse}. Note that weak transversality implies that a chain of dimension $k$ does not intersect the ascending manifolds of critical points of index greater than $k$.
\begin{lem}
The inclusion of weakly transverse chains into all chains induces an isomorphism on cohomology.  Moreover, for appropriate choices of fundamental chains on the moduli spaces of half-popsicle maps, the map
\begin{equation} \label{eq:map_floer_to_transverse}  CF^{*}_{b}(T^*_{q} Q;H) \to  C_{*}\left(\Omega(q,q)) \right)  \end{equation}
factors through weakly transverse chains.
\end{lem}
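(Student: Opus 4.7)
The plan is to treat the two claims in turn: first to establish that the inclusion $C^{\tr}_* \hookrightarrow C_*$ is a quasi-isomorphism by a general-position argument, and then to choose the fundamental chains on the half-popsicle moduli spaces inductively so that their evaluations are weakly transverse.

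For the quasi-isomorphism, the approach is to exhibit an explicit chain-level deformation retraction onto $C^{\tr}_*$. Given a cubical singular chain $\sigma \co [0,1]^n \to \Omega^{1,2}(q,q)$ with weakly transverse boundary, I would produce a weakly transverse representative in the same chain-homotopy class by cubical subdivision followed by a $C^0$-small relative perturbation. The key local input is that, although $\Omega^{1,2}(q,q)$ is infinite-dimensional, the image of any given cube meets only finitely many ascending manifolds of codimension at most $n$, because $\LAction$ is proper on compact subsets of the image and has only finitely many critical points below any given action level. Standard transversality then allows a generic perturbation, relative to the already weakly transverse boundary, to achieve smoothness near, and transversality to, all ascending manifolds of codimension at most $n$. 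Iterating over the faces of each cube in order of increasing dimension and over the cubes of $\sigma$ produces the required chain-homotopy operator.

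For the factoring, I would choose the fundamental chains $[\Pilbar(q,x,q)]$ by induction on the dimension $1 - |x|$ of the moduli space. In dimension zero, the evaluation is trivially weakly transverse. For the inductive step, the prescribed boundary, determined by the lower-dimensional fundamental chains together with the chains $[\Pilbar(q^i,q^j)]$ from Section~\ref{sec:at-level-objects} as in Equation~\eqref{eq:sum_boundary_classes}, evaluates under $\ev$ to a weakly transverse cycle by induction. I would pick any cubical filling $c \in C_*(\Pilbar(q,x,q))$ realising this boundary, and then apply the general-position procedure of the first part directly upstairs: cubical subdivision of $c$ in $\Pilbar(q,x,q)$ followed by $C^0$-small generic perturbations of the interior subcubes, arranged so that their images under $\ev$ become weakly transverse in $\Omega(q,q)$. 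The perturbations can be localised away from $\partial c$, where the evaluation is already weakly transverse.

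The main obstacle is arranging the perturbation upstairs in $\Pilbar(q,x,q)$ so that its composition with $\ev$ realises the required general-position correction downstairs, while strictly fixing the prescribed boundary stratum. This is possible because $\ev$ is smooth in the interior of the moduli space, so a generic $C^0$-small deformation of an interior cube of $c$ pushes forward to a corresponding generic deformation in $\Omega(q,q)$; the relative form of the first-part argument then guarantees that the images can be made weakly transverse. Compatibility with the orientations tracked by Lemma~\ref{lem:orientaiton_error_moduli_half_disc_1_puncture} and with the sign conventions introduced around Equation~\eqref{eq:sum_boundary_classes} is automatic, since those depend only on the boundary data, which is preserved throughout the construction.
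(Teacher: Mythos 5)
Your argument for the second part has a genuine gap. You propose to pick an arbitrary cubical filling $c$ in $C_*(\Pilbar(q,x,q))$ and then render $\ev_*(c)$ weakly transverse by a generic $C^0$-small perturbation of the interior cubes of $c$ inside the moduli space. But the cubes remain constrained to map into $\Pilbar(q,x,q)$, so the tangent directions available for perturbation lie in $T\Pilbar(q,x,q)$, and their images under $d\ev$ lie in the image of $d\ev$. In order for transversality of $\ev \circ (\text{cube})$ to the ascending manifolds to be achievable by such a constrained perturbation, one needs the image of $d\ev$ to complement the tangent spaces of the relevant ascending manifolds, i.e.\ one needs $\ev \co \Pil(q,x,q) \to \Omega^{1,2}(q,q)$ itself to be transverse to those ascending manifolds. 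The assertion ``$\ev$ is smooth, so a generic deformation upstairs pushes forward to a generic deformation downstairs'' is false in general: smoothness gives no control on the rank of $d\ev$ relative to the ascending manifolds. The missing input is exactly the transversality result invoked by the paper (Theorem~3.8 of \cite{AS}), which states that for a generic almost complex structure the evaluation map is transverse to the ascending manifolds of codimension at least $|x|$. Once this is granted, the inverse images of those ascending manifolds are finite sets of points in $\Pil(q,x,q)$ by Gromov compactness, and one can choose fundamental chains whose interior cubes avoid or meet them transversely, which is what the paper does; without it, the inductive choice of fundamental chains is not justified.

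For the first part, your chain-homotopy-by-subdivision-and-relative-perturbation argument is a workable alternative, though it is more labour-intensive than the paper's route. The paper instead observes that the inclusion of the still smaller subcomplex of smooth chains transverse to \emph{all} ascending manifolds is already a quasi-isomorphism, so the intermediate inclusion $C^{\tr}_* \hookrightarrow C_*$ is automatically surjective on cohomology; injectivity then only requires showing that a weakly transverse cycle which bounds at all bounds a weakly transverse chain, which is done by a $C^1$-small perturbation within the weakly transverse class. Your direct approach buys nothing here, and it silently relies on the same local finiteness and genericity facts (Palais--Smale for $\LAction$, openness and density of transversality) that the paper's argument uses more economically.
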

\begin{proof}
The first part follows by considering a smaller subcomplex consisting of all smooth chains which are transverse to all ascending manifolds of $\LAction$.  As the inclusion of this subcomplex into $C_{*}\left(W^{1,2}(([0,1],0,1) , (Q,q,q)) \right)$ induces an isomorphism on cohomology, we conclude that the inclusion \eqref{eq:inclusion_weakly_transverse} must induce a surjection on cohomology groups.  To obtain an injection, we must show that a weakly transverse chain is homologous to a transverse one through weakly transverse chains.  This follows from the fact that weakly transversality is stable under perturbations which are $C^1$ small near the intersections with ascending manifolds of codimension $k$, since the result of a generic such perturbation is a homotopic transverse chain.

For the second part, we first observe that Theorem 3.8 of \cite{AS} implies that for a generic choice of almost complex structures the evaluation map
\begin{equation*} \Pil(q,x,q) \to \Omega^{1,2}(q,q)  \end{equation*}  
is transverse to ascending manifolds of $\LAction$ of codimension $|x|$ or higher for every $X$-chord $x$.  The inverse image of such ascending manifolds consists of finitely many points in $\Pil(q,x,q)$ by Gromov compactness.  By induction, we may therefore choose fundamental chains for $\Pilbar(q,x,q)$ satisfying the desired property.
\end{proof}

The main reason for introducing $C^{\tr}_{n}\left(\Omega^{1,2}(q,q) \right)$ is that the count of negative gradient flow lines starting on a given cell and ending on a critical point of $\LAction$ defines a chain equivalence
\begin{equation*} C^{\tr}_{n}\left(\Omega^{1,2}(q,q) \right) \to CM_*(\Omega^{1,2}(q,q), \LAction). \end{equation*}  

Composing with \eqref{eq:map_floer_to_transverse} we obtain a map
\begin{equation} \label{eq:inverse_to_AS}  CW^{*}_{b}(T^*_{q} Q, T^*_{q} Q) \to  CM_*(\Omega^{1,2}(q,q), \LAction) ,\end{equation}
where the right hand side is the Morse complex of the function $\LAction$.  

We end this section with a sketch of the proof of Lemma \ref{eq:left_inverse}.  In particular, we omit discussing signs.
\begin{proof}[Proof of Lemma \ref{eq:left_inverse}]
Abbondandolo and Schwarz construct a chain homomorphism
\begin{align*} \Theta \co CM_*(\Omega^{1,2}(q,q), \LAction) & \to CW^{*}_{b}(T^*_{q} Q;T^*_{q} Q)  \\
y & \mapsto \sum_{|x| = |y|} n^{+}(y;x) x
\end{align*}
where $n^{+}(y;x)$ is a count of maps
\begin{equation*} (-\infty, 0] \times   [0,1] \to T^* Q \end{equation*}
solving Equation \eqref{eq:dbar_strip} which converge at infinity to $x$ with boundary condition $T^*_{q}Q$ along the semi-infinite segment $ (-\infty, 0] \times  \{ 0 \}$  and $ (-\infty, 0] \times  \{ 1 \}$, and such that
\begin{equation*}  \parbox{36em}{the image of $u( \{ 0 \}\times  [0,1]  )$ under the projection from $T^*Q$ to $Q$ is a path on $Q$ lying in the descending manifold of $y$.}  \end{equation*}

We shall prove that the composition
\begin{equation} \label{eq:composition_functor_AS} CM_*(\Omega^{1,2}(q,q), \LAction)  \stackrel{\Theta}{\rightarrow}  CW^{*}_{b}(T^*_{q} Q,T^*_{q} Q) \stackrel{\cF^1}{ \rightarrow}  CM_*(\Omega^{1,2}(q,q), \LAction)  \end{equation}
is homotopic to the identity.  To this end, for each positive real number $a$ and critical points $y$ and $z$ of $\LAction$, we consider a moduli space
\begin{equation*} \Cobord(y,z;a) \end{equation*}
consisting of maps 
\begin{equation*} [0,a]  \times [0,1] \to T^* Q \end{equation*}
which solve the Cauchy-Riemann problem of Equation \eqref{eq:dbar_strip}, have  boundary condition $T^*_{q}Q$ along the segments  $ [0, a] \times  \{ 0 \}$  and $ [0, a] \times  \{ 1 \}$, and such that
\begin{equation*}  \parbox{36em}{$u( \{ 0 \} \times [0,1] )$ is contained in the zero section and, considered as a path on $Q$, lies on the ascending manifold of $z$, while the image of $u( \{ a \} \times [0,1] )$ under the projection from $T^*Q$ to $Q$ is a path on $Q$ lying in the descending manifold of $y$.}  \end{equation*}

For $a=0$, any such solution is necessarily constant, so that the count of rigid elements of $\Cobord(y,z;0)$ gives the identity map of $ CM_*(\Omega^{1,2}(q,q), \LAction) $.  Let $\Cobordbar(y;z)$ denote the Gromov compactification of the union of all moduli spaces $\Cobord(y,z;a)$ for $a \in [0,+\infty)$.  Letting $a$ go to $+\infty$, a family of maps $u_a$ defined on finite strips $ [0,a] \times [0,1] $  breaks into two maps $u_-$ and $u_+$ defined on semi-infinite strips.  An inspection of the boundary conditions shows that the count of rigid elements of such a boundary stratum reproduces the composition of $\cF^1$ with $\Theta$ as in Equation \eqref{eq:composition_functor_AS}.

The remaining boundary strata occur at finite $a$ when the projection of $u(\{ a \} \times [0,1] )$ to $Q$ escapes to the ascending manifold of a critical point $y'$ which differs from $y$. Similarly, the image of $u( \{ 0 \} \times [0,1] )$ may converge to the descending manifold of a critical point $z'$.  Such boundary strata are in bijective correspondence with
\begin{equation*} \Morse(y,y')  \times \Cobordbar(y';z)  \cup \Cobordbar(y;z') \times \Morse(z',z) \end{equation*}  
where $\Morse(y,y')$ is the moduli space of gradient trajectories of $\LAction$; the count of rigid element of these moduli spaces defines the differential on $ CM_*(\Omega^{1,2}(q,q), \LAction)  $.  In particular, we conclude that counting rigid elements of $\Cobordbar(y,z)$ is a homotopy between the identity and the composition of $\cF^1$ with $\Theta$ completing the proof of Lemma \ref{eq:left_inverse}.
\end{proof}

\appendix
\section{Technical results}
\subsection{Relative $\Pin$ structures and orientations} \label{sec:orientations}
Consider a pair of graded Lagrangians $L$ and $K$ in a symplectic manifold $M$, and a time-$1$ non-degenerate Hamiltonian chord $y$ starting at $L$ and ending on $K$.  Let $T$ be a genus $0$ Riemann surface obtained by removing a point from the boundary of $D^2$, and equipped with a negative end near this puncture.  We choose a projection map 
\begin{equation} \label{eq:project_disc_strip}
  t \co  T \to [0,1]
\end{equation}
which extends the projection to the interval along the strip-like end, and a $1$-form $\beta$ on $T$ which vanishes near the boundary and agrees with the pullback of $d \tau$ along the strip-like end.   Using the almost complex structure $I_{t}$ on the tangent space of $M$ allows us to define a Cauchy-Riemann operator
\begin{equation} \label{eq:determinant_operator_chord}
  \left( du - X \otimes \beta \right)^{0,1} =0
\end{equation}
on sections of $T \times M$.  The linearisation of this operator has source sections of $(y \circ t)^{*}(TM)$, and the data of gradings on $L$ and $K$ define a path of Lagrangians $\Lambda_y$ in the restriction of this symplectic vector bundle to the boundary, uniquely determined up to homotopy by the requirement that it lift to a path of graded Lagrangians interpolating between the graded structures on $T_{y(0)}L$ and $T_{y(1)}K$ (see Section (11) of \cite{seidel-book}) on either side of the puncture.    
\begin{defin}[Lemma 11.11 of \cite{seidel-book}]
The vector space $\ro_{y}$ is the determinant bundle of the linearisation of the operator \eqref{eq:determinant_operator_chord} with  boundary conditions $\Lambda_{y}$.
\end{defin}
\begin{rem}
If we were working in $T^* Q$, and we replace $L$ by the zero section and $K$ by the graph of the differential of a Morse function, then $\ro_y$ can be proved to be canonically isomorphic to the top exterior power of the descending manifold of the critical point associated to this Lagrangian intersection.
\end{rem}

The Floer chain complexes we consider are direct sums of the orientation lines of $\ro_y$; i.e. the free abelian group generated by the two possible orientations of $\ro_y$ with the relation that their sum vanishes.    In the case of exact Lagrangian sections of cotangent bundles, such an orientation is equivalent to an orientation of the descending manifolds of the corresponding critical point, and it is well known that explicit signs in Morse theory require choices of orientations on such descending manifolds.  By working with orientation lines, we avoid making these choices, at the cost of being less explicit about the resulting pluses and minuses.  For a thorough comparison between signs in Morse and Floer theory, see \cite{abouzaid-plumbing}.

 Imposing an additional condition on the choices made in Section \ref{sec:at-level-objects}, we require that our chosen triangulation satisfy the following condition:
\begin{equation}
  \label{eq:chords_in_skeleton}
  \parbox{36em}{Every Hamiltonian time-$1$ chord between elements of $\Ob(\Wrap_{b}(M))$ and every intersection point between such a Lagrangian and $Q$ is contained in the $3$-skeleton of $M$.}
\end{equation}
Note that while there are, a priori, infinitely many such chords, only finitely many of them intersect any given compact subset, which implies the existence of such a triangulation of $M$.  Given a chord $y$ with endpoints on $K$ and $L$ (and letting $E_{b}$ denote the vector bundle fixed in Section \ref{sec:at-level-objects}), we define a \emph{relative $\Pin$ structure} on $\Lambda_{y}$ to be a $\Pin$ structure on $y^{*}(E_{b}) \oplus \Lambda_{y} $ whose restriction to an end of the chord agrees with the relative $\Pin$ structure on either $K$ or $L$.  By thinking of an intersection point between $L$ and $Q$ as a time-$1$ chord for the zero Hamiltonian, we also associate a path $\Lambda_{q}$ to such a point, on which a  relative $\Pin$ structure may be chosen.

We now consider the following situation: $L_{0}, \ldots, L_{d}$ is a collection of objects of $ \Wrap_{b}(M) $ and  $\vx = ( x_1,  \ldots, x_d  )$ a sequence of chords with $x_k$ starting at $L_{k-1}$ and ending at $L_{k}$.  In addition, we have either a chord $x_0 \in \Chord(L_0,L_d)$ or intersection points $q_0$ and $ q_d$  of $L_0$ and $L_d$ with $Q$.  We shall only need to know the following Lemma which is a minor generalisations of the standard result required to define operations in Lagrangian Floer theory away from characteristic $2$.   In our notation below, $\lambda$ stands for the top exterior power of the tangent space of the appropriate manifold.
\begin{lem} \label{lem:orienting_mod_spaces}
Choices of relative $\Pin$ structures on the Lagrangians $L_{k}$ and on the paths $\Lambda_{x_k}$ for each element of $\vx$ and for $x_0$ induce a canonical up to homotopy isomorphism
\begin{equation}
\label{eq:orientation_higher_product}\lambda( \Disc_{d}(x_0;\vx) )  \otimes   \ro_{x_d}  \otimes \cdots \otimes \ro_{x_1}  \cong  \lambda(\Disc_{d}) \otimes \ro_{x_0}
\end{equation}
whenever the moduli space is regular.  The additional choices of  relative $\Pin$ structures on  $\Lambda_{q_0}$ and  $\Lambda_{q_d}$ induce an isomorphism
\begin{equation}
   \label{eq:orientation_higher_maps} \lambda( \Pil_{d}(q_0 , \vx, q_d) )   \otimes  \ro_{q_d} \otimes   \ro_{x_d}  \otimes \cdots \otimes \ro_{x_1}\cong  \lambda(\Pil_{d}) \otimes \ro_{q_0}.
\end{equation}
\end{lem}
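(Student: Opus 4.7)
The plan is to prove this by the pasting-of-determinants strategy of Section (11) of \cite{seidel-book}, suitably adapted by the FOOO-style formalism of \cite{FOOO} so as to accommodate the twist by the auxiliary bundle $E_{b}$ that appears in our definition of relative $\Pin$ structures in Section \ref{sec:at-level-objects}.  First I would observe that at a regular $u \in \Disc_{d}(x_0; \vx)$ the tangent space to the moduli fits into a short exact sequence with $T\Disc_{d}$ and the kernel of the linearization $D_u$ with fixed domain, giving a canonical isomorphism
\begin{equation*}
\lambda(\Disc_{d}(x_0; \vx)) \cong \lambda(\Disc_{d}) \otimes \det(D_u).
\end{equation*}
This reduces \eqref{eq:orientation_higher_product} to producing a canonical isomorphism between $\det(D_u)$ and an appropriate product of the lines $\ro_{x_k}$ and $\ro_{x_0}$.

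The main tool is the capping construction.  For each chord $x_k$, $k = 0, \ldots, d$, I would fix once and for all an auxiliary surface $T_{x_k}$ of the type used to define $\ro_{x_k}$, carrying a CR operator whose pullback under the appropriate strip-like end agrees with the operator used to linearize \eqref{eq:dbar_disc_1_output} at $u$; its determinant line is by construction $\ro_{x_k}$.  Glueing these operators to $D_u$ at each input puncture, together with a reversed capping at the output puncture $x_0$, yields a CR operator $\hat D_u$ whose domain is a closed disc without punctures and whose Lagrangian boundary condition is a loop of Lagrangian subspaces of $u^{*}TM|_{\partial D^2}$.  The standard pasting lemma for determinant lines then supplies a canonical isomorphism
\begin{equation*}
\det(D_u) \otimes \ro_{x_d} \otimes \cdots \otimes \ro_{x_1} \cong \det(\hat D_u) \otimes \ro_{x_0}.
\end{equation*}

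To orient $\det(\hat D_u)$, note that its boundary condition decomposes (up to homotopy of graded Lagrangian loops) into arcs of two types: segments lying on the $L_k$, and the interpolating paths $\Lambda_{x_k}$.  Upon tensoring with $u^{*}E_{b}$ along $\partial D^2$, the relative $\Pin$ structures on the $L_k$ and on each $\Lambda_{x_k}$ assemble into a genuine $\Pin$ structure on this loop of vector bundles, and by the discussion of Section (11i) of \cite{seidel-book} such a $\Pin$ structure canonically orients the determinant of the associated CR operator on the disc.  Combined with the previous steps, this establishes \eqref{eq:orientation_higher_product}.  The half-disc isomorphism \eqref{eq:orientation_higher_maps} is entirely parallel: one treats $q_0$ and $q_d$ as time-$0$ chords, with $\ro_{q_0}$ and $\ro_{q_d}$ entering as additional lines, and the outgoing $Q$-segment of $\partial T$ contributing a final arc of the boundary loop of $\hat D_u$ that is $\Pin$-trivialized using the relative $\Pin$ structure on $Q$.

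The main obstacle is homotopy invariance of the construction: one must verify that the orientation produced is independent of the choice of capping operators, of the trivializations of $u^{*}E_{b}$ over the disc, and of the extensions of $\Pin$ data across the contractible portions of the boundary loop.  In the ungraded, non-twisted case this is the content of Chapter 9 of \cite{FOOO} and Section (11i) of \cite{seidel-book}; here one must repeat these arguments in the relative $\Pin$ setting, where the compatibility conditions reduce to standard statements about connectedness of spaces of $\Pin$ structures on vector bundles over intervals.  None of these verifications are conceptually difficult, but they constitute the bulk of the work and require careful bookkeeping to confirm that the resulting orientation of $\det(\hat D_u)$ depends only on the relative $\Pin$ structures and gradings fixed at the outset.
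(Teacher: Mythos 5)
Your reduction to orienting $\det(D_u)$ via the short exact sequence, and then capping the punctures, is the right circle of ideas, but you take a genuinely different route from the paper at the crucial step.  You cap \emph{all} the punctures, including the output $x_0$ by a reversed cap, to produce $\hat D_u$ on a closed disc with \emph{no} punctures, and then orient $\det(\hat D_u)$ directly from the $\Pin$ structure on its Lagrangian boundary loop.  The paper instead glues caps only at the positive punctures, producing an operator $D_{u_1}\# D_{q_d}\# D_{x_d}\#\cdots\# D_{x_1}$ on a \emph{once}-punctured disc, and then observes that the graded structure forces this operator's boundary conditions to be isotopic (rel the end) to those of $D_{x_0}$; a choice of such isotopy gives the isomorphism $\det(\text{glued})\cong\ro_{x_0}$, and the whole function of the relative $\Pin$ structures is to select one of the two homotopy classes of this isotopy, since the relevant space of graded Lagrangian paths has $\pi_1\cong\bZ/2$ in dimension $\ge 2$.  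Both approaches are standard (yours is closer to FOOO's Chapter 8 normalization, the paper's closer to Seidel's Lemma 11.17) and they agree, but they locate the role of $\Pin$ structures in different places: as an orientation of a closed-boundary operator in yours, versus as a homotopy class of isotopy in the paper's.

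There is one genuine gap you should repair: the expression ``$u^{*}E_{b}$ along $\partial D^2$'' is not defined as written, because $E_{b}$ is a bundle on the $3$-skeleton of $M$ only, and a regular $u\in\Disc_{d}(x_0;\vx)$ need not map $\partial S$ into that skeleton.  You must first deform $u=u_0$ through curves with fixed boundary and asymptotic conditions to a curve $u_1$ whose boundary lands in the $3$-skeleton (as the paper does, invoking the analogue of the FOOO argument to show that the homotopy class of such a deformation is unique), and only then pull back $E_{b}$.  Your final paragraph gestures at ``trivializations of $u^{*}E_{b}$ over the disc'', but this confuses two things: one does not need $E_{b}$ over the interior of the disc, only the fact that a vector bundle over a once-punctured disc or closed disc is canonically trivializable up to homotopy, after one has made sense of its restriction to the boundary via $u_1$.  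Once this deformation step is inserted, the rest of your verification checklist (independence of capping operators, extension of $\Pin$ data across contractible arcs) is the correct list of things to check.
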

\begin{proof}[Sketch of Proof:]
   Equation (12.8) of \cite{seidel-book} is the special case where the background class $b$ vanishes and Proposition 4.2.2 of \cite{WW} (see, in particular  Equation (38)) covers the relatively $\Spin$ case. We shall therefore limit ourselves to a discussion of the relevance of $\Pin$ structures to sign considerations; we shall see that they are introduced in order to resolve an ambiguity coming from the existence of homotopically distinct isotopies between a fixed pair of Fredholm operators. 

Given a curve  $u \in \Pil_{d}(q_0 , \vx, q_d) $ with source $S$ we have a canonical up to homotopy isomorphism
\begin{equation} \label{eq:super_regular_iso}
  \lambda(\Pil_{d}(q_0 , \vx, q_d))  \cong \lambda( \Pil_{d}) \otimes \det(D_{u}),
\end{equation}
where $\det(D_{u})$ is the determinant line of $D_{u}$, defined as
\begin{equation}
 \det(D_{u})= \lambda^{\vee}( \coker(D_{u}))  \otimes \lambda( \ker(D_{u})) .
\end{equation}

The next step is to deform $u=u_{0}$ through a family of smooth curves $\{ u_{r}\}_{r \in [0,1]}$ with the same asymptotic and boundary conditions to a curve $u_{1}$ mapping $\partial S$ to the $3$-skeleton of $M$.  Because any homotopy of homotopies can be deformed so that the boundary still maps to the $3$-skeleton of $M$, the construction we give below does not depend on the choice of $\{u_{r}\}$ (see the analogous argument in Chapter 8 of \cite{FOOO}).  From the homotopy, we obtain an isomorphism
\begin{equation} \label{eq:homotopy_to_skeleton}
   \det(D_{u}) \cong    \det(D_{u_1}) .
\end{equation}

The asymptotic conditions satisfied by $D_{u_1}$ near the positive punctures imply that we may glue to it the operators $D_{q_d}, D_{x_d}, \ldots, D_{x_1}$ and obtain an operator on a disc with one negative puncture corresponding to $q_0$ which we denote:
\begin{equation}
  D_{u_1} \#  D_{q_d} \# D_{x_d} \# \cdots \# D_{x_1}.
\end{equation}
Two salient fact about this operator will be relevant to us:  First, gluing kernel and cokernel elements gives a natural identification
\begin{equation} \label{eq:gluing_isomorphism}
 \det\left(  D_{u_1} \#  D_{q_d} \# D_{x_d} \# \cdots \# D_{x_1} \right)  \cong  \det(D_{u_1}) \otimes  \ro_{q_d} \otimes   \ro_{x_d}  \otimes \cdots \otimes \ro_{x_1},
\end{equation}
as discussed, e.g. in Section (11c) of \cite{seidel-book}.  Next, the fact that all Lagrangians are \emph{graded} implies that the  boundary condition of this glued operator are isotopic to those of the operator $D_{x_0}$ through an isotopy which is constant near the end.  A \emph{choice} of isotopy defines an isotopy of Fredholm operators on the disc, and hence using Equation \eqref{eq:gluing_isomorphism}, an isomorphism
\begin{equation} \label{eq:iso_gluing_dbar_operator}
 \det(D_{u_1}) \otimes  \ro_{q_d} \otimes   \ro_{x_d}  \otimes \cdots \otimes \ro_{x_1}   \cong \ro_{x_0},
\end{equation}
which, together with Equations \eqref{eq:super_regular_iso} and \eqref{eq:homotopy_to_skeleton}, gives an isomorphism between the right spaces.

Note that the choice of isotopy is parametrised, up to homotopy, by the fundamental group of the space of Lagrangian paths between two fixed points, which is a group of order $2$ in dimensions greater than $2$; the two different choices give different signs in the above isomorphism and we must be able to make a coherent choice in order to obtain consistent operations away from characteristic $2$ (see, e.g. Lemma 11.17 of \cite{seidel-book}).  The choice of relative $\Pin$ structures enters only in this choice of homotopy.

Indeed, Definition \ref{def:brane} required that we choose a $\Pin$ structure on the vector bundle $TL|L_{[3]} \oplus E_{b}|L_{[3]}$ over the $3$-skeleton of every object $L$ of $\Wrap(M)$.  This choice determines a $\Pin$ structure  on (1) the direct sum of the boundary condition of the operator $D_{x_0}$ with the trivial vector bundle with fibre  $E_{b} | q_0$ and (2) the direct sum of the boundary condition for $ D_{u_1} \#  D_{q_d} \# D_{x_d} \# \cdots \# D_{x_1}$ with the vector bundle
\begin{equation} \label{eq:vector_bundle_boundary}
 u_{1}^{*}( E_{b}) |\partial S \#  E_{b}|q_d \# (x_{d} \circ t)^{*}E_{b}| \partial T \# \cdots \# (x_{1} \circ t)^{*}E_{b}| \partial T
\end{equation}
over the boundary of the once-punctured disc obtained by gluing the pull-back of $E_{b}$ to $\partial S$ with the pullbacks of $E_{b}$ to the one punctured disc $T$ under the compositions of the chords  $q_d, x_d, \ldots, x_1$ with the projection $t \co T \to [0,1]$  fixed in Equation \eqref{eq:project_disc_strip}.   Note that the pullback of $E_{b}$ by $u_1$, and by $ x_{k} \circ t $, extends the  vector bundle \eqref{eq:vector_bundle_boundary} to the entire Riemann surface, hence induces a canonical up to homotopy isomorphism between this glued vector bundle and the trivial vector bundle with fibre $ E_{b}| q_{0} $.

With this in mind, we now fix the path between the glued operator and $D_{x_0}$ to be the unique one, up to homotopy, such that direct sum of the boundary conditions with the trivial bundle $ E_{b}|q_0 $ admits a $\Pin$ structure which restricts, at both ends, to the one fixed above.
\end{proof}
\begin{rem} \label{rem:trivialise_R_action}
Whenever $d=1$ in Equation \eqref{eq:orientation_higher_product}  (or $0$ in Equation \eqref{eq:orientation_higher_maps}), it is useful to think of  $ \Disc_1$ or $\Pil_0$  as a manifold of dimension $-1$ whose tangent space is ``generated'' by translation on the strip.
\end{rem}

Let us now consider the special case where $M$ is the cotangent bundle of $Q$, and the Lagrangians $L_k$ agree with a cotangent fibre $\Tq$.  We choose a vector bundle $E_{b}$ on a $3$-skeleton for $\TQ$ with Stiefel-Whitney class $b \in H^{*}(\TQ, \bZ_{2})$, and fix a submanifold $N_{b} \subset \TQ $ Poincar\'e dual to $b$, and which is disjoint from $T^{*}_{q} Q$ as well as from all time-$1$ Hamiltonian chords of $H$ with endpoints on this cotangent fibre.

Since it is contractible, the cotangent fibre $\Tq$ is an object of the wrapped Fukaya category of $\TQ$ for all possible twistings, and we would like to compare the isomorphisms (\ref{eq:orientation_higher_product}) for an arbitrary class $b$, with the one coming from the trivial class.  In order to do this, we choose a $\Pin$ structure on the Lagrangian path $\Lambda_{x}$ associated to any chord $ x \in \Chord(\Tq, \Tq)$.  Then, noting that the restriction of $E_{b}$ to the complement of $N_b$ in $\TQ_{[3]}  $ admits a $\Pin$ structure, we choose such a structure, and equip
\begin{equation}
  \Lambda_{x_k} \oplus x^{*}(E_{b})
\end{equation}
with the associated direct sum of $\Pin$ structures.

\begin{lem} \label{lem:twisted_to_untwisted_sign_difference}
Given a curve $u \in \Disc_{d}(x_0;\vx)   $,  the two isomorphisms 
\begin{equation} \label{eq:gluing_iso_dbar_disc}
 \det(D_{u}) \otimes  \ro_{x_d}  \otimes \cdots \otimes \ro_{x_1}   \cong \ro_{x_0},
\end{equation}
obtained from the sign conventions associated to $\Tq$ either as a $\Pin$ submanifold of $\TQ$, or as a relatively $\Pin$ submanifold with respect to the background class $b \in H^{*}(\TQ, \bZ_{2}) $ differ by $(-1)^{N_{b} \cdot u}$.  
\end{lem}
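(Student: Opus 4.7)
My plan is to trace through the proof sketch of Lemma~\ref{lem:orienting_mod_spaces} and isolate the single step at which the choice of background class enters. Both isomorphisms in~\eqref{eq:gluing_iso_dbar_disc} arise from the same three-step construction: (i)~deforming $u$ through smooth maps to a representative $u_1$ whose boundary lies in the $3$-skeleton of $\TQ$; (ii)~gluing the Cauchy--Riemann operators $D_{u_1}, D_{x_d}, \ldots, D_{x_1}$ at their ends to produce a single $\dbar$-operator on a once-punctured disc; and (iii)~selecting an isotopy, through Fredholm operators on the disc, from this glued operator to $D_{x_0}$. Steps (i) and (ii) are manifestly independent of the choice of $\Pin$ conventions, so the entire comparison reduces to step~(iii), where the $\Pin$ data is used to fix the homotopy class of the isotopy.

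In step~(iii), the isotopy class is pinned down by requiring that, upon summing the boundary conditions with an auxiliary vector bundle over the disc, the result admit a $\Pin$ structure restricting at both ends to the prescribed ones. For the trivial convention, the auxiliary bundle is a trivial bundle and the boundary $\Pin$ structures are those on the $\Lambda_{x_k}$. For the $b$-twisted convention, the auxiliary bundle is assembled from $u_1^{*}(E_b)|_{\partial S}$ together with the pullbacks of $E_b$ along the chords, and the boundary $\Pin$ structures are the ones on $\Lambda_{x_k} \oplus x_k^{*}(E_b)$, which by our choice split as direct sums of the two factors. Since both conventions use the \emph{same} $\Pin$ structure on the $\Lambda_{x_k}$ factors, the two isotopies coincide if and only if the $\Pin$ structure on the glued $E_b$-bundle over the boundary of the disc extends over the whole disc; when it does not, the two choices differ by a generator of $\pi_1$ of the space of Lagrangian paths, and the resulting isomorphisms differ by $-1$.

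The remaining question is purely topological: the obstruction to extending a $\Pin$ structure on $u_1^{*}(E_b)|_{\partial S}$ over $S$ is
\[
w_2(u_1^{*}(E_b)) \;=\; u_1^{*}(b) \;\in\; H^{2}(S, \partial S;\bZ_2),
\]
and its pairing with the relative fundamental class $[S, \partial S]$ equals the mod~$2$ intersection number of $u_1$ with any cycle Poincar\'e dual to $b$. Taking this cycle to be $N_b$, which by assumption is disjoint from the Lagrangians and hence from $\partial S$, this intersection number is stable under compactly supported homotopies of $u$, so it coincides with $N_b \cdot u$. This yields the claimed sign $(-1)^{N_b \cdot u}$.

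The main obstacle will be justifying rigorously that the comparison in the second paragraph genuinely reduces to the $E_b$-factor of the boundary condition. This rests on the splitting property of our chosen $\Pin$ structures on $\Lambda_{x_k} \oplus x_k^{*}(E_b)$, together with the general fact that summing a Fredholm boundary condition with a bundle carrying a globally defined $\Pin$ structure leaves the extension obstruction unchanged. Once this splitting is carefully unpacked and shown to commute with the gluing of $\dbar$-operators at the punctures, the remainder is a standard Poincar\'e duality argument.
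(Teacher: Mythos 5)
Your proposal is correct and follows essentially the same route as the paper's proof: both identify the choice of isotopy in the final step of Lemma~\ref{lem:orienting_mod_spaces} as the unique point where the background class enters, reduce the comparison to whether the chosen $\Pin$ structure on $u_1^{*}(E_b)|\partial S$ extends over $S$, and identify the obstruction as the mod~$2$ intersection number of $u_1$ (hence $u$) with $N_b$. Your Poincar\'e-duality framing makes slightly more explicit what the paper treats as immediate, but there is no substantive difference.
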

\begin{proof}
Fix an isotopy of operators which induces the isomorphism (\ref{eq:gluing_iso_dbar_disc}) in the untwisted case, i.e. such that the associated family of Lagrangian boundary conditions admits a $\Pin$ structure restricting at the two endpoints of the isotopy to the $\Pin$ structure on $\Lambda_{x_0}$, and on the glued boundary conditions. By definition, this given isotopy induces the desired isomorphism for the twisted case if and only the direct sum with the pullback of $E_{b}$ admits a $\Pin$ structure, i.e. if and only if the pullback of $E_{b}$ under $u_{1}$ is $\Pin$.  The obstruction is precisely the intersection number of $u_{1}$ with $N_{b}$, which, because $u$ and $u_1$ have the same asymptotic and boundary conditions, agrees with the intersection number of $u$ with $N_{b}$.
\end{proof}

\subsection{Signed operations} \label{sec:signs}

To obtain operations from Equations \eqref{eq:orientation_higher_product} and \eqref{eq:orientation_higher_maps}, one must choose orientations on the abstract moduli space $\Disc_{d} $ and $\Pil_{d} $,  or in the exceptional cases,  a trivialisation of the $\bR$ action on the moduli spaces of strips.  The orientations on $ \Disc_{d} $ is fixed by identifying with the configuration space of $d-2$ ordered points on an interval, which correspond to the marked points $\xi^{3}, \ldots, \xi^{d}$; this also fixes an orientation on $ \Pil_{d} $  via the isomorphism of Equation \eqref{eq:half_discs_are_discs}.  In the case of a strip, we shall use $\partial_{s}$ as the generator of the action of $\bR$.      For simplicity, we shall also fix trivalisations of $\ro_{q}$ for every intersection point between a Lagrangian $L \in \Ob(\Wrap_{b}(M))$ and $Q$.

We shall use the following conventions in determining signs:  Every orientation on a manifold $X$ induces an orientation on its boundary via the canonical isomorphism
\begin{equation} \label{eq:orient_boundary}
  \lambda(X) \cong \nu \otimes \lambda(\partial X)
\end{equation}
where $\nu$ is the normal bundle of $\partial X$ which is canonically trivialised by the outwards normal vector.  Whenever $X$ is a moduli space, say for specificity $ \Disc_{d}(x_0;\vx)   $,  Lemma \ref{lem:orienting_mod_spaces} gives an orientation \emph{relative} the orientations lines associated to the inputs and outputs (and an orientation of the abstract moduli space).  In all the cases we consider, the boundary of the moduli space is a product, and hence inherits two relative orientations, one coming from Equation \eqref{eq:orient_boundary}, the other from taking the product of the two relative orientations coming from Lemma \ref{lem:orienting_mod_spaces}.  The discrepancy between these two signs must be computed.  We illustrate the computation by providing the proof of some sign-related results.  The first is a Lemma from Section \ref{sec:at-level-objects}:

\begin{proof}[Proof of Lemma \ref{lem:product_orientations_strips}]
By rearranging the terms in Equation \eqref{eq:orientation_higher_product}, we obtain an isomorphism
\begin{equation} \label{eq:orientation_isomorphism_discs} \lambda(\Pilbar(q_i, q_j)) \cong \langle \partial_s \rangle  \otimes \ro_{q_i} \otimes \ro^{\vee}_{q_j} . \end{equation}
Taking the tensor product of these isomorphisms for the pairs $(q_i,q_k)$ and $(q_k, q_j)$, we obtain
\begin{equation*} \lambda(\Pilbar(q_i, q_k)) \otimes   \lambda(\Pilbar(q_k, q_j)) \cong \langle \partial_s\rangle \otimes \ro_{q_i} \otimes \ro^{\vee}_{q_k} \otimes  \langle \partial_s\rangle \otimes \ro_{q_k} \otimes \ro^{\vee}_{q_j}. \end{equation*}
The key observation coming from gluing theory is that the first translation vector field corresponds to a vector pointing outwards along this boundary stratum of $\Pilbar(q_i, q_j)$ (see Section (12f) of \cite{seidel-book}). If we identify it with the normal vector $\nu$, reorder the terms, and cancel the copy of $\ro^{\vee}_{q_k} \otimes \ro_{q_k}$  we arrive at
\begin{equation}  \lambda(\Pilbar(q_i, q_k)) \otimes   \lambda(\Pilbar(q_k, q_j)) \cong \nu \otimes    \langle \partial_s\rangle \otimes \ro_{q_i} \otimes \ro^{\vee}_{q_j}. \end{equation}
with a Koszul sign equal to
\begin{equation}(-1)^{|q_k| + |q_i|}  . \end{equation}  
\end{proof}

Next, we prove a Lemma from Section \ref{sec:construction-functor}

\begin{proof}[Proof of Lemma \ref{lem:orientaiton_error_moduli_half_disc_1_puncture}]
Since $\Pil_{1}$ is rigid, Equation \eqref{eq:orientation_higher_maps} specialises to
\begin{equation*}
\lambda( \Pil_{1}(q_0 , x, q_1) )  \cong   \ro_{q_0} \otimes  \ro_{q_1}^{\vee}   \otimes \ro^{\vee}_{x}.
\end{equation*}
If we take the tensor product of Equation \eqref{eq:orientation_isomorphism_discs} with the corresponding isomorphism for $  \Pil_{1}( q_0 , x_0 , q_1 )  $, we obtain
\begin{equation*}
  \lambda( \Pil_{1}(q_0 , x_0 , q_1) ) \otimes \lambda (   \Disc(x_0,x))  \cong   \ro_{q_0}   \otimes \ro^{\vee}_{x_0}  \otimes  \ro^{\vee}_{q_1} \otimes \langle \partial_s \rangle  \otimes \ro_{x_0} \otimes \ro^{\vee}_{x}
\end{equation*}
The translation vector $\partial_s  $  gives rise to an outward pointing normal vector upon gluing, so the only difference between this and the boundary orientation comes from  Koszul signs.  By assumption, $  \Disc(x_0,x)  $ is rigid, so there is no sign associated to moving $ \ro^{\vee}_{q_1}  $ past $ \langle \partial_s \rangle  \otimes \ro_{x_0} \otimes \ro^{\vee}_{x}  $.  The total sign is therefore   $(-1)^{|x_0| + |q_0| }$  coming from moving $ \langle \partial_s \rangle$  to be the first factor.

Next, we consider the situation where a strip breaks at the intersection point between $L_1$ and $Q$:
\begin{equation*}
  \lambda( \Pil_{1}(q_0 , x , q'_1) ) \otimes \lambda (   \Disc(q'_1, q_1)  )  \cong   \ro_{q_0}  \otimes \ro^{\vee}_{x}   \otimes  \ro^{\vee}_{q'_1} \otimes \langle \partial_s \rangle  \otimes \ro_{q'_1} \otimes \ro^{\vee}_{q_1}
\end{equation*}
After gluing, the vector $ \partial_s $ gives an inward pointing normal vector, so the total sign difference between the two orientations is $(-1)^{ 1+ |q_0| + |x| + | q'_1|  } $.  Finally, when breaking occurs at the intersection point between $L_0$ and $Q$, the reader may easily check that the two relative orientations agree.
\end{proof}

Finally, we prove a result from Section \ref{sec:construction-functor}:
\begin{proof}[Proof of Lemma \ref{lem:product_orientation_half_discs}]
When breaking takes place on the outgoing segment, Equation \eqref{eq:orientation_higher_maps} implies that the product orientation is given by an isomorphism 
\begin{multline*}
  \lambda(\Pil_{d_1}(q_0,\vx[1],q'_{d_1}))  \otimes   \lambda(\Pil_{d_2}(q'_{d_1},\vx[2],q_d))   \cong \lambda(\Pil_{d_1})  \otimes \ro_{q_0} \otimes \ro_{x_1}^{\vee} \otimes \cdots \otimes  \ro_{x_{d_1}}^{\vee} \otimes \ro_{q'_{d_1}}^{\vee} \\
\otimes \lambda(\Pil_{d_2})  \otimes \ro_{q'_{d_1}} \otimes \ro_{x_{d_1+1}}^{\vee} \otimes \cdots \otimes  \ro_{x_d}^{\vee} \otimes \ro_{q_d}^{\vee} .
\end{multline*}
As the dimension of $\Pil_{d_2}  $ is $d_2 +1$, the Koszul sign introduced by moving $\lambda( \Pilbar_{d_2}) $ to be adjacent to $ \lambda( \Pilbar_{d_1}) $ is $ (d_2+1)\left( |q'_{d_1}| + | q_0 | +  \sum_{i=1}^{d_1} | x_{i}| \right) $.  The additional term in Equation \eqref{eq:sign_flat} comes from the difference between the product orientation of $\Pil_{d_1} \times \Pil_{d_2}$ and the orientation of $\Pil_{d} $.
At the other type of boundary stratum, the product orientation is given by 
\begin{multline*} \lambda(\Pil_{d_1}(q_0,\vx[1],q_d))  \otimes   \lambda(\Disc_{d_2}(y;\vx[2])   \cong \lambda(\Pil_{d_1})  \otimes \ro_{q_0} \otimes \ro_{x_1}^{\vee} \otimes \cdots \otimes \ro_{x_k}^{\vee} \otimes \ro_{y}^{\vee}\otimes   \ro_{x_{k+d_2+ 1}}^{\vee} \otimes \cdots  \otimes  \ro_{x_d}^{\vee} \otimes \ro_{q_d}^{\vee}  \\ \otimes \lambda(\Disc_{d_2})   \otimes \ro_{y} \otimes  \ro_{x_{k+1}}^{\vee} \otimes \cdots \otimes  \ro_{x_{k+d_2} }^{\vee}. \end{multline*}
Since we're assuming that $\Disc(y,\vx[2])  $ is rigid, there is no sign associated to reordering the factors of the right hand side as
 \begin{equation*}
\lambda(\Pil_{d_1})  \otimes \ro_{q_0} \otimes \ro_{x_1}^{\vee} \otimes \cdots \otimes \ro_{x_k}^{\vee} \otimes \ro_{y}^{\vee}\otimes  \lambda(\Disc_{d_2})  \otimes \ro_{y} \otimes \ro_{x_{k+1}}^{\vee} \otimes  \cdots  \otimes  \ro_{x_d}^{\vee} \otimes \ro_{q_d}^{\vee}   .
\end{equation*}
Moving $ \lambda(\Disc_{d_2}) $ to be adjacent to $\lambda(\Pil_{d_1})  $ introduces a Koszul sign of $d_2 (d_2 +  |q_0| + \sum_{j=1}^{k+d_2} |x_k| ) $.  The difference with Equation \eqref{eq:sign_sharp} again comes from the orientation of the boundary of the abstract moduli space of discs.   
\end{proof}

\subsection{On the gluing theorem}
\label{app:manifold-with-boundary}
In this section we sketch the proof of Proposition \ref{prop:moduli_spaces_strips_manifold}.  This is a standard consequence of a gluing theorem, proved in slightly different ways by many different people, but we will describe a specific approach, expecting that expert readers will prefer to replace it by their own.  The method we explain in this appendix has the advantage of using only the most elementary version of the implicit function theorem in its proof, but the disadvantage of requiring a series of auxiliary geometric choices.    Our approach closely follows that of \cite{FOOO}.

First, the Gromov compactification of  $\Discbar(q_i, q_j)$ is by definition the union of $ \Disc(q_i, q_j) $ with the images of embeddings
\begin{equation} \Discbar(q_i, q_{i_1}) \times \Discbar(q_{i_1}, q_j) \to  \Discbar(q_i, q_j) \end{equation}
which satisfy \eqref{eq:compatible_boundary_strata_inclusions}.  An application of the Sard-Smale theorem in the appropriate Banach space implies that  $ \Disc(q_i, q_j) $ is a smooth manifold of the appropriate dimension for generic choices of perturbation data.  An index computation implies that the difference between the dimension of $ \Disc(q_i, q_j) $  and 
\begin{equation*} \Disc(q_i, q_{i_1}) \times  \cdots \times \Discbar(q_{i_r}, q_{j})  \end{equation*} 
is $r$ (the number of factor minus $1$).  Let $\vu = (u^0, \ldots, u^r)$ denote a point in such a stratum.  We must prove that we may choose a neighbourhood $U$ of $\vu$ in this product of lower dimensional manifolds so that there is local homeomorphism (the result of a gluing theorem)
\begin{equation}\G \co  U \times [0,\epsilon)^{r} \to  \Discbar(q_{i}, q_{j}) . \end{equation}

In practice one uses the stratification on $[0,\epsilon)^{k}$ and constructs the above map separately for each stratum, checking ultimately that the resulting map is continuous; we shall focus on the top stratum in which none of the gluing coordinates vanish.  To construct these gluing map, we assume without loss of generality that each map $u_\ell$ has non-vanishing derivative at the origin (this may be achieved by pre-composing $u_\ell$ by a translation), and set $z_0^{\ell} = 0$.  In addition, writing  $d_{\ell}$  for the dimension of  $\Disc(q_\ell, q_{\ell+1})$, we choose a sequence of points $(z^{\ell}_1, \ldots, z^{\ell}_{d_{\ell}}) \in Z^{d_{\ell}}$ which are regular points for $u_{\ell}$ and such that the derivative of the evaluation map at these points
\begin{equation*}\ev^{\ell} = (\ev_{z^\ell_1}, \ldots , \ev_{z^\ell_{d_{\ell}}} ) \co  \Disc(q_\ell, q_{\ell+1}) \to M^{d_\ell} \end{equation*}
is an isomorphism at $u_{\ell}$.  We fix a product of hypersurfaces 
\begin{equation*} N_{\ell} = N_{1,\ell} \times \cdots \times N_{d_\ell+1,\ell} \subset  M^{d_\ell+1} \end{equation*} 
 which is transverse to the image of 
\begin{equation*}   D_{\vu} \ev^{\ell}   \oplus  D_{z_0^{\ell}} u^{\ell}|\partial_t \end{equation*} 
and such that the inverse image of each factor under $u^{\ell}$ is transverse to the lines obtained by fixing the $t$-coordinate on the strip.

Given a sequence  $\vv = (v^0, \ldots, v^r)$ of holomorphic maps sufficiently close to $\vu$, there is a unique parametrisation  $v^{\ell}$ which $C^{\infty}$ close to $u^{\ell}$ and such that $0 \in (v^{\ell})^{-1} (N_{d_\ell+1,\ell})$.  Having fixed this parametrisation, the inverse image under $v^{\ell}$ of the remaining factors of $N_{\ell}$ determines a unique sequence of points $ (z^{\ell}_1(v^{\ell}) , \ldots, z^{\ell}_{d_{\ell}}(v^{\ell})) $ which are $C^0$ close to the sequence   $(z^{\ell}_1, \ldots, z^{\ell}_{d_{\ell}})$ and such that the $t$-coordinates agree.  In particular, fixing a sufficiently small neighbourhood $U_{\ell}$ of $v^{\ell}$ in  $\Disc(q_\ell, q_{\ell+1})$, we obtain an embedding
\begin{equation}  \label{eq:embedding_moduli_space_marked_points} U_{\ell} \to Z^{d_{\ell}} .\end{equation}

An elementary application of the (finite dimensional) implicit function theorem implies:
\begin{lem} The composition of \eqref{eq:embedding_moduli_space_marked_points} with the projection to the $s$-coordinate defines a local diffeomorphism
\begin{equation} U_{\ell} \to  \bR^{d_{\ell}} .\end{equation} \noproof
\end{lem}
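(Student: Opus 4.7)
The plan is to apply the inverse function theorem. Since both $U_{\ell}$ and $\bR^{d_{\ell}}$ are $d_{\ell}$-dimensional manifolds, it suffices to check that the differential of the composed map at $u^{\ell}$ is injective (and hence an isomorphism).

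For $v^{\ell} \in U_{\ell}$ near $u^{\ell}$, each marked point $z_{i}^{\ell}(v^{\ell})$ is characterised by the implicit equations
\begin{equation*}
v^{\ell}(z_{i}^{\ell}(v^{\ell})) \in N_{i,\ell}, \qquad t(z_{i}^{\ell}(v^{\ell})) = t(z_{i}^{\ell}), \qquad 1 \leq i \leq d_{\ell}.
\end{equation*}
Differentiating at $v^{\ell} = u^{\ell}$, an infinitesimal deformation $\dot v \in T_{u^{\ell}}U_{\ell}$ produces displacements $\dot z_{i} \in \langle \partial_{s}\rangle \subset T_{z_{i}^{\ell}}Z$, whose $\partial_{t}$-component vanishes by the $t$-constraint, satisfying
\begin{equation*}
\dot v(z_{i}^{\ell}) + D u^{\ell}|_{z_{i}^{\ell}}(\dot z_{i}) \in T N_{i,\ell}.
\end{equation*}
Uniqueness of $\dot z_{i}$ here uses the transversality of $(u^{\ell})^{-1}(N_{i,\ell})$ to the horizontal foliation, equivalently $D u^{\ell}|_{z_{i}^{\ell}}(\partial_{s}) \notin T N_{i,\ell}$. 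The differential of the composed map then sends $\dot v$ to the tuple of $s$-components $(s(\dot z_{1}), \ldots, s(\dot z_{d_{\ell}})) \in \bR^{d_{\ell}}$.

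To verify injectivity, suppose $\dot z_{i} = 0$ for all $i$. Then $\dot v(z_{i}^{\ell}) \in T N_{i,\ell}$ for each $i$, so that $D_{u^{\ell}}\ev^{\ell}(\dot v)$ lies in $\prod_{i=1}^{d_{\ell}} T N_{i,\ell}$. The transversality hypothesis on $N_{\ell}$ implies in particular that the image of $D_{u^{\ell}}\ev^{\ell}$ is transverse to $\prod_{i=1}^{d_{\ell}} T N_{i,\ell}$ inside $\prod_{i=1}^{d_{\ell}} T_{u^{\ell}(z_{i}^{\ell})}M$; a direct dimension count, using that $D_{u^{\ell}}\ev^{\ell}$ is injective of rank $d_{\ell}$ and that each $N_{i,\ell}$ is a hypersurface, shows that this intersection is trivial. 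Hence $D_{u^{\ell}}\ev^{\ell}(\dot v) = 0$, and injectivity of $D_{u^{\ell}}\ev^{\ell}$ forces $\dot v = 0$. The main, mild, obstacle is the bookkeeping associated with the parametrisation ambiguity: the factor $N_{d_{\ell}+1,\ell}$, together with the companion term $D_{z_{0}^{\ell}}u^{\ell}(\partial_{t})$ in the transversality hypothesis, plays no role in the evaluation but only enforces the normalisation $v^{\ell}(0) \in N_{d_{\ell}+1,\ell}$ that fixes the translation parameter, and one must track this carefully to ensure that the neighbourhood $U_{\ell}$ on which the linearisation is performed is the correct one.
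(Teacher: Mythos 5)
Your proof is correct and supplies exactly the ``elementary application of the (finite dimensional) implicit function theorem'' that the paper invokes but does not spell out (the lemma is marked \verb|\noproof|). You correctly differentiate the implicit equations $v^{\ell}(z_{i}^{\ell}(v^{\ell}))\in N_{i,\ell}$, $t(z_{i}^{\ell}(v^{\ell}))=t(z_{i}^{\ell})$, reduce injectivity of the differential of the composed map to the vanishing of $\operatorname{im}(D_{u^{\ell}}\ev^{\ell})\cap\prod_{i=1}^{d_{\ell}}TN_{i,\ell}$, and obtain this from the transversality hypothesis on $N_{\ell}$ by projecting away the last factor and counting dimensions, with injectivity of $D_{u^{\ell}}\ev^{\ell}$ finishing the argument.

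One small misattribution in your closing remark: what makes the normalisation $v^{\ell}(0)\in N_{d_{\ell}+1,\ell}$ a valid local slice for the translation action is the \emph{second} condition imposed on $N_{\ell}$, namely that $(u^{\ell})^{-1}(N_{d_{\ell}+1,\ell})$ meet the horizontal lines $\{t=\mathrm{const}\}$ transversely, equivalently $Du^{\ell}|_{z_{0}^{\ell}}(\partial_{s})\notin TN_{d_{\ell}+1,\ell}$. The companion term $D_{z_{0}^{\ell}}u^{\ell}|\partial_{t}$ in the first transversality hypothesis does not enforce the normalisation; its role is to supply the extra dimension so that $\operatorname{im}(D_{u^{\ell}}\ev^{\ell})\oplus\langle Du^{\ell}|_{z_{0}^{\ell}}(\partial_{t})\rangle$ is a $(d_{\ell}+1)$-dimensional subspace complementary to $TN_{\ell}$ inside $TM^{d_{\ell}+1}$, and it is from this that your projection to the first $d_{\ell}$ factors extracts the transversality you use. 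This is a point of bookkeeping, not a gap in your argument.
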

To summarise this discussion, we have chosen a product of hypersurfaces such that the moduli space of holomorphic discs near $u^{\ell}$ is parametrised by the $s$-coordinates of the unique inverse image of $  N_{k,\ell} $  whose $t$-coordinate agrees with $t$-coordinate of the original marked point $z^{\ell}_k$.

The choice of  $N_{\ell}$ determines a family of right inverses to the linearisation $\dbar$ operator
\begin{equation} C^{\infty}(u_{\ell}^{*} T M) \to C^{\infty}(u_{\ell}^{*} T M \otimes \Omega^{0,1}), \end{equation}
Indeed, this operator becomes an isomorphism when restricted to those sections whose value at $z^{\ell}_r$ lies in $TN_{r,\ell}$.   The inverse of this isomorphism gives a right inverse to the linearisation of the $\dbar$ operator in any reasonable Sobolev completion of the space of smooth sections.  To be precise, one must work with functions satisfying an exponential decay condition along the ends of the strip, which is reflected in weighted Sobolev spaces when passing to completions.

We are now ready to proceed with gluing holomorphic curves.  First, we define a pre-gluing map
\begin{align*} \preG \co   U \times (0,\epsilon)^{r} & \to C^{\infty}(Z, M)  \\
(\vv , \vepsilon)  = (v^0, \ldots, v^r, \epsilon_1, \ldots, \epsilon_r) & \mapsto v^0 \#_{\epsilon_1} v^1 \#_{\epsilon_2 } \cdots \#_{\epsilon_{r}} v^r   
 \end{align*}
obtained by removing the half-infinite strips 
\begin{equation*} [\frac{2}{\epsilon_\ell}, +\infty) \times [-1,1] \textrm{ and } (-\infty, \frac{-2}{\epsilon_\ell} ] \times [-1,1] \end{equation*}
respectively from the domains of the curves $(v^{\ell-1},v^{\ell})$ and gluing the corresponding maps along the finite strips
\begin{equation*} [\frac{1}{\epsilon_\ell}, \frac{2}{\epsilon_\ell}] \times [-1,1] \textrm{ and } [\frac{-2}{\epsilon_\ell}, \frac{-1}{\epsilon_\ell} ] \times [-1,1] \end{equation*}
using a cutoff function.  This makes sense because these curves exponentially converge along the appropriate end to the same point in $M$.   We assume that $\epsilon$ is small enough that the regions along which we glue the curves are disjoint from all marked points on the domains of $u_{\ell-1}$ and $u_{\ell}$.

As usual, there is an $\bR$ ambiguity in the parametrisation of $\preG(\vv,\vepsilon)$.  We resolve this ambiguity by making the origin of the domain of $\preG(\vv,\vepsilon)$ agree with the marked point coming from the origin of the domain of $v^r$. In other words, we start with the domain of $v^r$, and glue the domains of the remaining curves to it.  In particular, the pre-glued curve is equipped with $r-1$ marked points
\begin{equation*}  (z^0_0(\vepsilon), \ldots   ,  z^{r-1}_0(\vepsilon) )  \end{equation*}
coming from the origins of the domains of the curves $v^{\ell}$ (in particular, these marked points all have vanishing $t$-coordinate, and their $s$-coordinate is a sum of terms of the form $\frac{3}{\epsilon_\ell}$).  Moreover, there are $\sum_{\ell=1}^{r} d_{\ell} $ marked points
\begin{equation*}  (z^0_1(\vv, \vepsilon), \ldots   , z^0_{d_{0}} (\vv, \vepsilon), z^1_1(\vv, \vepsilon),\ldots, z^1_{d_{1}} (\vv, \vepsilon), z^2_1(\vv, \vepsilon),  \ldots , z^{r}_{d_{r}}(\vv, \vepsilon) )  \end{equation*}
 coming from the marked points on each curve $v^{\ell}$. The $s$ coordinate of of  $z^{\ell}_{k}(\vv, \vepsilon)$  is simply the sum  $z^{\ell}_0(\vepsilon) +  z^{\ell}_{k}(v_{\ell})$. Letting $d$ stand for the dimension of $\Disc(q_i,q_j)$, we obtain a sequence of $d$ marked points
\begin{equation}  (z_1(\vv, \vepsilon), \ldots, z_{d}(\vv, \vepsilon))  \end{equation}
on the domain of $\preG(\vv,\vepsilon)$ with the convention that the last $r$ marked points correspond to the origins of the domains of $\vv$, and we set $z_0(\vv,\vepsilon)$ to be the origin as before.    Note that at each such marked point the image of $\preG(\vv,\vepsilon)$ intersects a distinguished hypersurface $N_{k}$ transversely; these hypersurfaces are the ones we chose earlier in order to rigidify each curve $v^{\ell}$.

As $\preG(\vv,\vepsilon)$ is not holomorphic, there is no natural linearisation of the $\dbar$ operator acting on the tangent to the  space of smooth maps at $\preG(\vv,\vepsilon)$.  To obtain such a linearisation, we choose for each curve $v^{\ell}$, a family of metrics on $M$ parametrised by $Z$ which is constant (and independent of $\ell$ away from a compact set) and such that $N_{\ell,k}$ is totally geodesic with respect to the metric corresponding to the marked point $z^{\ell}_k(\vv)$.  Note that for $\epsilon$ sufficiently small, this choice naturally induces a choice of metrics in $M$ parametrised by the domain of  $\preG(\vv,\vepsilon)$ such that $N_{k}$ is totally geodesic with respect to the metric corresponding to $z_k(\vv,\vepsilon)$.

Let us write
\begin{equation} C^{\infty}_{\vN}(\preG(\vv,\vepsilon)^{*} TM)   \end{equation}
for those sections of $ \preG(\vv,\vepsilon)^{*} TM $ whose value at the $k$\th marked point lies in the tangent space to $N_k$.   Again, one must impose an appropriate decay condition at the ends in order to obtain a well-behaved elliptic problem.
\begin{lem}
For $\epsilon$ sufficiently small, the linearisation of the $\dbar$ operator becomes an isomorphism when restricted to $C^{\infty}_{\vN}(\preG(\vv,\vepsilon)^{*} TM)$.  Moreover, in an appropriate Sobolev completion of the space of smooth functions, the following quantities are uniformly bounded with respect to $\vepsilon$:
\begin{enumerate}
\item The norm of $\dbar(\preG(\vv,\vepsilon)  ) $.
\item The norm of the unique right inverse to the linearisation of the $\dbar$ operator  whose image is  $C^{\infty}_{\vN}(\preG(\vv,\vepsilon)^{*} TM)$.
\item The difference between linearisation of the $\dbar$ operator at $\preG(\vv,\vepsilon)$ and at points obtained by exponentiating vector fields of a fixed norm.
\end{enumerate}
\noproof
\end{lem}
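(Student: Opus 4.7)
The final unnumbered lemma collects the three standard analytic ingredients needed to feed into a Newton/Picard iteration producing a true holomorphic curve from the pre-glued approximation $\preG(\vv,\vepsilon)$. The proof has the same shape for all three assertions: localise to the regions coming from the individual $v^{\ell}$ and control the error from the gluing neck by the exponential decay of the maps toward their common asymptotic chord. I would work throughout in an exponentially-weighted Sobolev completion $W^{1,p}_{\delta}$, $p>2$, of $C^{\infty}_{\vN}$, whose weight decays at the two genuine ends $s\to\pm\infty$ of $Z$; in the gluing neck of length $\sim 1/\epsilon_{\ell}$ the metric is the standard cylindrical one, so no neck weight is needed.

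For the bound on $\|\dbar(\preG(\vv,\vepsilon))\|$, note that each $v^{\ell}$ is already holomorphic, so $\dbar(\preG(\vv,\vepsilon))$ is supported in the finite gluing strips $[\tfrac{1}{\epsilon_{\ell}},\tfrac{2}{\epsilon_{\ell}}]\times[-1,1]$. On these strips the pre-glued map is obtained by a cut-off interpolation between $v^{\ell-1}$ and $v^{\ell}$, both of which converge exponentially to the same Hamiltonian chord as $s\to\pm\infty$. Consequently, the error made by the cut-off is pointwise bounded by the $C^{1}$ norm of the difference $v^{\ell-1}-v^{\ell}$ in cylindrical coordinates, which by Theorem A of \cite{RS} decays like $e^{-c/\epsilon_{\ell}}$; integrating against the weight yields a bound that tends to zero (and in particular is uniformly bounded) as $\vepsilon\to 0$.

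For the isomorphism and the norm of the right inverse, the plan is to build an \emph{approximate right inverse} from the inverses on the pieces and show it differs from a true inverse by an operator of small norm, invoking Neumann series. Concretely: by the choice of $N_{\ell}$ the linearisation $D_{v^{\ell}}$ is an isomorphism onto sections satisfying the $N$-conditions, with a uniformly bounded inverse $Q_{\ell}$. Given $\eta$ on $\preG(\vv,\vepsilon)$, decompose $\eta=\sum\beta_{\ell}\eta$ using cut-offs $\beta_{\ell}$ subordinate to the neighbourhoods of the pieces, apply $Q_{\ell}$ on each piece, and reassemble using cut-offs $\widetilde\beta_{\ell}$ with $\widetilde\beta_{\ell}\equiv 1$ on the support of $\beta_{\ell}$. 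The resulting operator $Q_{\preG}$ satisfies $D_{\preG}\circ Q_{\preG}=\mathrm{id}+E$, where $E$ is a sum of commutators $[D_{\preG},\widetilde\beta_{\ell}]Q_{\ell}\beta_{\ell}$ that live only in the neck. Exponential decay of elements of $\ker D_{v^{\ell}}$ at the neck (a consequence of the usual elliptic regularity in cylindrical coordinates with the chosen weight) bounds $\|E\|$ by a power of $e^{-c/\epsilon_{\ell}}$, so for $\vepsilon$ small $\mathrm{id}+E$ is invertible and $Q_{\preG}(\mathrm{id}+E)^{-1}$ is the desired uniformly bounded right inverse. Injectivity of $D_{\preG}$ on $C^{\infty}_{\vN}$ follows from the same comparison: if $\xi\in\ker D_{\preG}$, decomposing $\xi=\sum\widetilde\beta_{\ell}\xi$ and feeding the pieces back through $Q_{\ell}$ forces $\xi$ to be small in terms of itself, hence zero.

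The third item is the usual quadratic estimate. With $u_{w}=\exp_{\preG(\vv,\vepsilon)}(w)$, the difference $D_{u_{w}}-D_{\preG}$ is given by an explicit algebraic expression involving $\nabla w$, the curvature and torsion of the chosen metric, and $\nabla I_{t}$; this is linear in the $1$-jet of $w$ with coefficients of bounded $C^{0}$ norm over $M$, so $\|(D_{u_{w}}-D_{\preG})\xi\|\leq C\|w\|_{W^{1,p}_{\delta}}\|\xi\|_{W^{1,p}_{\delta}}$, with $C$ independent of $\vepsilon$. The main obstacle in this whole argument is the careful choice of weight and cut-off geometry on the neck: one must ensure that the weight is trivial enough on the neck that the commutator error $E$ is genuinely small (which uses the exponential decay of $Q_{\ell}\beta_{\ell}\eta$ into the neck), while also being strong enough at the genuine ends $s=\pm\infty$ to rule out cokernel contributions coming from the asymptotic chord. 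Once a weight satisfying both conditions is in place, the three conclusions follow by the linear Fredholm package above and the implicit function theorem produces the local homeomorphism $\G$ required for Proposition \ref{prop:moduli_spaces_strips_manifold}.
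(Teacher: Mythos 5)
Your proof is essentially an approximate-right-inverse/Neumann-series argument in the spirit of Floer--Hofer, Donaldson, and McDuff--Salamon, whereas the paper explicitly takes the route of \cite{FOOO}: the remark that follows the lemma in the text says ``Following \cite{FOOO}, we would prove this result by choosing a family of weights depending on the parameter $\vepsilon$'' and ``the second can be proved by constructing a pre-gluing map at the level of tangent vector fields following \cite{FOOO}.'' So both the weight scheme and the mechanism for producing the bounded right inverse differ. In your version the weight is fixed (supported at the genuine ends of $Z$, with the neck carrying the flat cylindrical norm) and the right inverse is obtained by patching the $Q_{\ell}$ with cutoffs, verifying $D_{\preG}Q_{\preG}=\mathrm{id}+E$ with $\|E\|$ small, and inverting $\mathrm{id}+E$. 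In the paper's intended version the weight is chosen $\vepsilon$-dependently, peaking in the necks, and the right inverse comes by pre-gluing kernel/coimage vectors and comparing with the splitting determined by the $N_{k}$; the author singles out the point, specific to this paper's construction, that one can identify the pre-glued tangent space exactly as the Sobolev completion of $C^{\infty}_{\vN}$. The $\vepsilon$-dependent weight buys a clean comparison between norms on the broken configuration and on the glued strip; the price is more bookkeeping in the weight. Your fixed-weight approach is more elementary but pushes the difficulty into the cutoffs.

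One point you should be explicit about, because the phrase ``no neck weight is needed'' hides it: with a fixed weight the uniform bound on $Q_{\preG}$ depends on a norm comparison in the neck. When you write $\eta=\sum\beta_{\ell}\eta$ and feed $\beta_{\ell}\eta$ into $Q_{\ell}$, you are implicitly extending it by zero to the full strip $Z$ on which $v^{\ell}$ lives and measuring it in the $W^{1,p}_{\delta}$ norm there. If $\delta>0$ and $\beta_{\ell}$ is allowed to be nonzero deep in the neck, the end weight $e^{\delta|s|}$ on $Z$ can be of size $e^{c\delta/\epsilon_{\ell}}$ where $\beta_{\ell}$ lives, so $\|\beta_{\ell}\eta\|_{Z}$ can blow up relative to $\|\eta\|$ on the glued domain and the ``uniformly bounded'' claim for $Q_{\preG}$ fails. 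The fix is either to confine $\mathrm{supp}(\beta_{\ell})$ to the region where the pullback weight from $Z$ is $O(1)$ (and then the transition region for $\widetilde\beta_{\ell}$ must be taken over the whole neck so that $\|d\widetilde\beta_{\ell}\|_{\infty}=O(\epsilon_{\ell})$), or to switch to the $\vepsilon$-dependent weights of \cite{FOOO} so that the norms on the pieces and on the glued curve match by construction. Your item (1) argument (support of $\dbar\preG$ in the necks plus exponential decay from \cite{RS}) and your item (3) argument (the quadratic estimate, for which the paper points to Appendix B of \cite{abouzaid-exotic}) are both fine and match the paper's expectations.
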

\begin{rem}
The most subtle part of the construction is the choice of a Sobolev completion in which all these properties hold at once.  Following \cite{FOOO}, we would prove this result by choosing a family of weights depending on the parameter $\vepsilon$.  The properties are listed in what we believe is their order of difficulty: the first is an essentially elementary estimate while the second can be proved by constructing a pre-gluing map at the level of tangent vector field following \cite{FOOO}.  In general, one cannot explicitly describe the result of pre-gluing tangent vectors in the image of the chosen right inverse along the component curves of $\vv$.  However, for our explicit choices of right inverses, one can easily check that the result is the Sobolev completion of $C^{\infty}_{\vN}(\preG(\vv,\vepsilon)^{*} TM)$. The last property above is a version of the quadratic inequality (see Appendix B of \cite{abouzaid-exotic} for a proof in a related setting).
\end{rem}

The previous Lemma is the necessary analytic input to apply the implicit function theorem.  In this case, the consequence is rather easy to state:
\begin{cor}
There exists a unique vector in $C^{\infty}_{\vN}(\preG(\vv,\vepsilon)^{*} TM)$, which is bounded with respect to the preferred Sobolev norm, and whose image under the exponential map is an honest solution to the $\dbar$ equation.  \noproof
\end{cor}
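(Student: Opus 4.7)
The plan is to apply a quantitative Banach space implicit function theorem (equivalently, a Newton iteration via the Banach contraction principle) to the $\dbar$ equation in a small ball around the pre-glued map $\preG(\vv,\vepsilon)$. Concretely, I would define a map $F$ from a suitable weighted Sobolev completion of $C^{\infty}_{\vN}(\preG(\vv,\vepsilon)^{*} TM)$ to the corresponding completion of $C^{\infty}(\preG(\vv,\vepsilon)^{*} TM \otimes \Omega^{0,1})$ by
\begin{equation*}
  F(X) = \Phi(X)^{-1}\, \dbar\bigl(\exp_{\preG(\vv,\vepsilon)}(X)\bigr),
\end{equation*}
where $\Phi(X)$ is the parallel transport isomorphism identifying $\exp_{\preG(\vv,\vepsilon)}(X)^{*}(TM \otimes \Omega^{0,1})$ with the corresponding bundle at $\preG(\vv,\vepsilon)$ with respect to the chosen family of metrics. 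A small zero of $F$ in $C^{\infty}_{\vN}$ gives the desired honest holomorphic representative.

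Next I would Taylor expand $F(X) = F(0) + D_{0}F \cdot X + N(X)$, where $F(0) = \dbar(\preG(\vv,\vepsilon))$ and $N$ is the quadratic remainder. The preceding Lemma supplies exactly the three uniform-in-$\vepsilon$ estimates needed: (i) $\|F(0)\|$ is small, going to zero with $\vepsilon$; (ii) the restriction of $D_{0}F$ to $C^{\infty}_{\vN}$ admits a right inverse $Q$ of bounded operator norm; and (iii) on bounded balls $N$ satisfies a quadratic estimate $\|N(X) - N(Y)\| \leq C(\|X\| + \|Y\|)\|X - Y\|$. Setting $X = QY$, the equation $F(X) = 0$ becomes the fixed point problem $Y = -F(0) - N(QY)$ for a map $T$ on the target Banach space.

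From (i)--(iii) one checks that $T$ is a contraction on a ball of some radius $r_0$ independent of $\vepsilon$ once $\vepsilon$ is sufficiently small: the Lipschitz constant of $T$ on such a ball is bounded by $C \|Q\|^2 r_0$, which can be made less than $1/2$, while $\|T(0)\| = \|F(0)\| < r_0/2$ by (i). The contraction mapping theorem then produces a unique fixed point $Y_{0}$, and hence a unique solution $X_{0} = QY_{0} \in C^{\infty}_{\vN}$ of bounded norm. Elliptic regularity for the $\dbar$ operator upgrades $\exp_{\preG(\vv,\vepsilon)}(X_{0})$ to a smooth honest holomorphic map, and the $N_{k}$-constraint guarantees that this smooth curve inherits the marked point structure we have been tracking.

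The main obstacle is not this formal fixed-point argument but the dependence of all constants on the gluing parameter $\vepsilon$. Since the neck regions stretch to infinite length as $\vepsilon \to 0$, the quadratic estimate and the bound on $\|Q\|$ must be established in a family of weighted Sobolev norms whose weights degenerate appropriately with $\vepsilon$; absent this, one cannot conclude uniformity. This delicate bookkeeping is exactly the content of the cited Lemma, so in the present corollary the hard analytic work has already been absorbed and only the contraction principle remains to be carried out.
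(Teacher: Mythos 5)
Your argument is exactly the intended one: the paper states the Corollary with no written proof, remarking only that the preceding Lemma is ``the necessary analytic input to apply the implicit function theorem,'' and your Newton/contraction iteration is precisely what that application amounts to. The proposal is correct and matches the paper's (implicit) approach.
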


The gluing map
\begin{equation} \G \co U \times  (0,\epsilon)^{r} \to \Disc(q_i,q_j)\end{equation}
is defined by exponentiating this vector field at every curve obtained by pre-gluing.  The proof that this map  extends continuously upon letting some of the gluing parameters go to $0$ is a direct consequence of Gromov compactness, and is omitted.   It remains therefore to explain why this map is a homeomorphism onto the intersection of a neighbourhood of $\vu$ in $\Discbar(q_i,q_j)$ with $\Disc(q_i,q_j)$.  

To prove this, we first observe that, as a consequence of Gromov compactness, there is a neighbourhood $W$ of $\vu$ in $\Disc(q_i,q_j)$ consisting of curves $w$ which, for each integer $k$ admit a unique point $z_k(w)$ close to $z_k(\vu, \vepsilon )$ for some sequence $\vepsilon$, such that the $t$-coordinates of $z_k(w)$ and $z_k(\vu, \vepsilon )$ agree, and
\begin{equation*}\parbox{36em}{ $N_k$ intersects the image of $w$ transversely at  $z_k(w)$. } \end{equation*}
In particular, taking the $s$-coordinate of each point $z_k$ defines a map
\begin{equation*} R \co  W \to \bR^{d} .\end{equation*}

As the right inverse to the $\dbar$ operator consists of vector fields which, at $z_{k}(\vv, \vepsilon)$ point in the direction of $N_k$, and as $N_k$ is totally geodesic with respect to the chosen metric, we conclude that $\G(\vv, \vepsilon)$ intersects $N_k$ at  $z_{k}(\vv, \vepsilon)$.  This implies:
\begin{lem}
The map $R$ is a right inverse to $\G$. More formally, we have a commutative diagram
\begin{equation*} \xymatrix{ U \times  (0,\epsilon)^k \ar[rr]^{\G} \ar[dr] & & W \ar[dl]^{R} \\
& \bR^{d}  & 
  }  \end{equation*}
where the map  $U \times  (0,\epsilon)^k \to \bR^{d} $ is the codimension $0$ embedding which takes $(\vv, \vepsilon)$ to the $s$-coordinates of $z_{k}(\vv, \vepsilon)$. \noproof
\end{lem}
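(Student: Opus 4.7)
The plan is to unwind the definitions of $\G$ and $R$ and show directly that the $d$ distinguished marked points on the domain of $\G(\vv, \vepsilon)$ --- namely $z_k(\vv, \vepsilon)$ --- are exactly the points whose $s$-coordinates $R$ reads off, thereby making the triangle commute. The only non-formal ingredient is the geometric observation, already flagged in the excerpt, that exponentiating a tangent vector lying in $TN_k$ from a point of $N_k$ with respect to a metric for which $N_k$ is totally geodesic keeps one on $N_k$.

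First I would recall the construction of $\G(\vv, \vepsilon)$: one forms the pre-glued map $\preG(\vv, \vepsilon)$, whose domain carries the marked points $z_k(\vv, \vepsilon)$ built out of the $z^\ell_k(v^\ell)$ and the shifts $z^\ell_0(\vepsilon)$, and then exponentiates the unique small vector field $\xi \in C^\infty_{\vN}(\preG(\vv,\vepsilon)^* TM)$ produced by the implicit function theorem. By the very definition of the subspace $C^\infty_{\vN}$, the section $\xi$ satisfies $\xi(z_k(\vv,\vepsilon)) \in T_{\preG(\vv,\vepsilon)(z_k(\vv,\vepsilon))} N_k$ for each marked point. Because the family of metrics on $M$ was chosen so that $N_k$ is totally geodesic with respect to the metric indexed by the point $z_k(\vv,\vepsilon)$, and because $\preG(\vv,\vepsilon)(z_k(\vv,\vepsilon))$ already lies on $N_k$ (this being how the marked points $z^\ell_k$ were selected in the first place, and gluing does not move them as the gluing regions were arranged to avoid these marked points for $\epsilon$ small enough), the exponential of $\xi$ at $z_k(\vv,\vepsilon)$ remains on $N_k$. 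Hence $\G(\vv,\vepsilon)$ meets $N_k$ at $z_k(\vv,\vepsilon)$.

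Next, I would invoke the transversality and uniqueness set up before the definition of $R$: for curves $w$ sufficiently $C^0$-close to $\vu$, $w$ intersects each hypersurface $N_k$ in a single point $z_k(w)$ close to the corresponding $z_k(\vu,\vepsilon)$, with prescribed $t$-coordinate, and $R(w)$ is defined as the tuple of $s$-coordinates of these $z_k(w)$. By taking the neighbourhoods $U$ and $\epsilon$ sufficiently small, the marked points $z_k(\vv,\vepsilon)$ lie in the prescribed neighbourhoods of $z_k(\vu,\vepsilon)$ and have the correct $t$-coordinates by construction. Combined with the uniqueness statement, this forces the equality $z_k(\G(\vv,\vepsilon)) = z_k(\vv,\vepsilon)$, so $R(\G(\vv,\vepsilon))$ is precisely the tuple of $s$-coordinates of $z_k(\vv,\vepsilon)$, which is the defining formula for the diagonal map $U \times (0,\epsilon)^r \to \bR^d$ in the diagram.

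The only real point requiring care --- and the step I expect to be the main obstacle --- is verifying that the pre-gluing does not disturb the marked points $z^\ell_k$, i.e.\ that for $\vepsilon$ sufficiently small the gluing regions (finite strips of the form $[1/\epsilon_\ell, 2/\epsilon_\ell] \times [-1,1]$) are disjoint from all $z^\ell_k$, and that the cutoff used in the pre-gluing leaves $\preG(\vv,\vepsilon)$ equal to $v^\ell$ in a neighbourhood of each $z^\ell_k$. Given this, the value of $\preG(\vv,\vepsilon)$ at $z_k(\vv,\vepsilon)$ is unchanged, still lies on $N_k$, and the totally-geodesic argument applies verbatim. Assembling these observations yields the claimed commutative diagram, completing the proof.
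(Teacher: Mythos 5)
Your proposal is correct and is essentially the paper's own argument: the paper proves the lemma in the sentence immediately preceding it, observing that the correcting vector field lies in $C^{\infty}_{\vN}(\preG(\vv,\vepsilon)^{*} TM)$ and hence is tangent to $N_k$ at $z_k(\vv,\vepsilon)$, so that by the totally geodesic choice of metrics the exponentiated curve $\G(\vv,\vepsilon)$ still meets $N_k$ there, whence $R$ reads off the $s$-coordinates of the $z_k(\vv,\vepsilon)$. The point you flag as the main obstacle (gluing regions avoiding the marked points) is already built into the construction, where $\epsilon$ is taken small enough for exactly this purpose.
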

\begin{cor}
The gluing map $\G$ is injective. \noproof
\end{cor}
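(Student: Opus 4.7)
The plan is to deduce injectivity of $\G$ purely formally from the commutative diagram supplied by the preceding lemma, without revisiting the gluing construction itself. The key observation is that the lemma factors $\G$ through a map whose post-composition with $R$ coincides with an honest codimension-$0$ embedding
\begin{equation*}
U \times (0,\epsilon)^{r} \hookrightarrow \bR^{d},
\qquad (\vv,\vepsilon) \mapsto \bigl(s\text{-coordinates of } z_{k}(\vv,\vepsilon)\bigr).
\end{equation*}
Because embeddings are in particular injective maps of sets, the composition $R \circ \G$ is injective.

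Now suppose $\G(\vv_{1},\vepsilon_{1}) = \G(\vv_{2},\vepsilon_{2})$ for two pairs in $U \times (0,\epsilon)^{r}$. Applying $R$ to both sides and invoking the commutativity of the diagram in the preceding lemma, we conclude that $(\vv_{1},\vepsilon_{1})$ and $(\vv_{2},\vepsilon_{2})$ have identical images in $\bR^{d}$. The injectivity of the above embedding then forces $(\vv_{1},\vepsilon_{1}) = (\vv_{2},\vepsilon_{2})$, which is exactly the statement that $\G$ is injective.

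There is no real obstacle here: all the analytic content — the construction of $R$, the fact that the hypersurfaces $N_{k}$ are totally geodesic so that $\G(\vv,\vepsilon)$ genuinely intersects $N_{k}$ at $z_{k}(\vv,\vepsilon)$, and the resulting identity $R \circ \G = \mathrm{id}$ on $s$-coordinates — has already been absorbed into the previous lemma. Once one has a left inverse to $\G$ (which is what the lemma provides), injectivity is immediate.
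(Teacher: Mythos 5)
Your proof is correct and matches the paper's intent exactly: the corollary is marked \texttt{\textbackslash noproof} precisely because it follows immediately from the preceding lemma, whose commutative diagram shows that $R \circ \G$ equals an embedding (hence injective), forcing $\G$ to be injective. Your spelling-out of this one-line deduction — and your observation that, despite the lemma's wording, what is really being used is that $R$ is a left inverse to $\G$ up to identifying $U \times (0,\epsilon)^{r}$ with its image in $\bR^{d}$ — is exactly the argument the paper leaves implicit.
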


The existence of this a priori right inverse to the gluing map also helps in proving surjectivity.  Indeed, if the images of $w \in W$ and $\G(\vv,\vepsilon)$ under $R$ agree, then $w$ can be expressed as the exponential of a vector field along $\preG(\vv,\vepsilon)$ lying in  $C^{\infty}_{\vN}(\preG(\vv,\vepsilon)^{*} TM)$.  By the uniqueness part of the implicit function theorem, $w$ and $\G(\vv,\vepsilon)$ must agree if we can prove that this vector field is bounded in the appropriate Sobolev norm.  For $w$ sufficiently close to $\vu$, this follows from Gromov compactness, and a standard estimate on the energy of long strips (see e.g. Section 5d of   \cite{abouzaid-exotic}).  This completes our sketch of the proof of Proposition \ref{prop:moduli_spaces_strips_manifold}.

\begin{bibdiv}
\begin{biblist}
\bib{AS0}{article}{
   author={Abbondandolo, Alberto},
   author={Schwarz, Matthias},
   title={On the Floer homology of cotangent bundles},
   journal={Comm. Pure Appl. Math.},
   volume={59},
   date={2006},
   number={2},
   pages={254--316},
   issn={0010-3640},
   review={\MR{2190223 (2006m:53137)}},
}

\bib{AS}{article}{
author={ Alberto Abbondandolo},
author={  Matthias Schwarz},
title={Floer homology of cotangent bundles and the loop product},
eprint={arXiv:0810.1995},
}

\bib{APS}{article}{
   author={Abbondandolo, Alberto},
   author={Portaluri, Alessandro},
   author={Schwarz, Matthias},
   title={The homology of path spaces and Floer homology with conormal
   boundary conditions},
   journal={J. Fixed Point Theory Appl.},
   volume={4},
   date={2008},
   number={2},
   pages={263--293},
   issn={1661-7738},
   review={\MR{2465553}},
}

\bib{abouzaid-exotic}{article}{
    author={Abouzaid, Mohammed},
title={Framed bordism and Lagrangian embeddings of exotic spheres},
eprint={arXiv:0812.4781},
}

\bib{abouzaid-plumbing}{article}{
   author={Abouzaid, Mohammed},
title={A topological model for the Fukaya categories of plumbings },
eprint={arXiv:0904.1474},
status={to appear in Journal of Differential Geometry},
}

\bib{generate}{article}{
    author={Abouzaid, Mohammed},
title={A geometric criterion for generating the Fukaya category },
eprint={arXiv:1001.4593},
}

\bib{fibre-generates}{article}{
    author={Abouzaid, Mohammed},
title={Maslov $0$ nearby Lagrangians are homotopy equivalent},
eprint={arXiv:1005.0358},
}
\bib{abouzaid-seidel}{article}{
   author={Abouzaid, Mohammed},
   author={Seidel, Paul},
   title={An open string analogue of Viterbo functoriality},
   journal={Geom. Topol.},
   volume={14},
   date={2010},
   number={2},
   pages={627--718},
   issn={1465-3060},
   review={\MR{2602848}},
   doi={10.2140/gt.2010.14.627},
}

\bib{BC}{article}{
   author={Barraud, Jean-Fran{\c{c}}ois},
   author={Cornea, Octav},
   title={Lagrangian intersections and the Serre spectral sequence},
   journal={Ann. of Math. (2)},
   volume={166},
   date={2007},
   number={3},
   pages={657--722},
   issn={0003-486X},
   review={\MR{2373371 (2008j:53149)}},
}

\bib{bondal-kapranov}{article}{
   author={Bondal, A. I.},
   author={Kapranov, M. M.},
   title={Framed triangulated categories},
   language={Russian},
   journal={Mat. Sb.},
   volume={181},
   date={1990},
   number={5},
   pages={669--683},
   issn={0368-8666},
   translation={
      journal={Math. USSR-Sb.},
      volume={70},
      date={1991},
      number={1},
      pages={93--107},
      issn={0025-5734},
   },
   review={\MR{1055981 (91g:18010)}},
}

\bib{floer-lagrangian}{article}{
   author={Floer, Andreas},
   title={Morse theory for Lagrangian intersections},
   journal={J. Differential Geom.},
   volume={28},
   date={1988},
   number={3},
   pages={513--547},
   issn={0022-040X},
   review={\MR{965228 (90f:58058)}},
}

\bib{FHS}{article}{
    author={Floer, Andreas},
    author={Hofer, Helmut},
    author={Salamon, Dietmar},
     title={Transversality in elliptic Morse theory for the symplectic
            action},
   journal={Duke Math. J.},
    volume={80},
      date={1995},
    number={1},
     pages={251\ndash 292},
      issn={0012-7094},
    review={MR1360618 (96h:58024)},
}
\bib{FOOO}{book}{
   author={Fukaya, Kenji},
   author={Oh, Yong-Geun},
   author={Ohta, Hiroshi},
   author={Ono, Kaoru},
   title={Lagrangian intersection Floer theory: anomaly and obstruction.
   Part II},
   series={AMS/IP Studies in Advanced Mathematics},
   volume={46},
   publisher={American Mathematical Society},
   place={Providence, RI},
   date={2009},
   pages={i--xii and 397--805},
   isbn={978-0-8218-4837-1},
   review={\MR{2548482}},
}

\bib{FSS}{article}{
   author={Fukaya, K.},
   author={Seidel, P.},
   author={Smith, I.},
   title={The symplectic geometry of cotangent bundles from a categorical
   viewpoint},
   conference={
      title={Homological mirror symmetry},},
   book={
      series={Lecture Notes in Phys.},
      volume={757},
      publisher={Springer},
      place={Berlin},
   },
   date={2009},
   pages={1--26},
  review={\MR{2596633}},
}

\bib{kragh}{article}{
author={Kragh, Thomas},
eprint={ arXiv:0712.2533},
title={The Viterbo Transfer as a Map of Spectra },
}

\bib{massey}{book}{
   author={Massey, William S.},
   title={A basic course in algebraic topology},
   series={Graduate Texts in Mathematics},
   volume={127},
   publisher={Springer-Verlag},
   place={New York},
   date={1991},
   pages={xvi+428},
   isbn={0-387-97430-X},
   review={\MR{1095046 (92c:55001)}},
}

\bib{MS}{book}{
   author={McDuff, Dusa},
   author={Salamon, Dietmar},
   title={$J$-holomorphic curves and symplectic topology},
   series={American Mathematical Society Colloquium Publications},
   volume={52},
   publisher={American Mathematical Society},
   place={Providence, RI},
   date={2004},
   pages={xii+669},
   isbn={0-8218-3485-1},
   review={\MR{2045629 (2004m:53154)}},
}

\bib{RS}{article}{
   author={Robbin, Joel W.},
   author={Salamon, Dietmar A.},
   title={Asymptotic behaviour of holomorphic strips},
   language={English, with English and French summaries},
   journal={Ann. Inst. H. Poincar\'e Anal. Non Lin\'eaire},
   volume={18},
   date={2001},
   number={5},
   pages={573--612},
   issn={0294-1449},
   review={\MR{1849689 (2002e:53135)}},
   doi={10.1016/S0294-1449(00)00066-4},
}

\bib{seidel-book}{book}{
   author={Seidel, Paul},
   title={Fukaya categories and Picard-Lefschetz theory},
   series={Zurich Lectures in Advanced Mathematics},
   publisher={European Mathematical Society (EMS), Z\"urich},
   date={2008},
   pages={viii+326},
   isbn={978-3-03719-063-0},
   review={\MR{2441780}},
}
\comment{
\bib{seidel-les}{article}{
   author={Seidel, Paul},
   title={A long exact sequence for symplectic Floer cohomology},
   journal={Topology},
   volume={42},
   date={2003},
   number={5},
   pages={1003--1063},
   issn={0040-9383},
   review={\MR{1978046 (2004d:53105)} ,}

}}
\bib{seidel-CP2}{article}{
   author={Seidel, Paul},
 title={A remark on the symplectic cohomology of cotangent bundles, after Thomas Kragh},
status ={unpublished note},
}

\bib{WW}{article}{
   author={Wehrheim, Katrin},
   author={Woodward, Chris T.},
 title={Orientations for pseudoholomorphic quilts},
status ={manuscript},
}

\end{biblist}
\end{bibdiv}

\end{document}